\documentclass[12pt,leqno]{amsart}
\usepackage{amsfonts,amsthm,amsmath,comment,xcolor}
\usepackage{url}
\usepackage{tikz}
\usepackage{tkz-berge}
\usetikzlibrary{cd, arrows.meta}

\theoremstyle{plain}
\newtheorem{thm}{Theorem}[section]
\newtheorem{prop}[thm]{Proposition}

\newtheorem{lemma}[thm]{Lemma}
\newtheorem{cor}[thm]{Corollary}

\theoremstyle{definition}

\newtheorem{defin}[thm]{Definition}
\newtheorem{example}[thm]{Example}
\newtheorem{prob}[thm]{Problem}


\newcommand{\CC}{\mathbb{C}}





\newcommand{\note}{\vspace{2ex}\noindent{\bf Note.\quad}}

\newcommand{\bo}{\mbox{\boldmath $0$}}
\newcommand{\bu}{\mbox{\boldmath $u$}}

\newcommand{\bv}{\mbox{\boldmath $v$}}
\newcommand{\bw}{\mbox{\boldmath $w$}}

\newcommand{\bC}{\mathbb{C}}

\newcommand{\cL}{\mathcal{L}}

\newcommand{\cT}{\mathcal{T}}

\newcommand{\nmat}{\mbox{{\rm Mat}}_n(\bC)}
\newcommand{\xmat}{\mbox{{\rm Mat}}_X(\bC)}

\newcommand{\mat}{\mbox{{\rm Mat}}}

\newcommand{\spn}{\mbox{{\rm Span}}}

\newcommand{\sym}{\mbox{{\rm Sym}}}

\newfont{\bg}{cmr12 scaled\magstep4}
 
 \newcommand{\bigzerou}{\smash{\lower2ex\hbox{\bg 0}}}

\begin{document}

\title[The Terwilliger algebra of digraphs I]{The Terwilliger algebra of digraphs I ---Hamming digraph $H^*(d,3)$---}

\author[Miezaki]{Tsuyoshi Miezaki}
\address
{
	Faculty of Science and Engineering, 
	Waseda University, 
	Tokyo 169-8555, Japan\\ 
}
\email{miezaki@waseda.jp}

\author[Suzuki]{Hiroshi Suzuki}
\address
{
	College of Liberal Arts, 
	International Christian University, 
	Tokyo 181-8585, Japan\\ 
}
\email{hsuzuki@icu.ac.jp}

\author[Uchida]{Keisuke Uchida}
\address
{
	School of Fundamental Science and Engineering, 
	Waseda University, 
	Tokyo 169-8555, Japan\\ 
}
\email{u\_kei\_orange@akane.waseda.jp}

\date{}
\maketitle

\begin{abstract}
In the present paper, we define the Terwilliger algebra of digraphs.
Then, we determine the irreducible modules of the Terwilliger algebra of a Hamming digraph $H^*(d,3)$. As is well known, the representation of the Terwilliger algebra of a binary Hamming graph $H(d,2)$ is closely related to that of the Lie algebra $\mathit{sl}_2(\bC)$. We show that in the case of $H^*(d,3)$, it is related to that of the Lie algebra $\mathit{sl}_3(\bC)$.
We also identify the Terwilliger algebra of $H^*(d,3)$ as the $d$ symmetric tensor algebra of $\mat_3(\bC)$.
\end{abstract}

{\small
\noindent
{\bfseries Key Words:}
Terwilliger algebras, Hamming digraph.\\ \vspace{-0.15in}

\noindent
2020 {\it Mathematics Subject Classification}. 
Primary 05C25;
Secondary 17B10, 05E30.\\ \quad
}


\section{Introduction}

\begin{defin}
Let $N = \{a_1, a_2, \ldots, a_n\}$. The Hamming graph $H(d,n)$ or more specifically, the Hamming graph of diameter $d$ on the set $N$, $H(d, N)$ is defined by $X$, the set of vertices, and $\tilde{E}$, the set of edges.
\begin{align*}
X & = \{(x_1, \ldots, x_d)\mid \text{ for all $i$, $x_i\in N$}\},\\
\tilde{E} & = \{\{x,y\}\mid \text{exactly 1 coordinate $i$, $x_i \neq y_i$}\} \subseteq X\times X.
\end{align*}
\end{defin}

We use a trivial direction on each coordinate, $0\to 1\to 2 \to 0$ if $N = \mathbb{F}_3 = GF(3)$.

\begin{defin}
The directed graph $H^*(d,3)$ is defined by $X$, the set of vertices, and $E$, the set of arcs.
\begin{align*}
X & = \{(x_1, \ldots, x_d)\mid \text{ for all $i$, $x_i\in \mathbb{F}_3$}\},\\
E & = \{(x,y)\mid \text{exactly 1 coordinate $i$, $x_i + 1 = y_i$}\} \subseteq X\times X.
\end{align*}

Let $\Gamma = (X,E)$ be $H^*(d,3)$ and $\xmat$ the matrix algebra whose rows and columns are indexed by the elements of $X$.

The adjacency matrix of $H^*(d,3)$, $A\in \xmat$ is defined by the following.
$$(A)_{ij} = \begin{cases} 1 & \text{if there exists an arc from $i$ to $j$},\\
0 & \text{otherwise}.
\end{cases}$$
For $x$, $y$ in $X$, $\partial(x,y)$ denotes the distance between $x$ and $y$, i.e., the smallest number of arcs connecting from $x$ to $y$. We also define the two-way distance $\tilde{\partial}(x,y) = (\partial(x,y),\partial(y,x))$, and $\Delta$, the set of all two-way distances;
$$\Delta = \{\tilde{\partial}(x,y)\mid x, y\in X\}.$$
\end{defin}

We fix a base vertex $x = (0,0, \ldots, 0)\in X$.
Let $V = \mathbb{C}^{|X|}$ be the vector space over the complex number field $\bC$ whose coordinates are indexed by the elements of $X$.  For $y\in X$, $\hat{y}$ denotes the unit vector in $V$. Let
$$X_{i,j} = \{y\mid \partial(x,y)=i, \partial(y,x)=j\}.$$
If there are $s$ ones and $t$ twos, and therefore $r = d-s-t$ zeros, then $\partial(x, y) = s+2t, \partial(y,x) = 2s+t$.
If $\tilde{\partial}(x,y) = (i,j)$, then $s = (2j-i)/3$, and $t= (2i-j)/3$.

For $(i, j)\in \Delta$, $E^*_{i,j}\in \xmat$ denotes a diagonal matrix such that
$$E^*_{i,j}(z,z) = 1, \text{ if } \tilde{\partial}(x,z) = (i,j),$$
and the zero matrix of the same size if $(i,j)\not\in \Delta$.
Then,
$$E^*_{i,j}\mathbf{1} = \{\sum \hat{y} \mid y\in X, \partial(x,y) = i, \partial(y,x) = j\}.$$

We also write
$$X_{[s,t]} = \{y\mid \text{there are $s$ ones and $t$ twos}\} = X_{s+2t,2s+t}.$$
If a vector $\bv\in V$ is a linear combination of the unit vectors corresponding to the elements in $X_{[s,t]}$, i.e., if $E^*_{[s,t]}\bv = \bv$ with $r = d-s-t$, we write
$$r(\bv) = r, \; s(\bv) = s, \text{ and }t(\bv) = t.$$
Hence, if $E^*_{[s,t]}\bv = \bv$, we write $\text{type}(\bv) = (r(\bv), s(\bv), t(\bv))$, and call it the type of $\bv$.
We set
$$E^*_{i,j} = E^*_{[(2j-i)/3, (2i-j)/3]}, \text{ and } E^*_{[s,t]} = E^*_{s+2t, 2s+t}.$$ 

Let $X_{i,j} = X_{[s,t]}$, where  $t = (2i-j)/3$, and $s = (2j-i)/3$.
Then,
$$|X_{[s,t]}| = \binom{d}{s+t}\binom{s+t}{s} = \binom{d}{s}\binom{d-s}{t}.$$

\begin{defin}
The {\it Terwilliger algebra} $\cT(x)$ of $H^*(d,3)$ with respect to a base vertex $x$  is a $\bC$-algebra generated by $A$, $A^\top$ and $E^*_{i,j}$ for $(i,j)\in \Delta$, and the vector space $V = \mathbb{C}^{|X|}$ is called the {\it standard module} of $\cT(x)$. We also write $\cT(H^*(d,3))$ to specify the digraph.
\end{defin}

\begin{thm}
Let $\cT(x)$ be the Terwilliger algebra of $H^*(d,3)$ with respect to a base vertex $x$. Then,
$$\cT(x) \simeq \mathrm{Sym}^{(d)}(\mat_3(\bC)).$$
Moreover, 
$$\cT(x) \simeq \mathrm{Sym}^{(d)}(\mat_3(\bC))\simeq \bigoplus_{n\in \Lambda}\mat_n(\bC), $$
where 
\begin{align*}
\Lambda = &\left\{\frac12(d-3\ell-2m+1)(m+1)(d-3\ell-m+2)\:\right|\:\\ 
&\hspace{110pt}\left. 0\leq \ell\leq \left[\frac{d}{3}\right], 0\leq m\leq \left[\frac{d-3\ell}{2}\right]\right\}.
\end{align*}
\end{thm}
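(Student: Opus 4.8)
\medskip
\noindent\emph{Proof plan.}\quad
The plan is to transport the whole situation to a $d$-fold tensor power and then appeal to Schur--Weyl duality for $\mathfrak{gl}_3(\bC)$.

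First I would identify $V=\bC^{|X|}$ with $(\bC^{3})^{\otimes d}$ via $\hat y\mapsto\hat{y_1}\otimes\cdots\otimes\hat{y_d}$, so that $\xmat$ becomes $\mat_3(\bC)^{\otimes d}=\End\big((\bC^{3})^{\otimes d}\big)$. Write $B^{[i]}$ for the operator acting as $B\in\mat_3(\bC)$ on the $i$-th leg and as the identity on the others, and put $\Phi(B)=\sum_{i=1}^{d}B^{[i]}$. Then $A=\Phi(A_0)$ and $A^{\top}=\Phi(A_0^{\top})$, where $A_0$ is the $3\times3$ adjacency matrix of the one-coordinate digraph $0\to1\to2\to0$ (a cyclic permutation matrix); and if $e_1,e_2\in\mat_3(\bC)$ are the diagonal idempotents at the coordinates $1,2\in\FF_3$, then $\Phi(e_1)$ and $\Phi(e_2)$ are the commuting diagonalizable operators ``number of entries equal to $1$'' and ``number of entries equal to $2$'', whose common spectral projections are exactly the $E^{*}_{i,j}$. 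Hence $\langle E^{*}_{i,j}:(i,j)\in\D\rangle=\langle\Phi(e_1),\Phi(e_2)\rangle$, and $\cT(x)$ is the associative algebra generated by $\Phi(A_0),\Phi(A_0^{\top}),\Phi(e_1),\Phi(e_2)$.

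For the inclusion $\cT(x)\subseteq\mathrm{Sym}^{(d)}(\mat_3(\bC))$, note that $\mathrm{Sym}^{(d)}(\mat_3(\bC))$ --- the subalgebra of $\mat_3(\bC)^{\otimes d}$ fixed under permuting tensor legs --- is, under the identification above, precisely the commutant $\End_{S_d}\big((\bC^{3})^{\otimes d}\big)$ of the coordinate-permutation action of $S_d$; each $\Phi(B)$ is leg-symmetric and each $E^{*}_{i,j}$ commutes with coordinate permutations, so all generators lie in it. For the reverse inclusion I would use that $\Phi\colon\mat_3(\bC)\to\End\big((\bC^{3})^{\otimes d}\big)$ is a homomorphism of Lie algebras (because $[B^{[i]},C^{[j]}]=0$ for $i\ne j$ forces $[\Phi(B),\Phi(C)]=\Phi([B,C])$) --- concretely the $d$-fold coproduct action of $\mathfrak{gl}_3(\bC)$ --- and that $\{A_0,A_0^{\top},e_1,e_2\}$ generates $\mat_3(\bC)=\mathfrak{gl}_3(\bC)$ as a Lie algebra: brackets of the shift matrices with $e_1,e_2$ produce all off-diagonal matrix units, and then their commutators give the diagonal. (The traceless parts span a copy of $\mathfrak{sl}_3(\bC)$, the link with $\mathfrak{sl}_3$ announced in the abstract; $\mathfrak{gl}_3=\mathfrak{sl}_3\oplus\bC I$ with $\Phi(I)=dI$ acting as scalars.) Therefore $\cT(x)$ contains $\Phi(B)$ for every $B\in\mat_3(\bC)$, hence the entire associative algebra generated by $\Phi(\mat_3(\bC))$, i.e. the image of $U(\mathfrak{gl}_3(\bC))$ acting on $(\bC^{3})^{\otimes d}$; by one direction of Schur--Weyl duality this image is the full centralizer $\End_{S_d}\big((\bC^{3})^{\otimes d}\big)=\mathrm{Sym}^{(d)}(\mat_3(\bC))$. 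This establishes $\cT(x)\simeq\mathrm{Sym}^{(d)}(\mat_3(\bC))$.

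The remaining block decomposition is Schur--Weyl bookkeeping. One has $(\bC^{3})^{\otimes d}\cong\bigoplus_\lambda V_\lambda\otimes M_\lambda$ as a $\mathfrak{gl}_3(\bC)\times S_d$-module, $\lambda=(\lambda_1\ge\lambda_2\ge\lambda_3\ge0)$ running over partitions of $d$, with $V_\lambda$ the irreducible polynomial $\mathfrak{gl}_3(\bC)$-module of highest weight $\lambda$ and $M_\lambda$ the Specht module, so
\[
\mathrm{Sym}^{(d)}(\mat_3(\bC))=\End_{S_d}\big((\bC^{3})^{\otimes d}\big)\cong\bigoplus_\lambda\End(V_\lambda)\cong\bigoplus_\lambda\mat_{\dim V_\lambda}(\bC).
\]
Setting $\ell=\lambda_3$ and $m=\lambda_2-\lambda_3$ turns the conditions on $\lambda$ into $0\le\ell\le[d/3]$, $0\le m\le[(d-3\ell)/2]$, and the Weyl dimension formula $\dim V_\lambda=\tfrac12(\lambda_1-\lambda_2+1)(\lambda_2-\lambda_3+1)(\lambda_1-\lambda_3+2)$ into $\tfrac12(d-3\ell-2m+1)(m+1)(d-3\ell-m+2)$, which is exactly the set $\Lambda$. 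I expect the main obstacle to be the reverse inclusion above: one must check that the economical generating set of $\cT(x)$ --- two shift operators and the type-projections --- already reconstitutes the whole $\mathfrak{gl}_3$-action; once the type-projections are recognized as the joint spectral projections of $\Phi(e_1),\Phi(e_2)$ and Lie-generation of $\mathfrak{gl}_3$ is verified, the rest is standard.
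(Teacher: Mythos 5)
Your proposal is correct, but it reaches the theorem by a different route from the paper's, and it is worth recording the contrast. For the identification $\cT(x)\simeq\mathrm{Sym}^{(d)}(\mat_3(\bC))$ the paper proceeds by explicit symmetrization calculus in the spirit of Levstein--Maldonado--Penazzi: it verifies $A=L(A^{(1)})$ and $E^*_{[s,t]}=L_{r,s,t}(e_{1,1},e_{2,2},e_{3,3})$, then shows by hand that $L(M)\in\cT(x)$ for every matrix unit $M$ (via identities such as $L(e)L(M)-L(M)L(e)=L(eM-Me)$) and bootstraps to $L_j(M)$ and $M^{\otimes d}$; your version compresses all of this into the observation that $\Phi=L$ is a Lie-algebra homomorphism and that $A_0,A_0^\top,e_1,e_2$ Lie-generate $\mathfrak{gl}_3(\bC)$, after which one direction of Schur--Weyl duality identifies the associative algebra generated by $\Phi(\mat_3(\bC))$ with the full commutant $\End_{S_d}\bigl((\bC^3)^{\otimes d}\bigr)=\mathrm{Sym}^{(d)}(\mat_3(\bC))$. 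For the block decomposition the paper again works explicitly: it builds highest weight vectors $h=\mathrm{sym}(e_{1,3}^{\otimes(d-3\ell-2m)}\otimes f^{\otimes m}\otimes g^{\otimes\ell})$ for the $\mat_3(\bC)$-bimodule structure, shows each $RhR$ is a matrix block of size $d_{\ell,m}$, and closes the argument with the combinatorial identity $\tfrac14\sum d_{\ell,m}^2\cdot 4=\binom{d+8}{8}=\dim\mathrm{Sym}^{(d)}(\mat_3(\bC))$; you instead read the decomposition off from $(\bC^3)^{\otimes d}\cong\bigoplus_\lambda V_\lambda\otimes M_\lambda$ and the Weyl dimension formula, with the change of variables $\ell=\lambda_3$, $m=\lambda_2-\lambda_3$ reproducing exactly the paper's parameter ranges. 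Your route is shorter and conceptually transparent, but it outsources the exhaustion-of-blocks step to Schur--Weyl as a black box; the paper's route is self-contained, exhibits explicit generators of each simple summand, and yields the dimension identity of Lemma 5.8 as a by-product. One small point in your favour: indexing the summands by partitions $\lambda$ (rather than by the \emph{set} $\Lambda$ of dimensions) is the more faithful bookkeeping, since distinct $(\ell,m)$ can produce equal block sizes (for $d=4$ both $(0,0)$ and $(0,1)$ give $15$), so the direct sum should really run over the pairs $(\ell,m)$, a notational defect of the theorem statement rather than of either proof.
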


We also describe the irreducible $\cT(x)$ modules in the standard module. See sections 3 and 4.

This paper is organized as follows.
In Section 2, we define several matrices derived from 
$H^*(d,3)$ and establish their connection with the Lie algebra $sl_3(\CC)$. 
In Sections 3 and 4, 
we present the irreducible representations of 
the Terwilliger algebra of $\mathcal{T}(sl_3(\CC))$. 
Finally, in Section 5, we provide a proof of Theorem 1.4. 
\section{Matrices}

\subsection{Lie algebra $sl_3(\CC)$}

We define several matrices in $\cT(x)$, which play an important role in this paper. We will see that these matrices are generators of the Lie algebra $\mathit{sl}_3(\bC)$.

In the following, for matrices $M_1$ and $M_2$, $[M_1, M_2] = M_1M_2-M_2M_1$, the Lie bracket product of $M_1$ and $M_2$.

Let $r = d-s-t$ and $H_1$, $H_2$, $H_3$, $R_1$, $R_2$, $R_3$, $L_1$, $L_2$ and $L_3$ be as follows.

\begin{align*}
H_1 & = \sum_{s,t}(r-s)E^*_{[s,t]},\;
H_2 = \sum_{s,t}(s-t)E^*_{[s,t]},\;
H_3  = \sum_{s,t}(r-t)E^*_{[s,t]}, \\
R_1 & = \sum_{s,t}E^*_{[s+1,t]}AE^*_{[s,t]},\;
R_2  = \sum_{s,t}E^*_{[s-1,t+1]}AE^*_{[s,t]}, \;
R_3  = \sum_{s,t}E^*_{[s,t+1]}A^\top E^*_{[s,t]},\\
L_1 & = \sum_{s,t}E^*_{[s-1,t]}A^\top E^*_{[s,t]},\;
L_2  = \sum_{s,t}E^*_{[s+1,t-1]}A^\top E^*_{[s,t]},\;
L_3  = \sum_{s,t}E^*_{[s,t-1]}AE^*_{[s,t]}.
\end{align*}

In the following Lemmas 2.1--2.3, we show the relations among these matrices: 
\begin{lemma}
We have the following.
\begin{itemize}
\item[{\rm (i)}] $A = R_1 + R_2 + L_3$ and $A^\top = L_1 + L_2 + R_3$.
\item[{\rm (ii)}] Let $\tilde{A} = A+A^\top$. Then $\tilde{A}$ is the adjacency matrix of $H(d,3)$.
\item[{\rm (iii)}] $\tilde{R} = R_1+R_3$ is the raising operator, $\tilde{F} = R_2 + L_2$ the flat operator, and  $\tilde{L} = L_1+L_3$ the lowering operator of $H(d,3)$.
\end{itemize}
\end{lemma}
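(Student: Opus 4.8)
The plan is to track how each arc of $H^*(d,3)$ changes the type $(r(z),s(z),t(z))$ of a vertex $z$, and then read off all three parts directly from the definitions of the matrices $R_i$ and $L_i$. The key observation is that an arc increments exactly one coordinate modulo $3$, hence performs exactly one of the local moves $0\to1$, $1\to2$, $2\to0$, which act on the type by $(r,s,t)\mapsto(r-1,s+1,t)$, $(r,s,t)\mapsto(r,s-1,t+1)$, and $(r,s,t)\mapsto(r+1,s,t-1)$ respectively.

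For (i), since $\sum_{(s,t)}E^*_{[s,t]}=I$ we may write $A=\sum_{(s',t'),(s,t)}E^*_{[s',t']}AE^*_{[s,t]}$, and the three moves above show that the only nonzero summands are precisely $R_1$ (the move $0\to1$), $R_2$ (the move $1\to2$), and $L_3$ (the move $2\to0$); hence $A=R_1+R_2+L_3$. Dually, an arc counted by $A^\top$ decrements one coordinate of $z$, realising one of $1\to0$, $2\to1$, $0\to2$, which are the summands $L_1$, $L_2$, $R_3$; hence $A^\top=L_1+L_2+R_3$.

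For (ii), I would argue that if $i\neq j$ differ in more than one coordinate then $A_{ij}=A_{ji}=0$, while if they differ in the single coordinate $k$ then exactly one of $i_k+1=j_k$ and $j_k+1=i_k$ holds in $\mathbb{F}_3$ (the case $i_k=j_k$ is excluded, and $i_k+2=i_k$ is impossible), so exactly one of $A_{ij},A_{ji}$ equals $1$. Therefore $(\tilde A)_{ij}=A_{ij}+A_{ji}$ is the $0/1$ matrix recording the edge set $\tilde E$ of $H(d,3)$. For (iii), I would recall that the distance in $H(d,3)$ from the base vertex $x$ to $z$ is the number $s(z)+t(z)$ of nonzero coordinates of $z$; writing $E^*_\ell=\sum_{s+t=\ell}E^*_{[s,t]}$ for the $\ell$-th subconstituent projection, the raising, flat, and lowering operators of $H(d,3)$ are $\sum_\ell E^*_{\ell+1}\tilde A E^*_\ell$, $\sum_\ell E^*_\ell\tilde A E^*_\ell$, and $\sum_\ell E^*_{\ell-1}\tilde A E^*_\ell$. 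Classifying the six type-changing moves by their effect on $\ell=s+t$: the moves $0\to1$ ($R_1$) and $0\to2$ ($R_3$) raise $\ell$ by $1$; the moves $1\to2$ ($R_2$) and $2\to1$ ($L_2$) fix $\ell$; and the moves $2\to0$ ($L_3$) and $1\to0$ ($L_1$) lower $\ell$ by $1$. Combined with (i) and (ii) this gives $\tilde R=R_1+R_3$, $\tilde F=R_2+L_2$, $\tilde L=L_1+L_3$, and as a consistency check $\tilde A=A+A^\top=\tilde R+\tilde F+\tilde L$.

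No serious obstacle is expected here, since the content is purely combinatorial bookkeeping. The only points requiring a little care are the verification in (ii) that $A_{ij}$ and $A_{ji}$ are never simultaneously $1$ (so that $A+A^\top$ genuinely is the adjacency matrix of a graph), and keeping the labelling of the six moves consistently matched with the definitions of $R_1,R_2,R_3,L_1,L_2,L_3$.
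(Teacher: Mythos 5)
Your proof is correct, and it is essentially the argument the paper has in mind: the paper's own proof of this lemma is just the word ``Straightforward'' (with a pointer to the lecture notes for the definitions of the raising/flat/lowering operators), and your bookkeeping of the six moves $0\to1$, $1\to2$, $2\to0$, $1\to0$, $2\to1$, $0\to2$ against the definitions of $R_1,R_2,R_3,L_1,L_2,L_3$, together with the observation that exactly one of $A_{ij},A_{ji}$ is $1$ when $i,j$ differ in one coordinate, is exactly the verification being omitted.
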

\begin{proof}
Straightforward. See \cite[Chapter 30]{lecturenote} for the raising, flat, and lowering operators.
\end{proof}

\begin{lemma}
The following hold.
\begin{itemize}
\item[{\rm (i)}] $[L_1, R_1] = H_1$, $[H_1,L_1]= 2L_1$, and $[H_1,R_1] = -2R_1$.
\item[{\rm (ii)}] $[L_2, R_2] = H_2$, $[H_2,L_2]= 2L_2$, and $[H_2,R_2] = -2R_2$.
\item[{\rm (iii)}] $[L_3, R_3] = H_3$, $[H_3,L_3]= 2L_3$, and $[H_3,R_3] = -2R_3$.
\end{itemize}
\end{lemma}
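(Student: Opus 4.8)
The plan is to verify all nine relations by a direct computation of how the matrices $H_k$, $R_k$, $L_k$ act on the unit vectors $\hat y$, organized by the type decomposition $V=\bigoplus_{[s,t]}E^*_{[s,t]}V$. The starting observation is that, reading off the definitions in light of Lemma 2.1(i), the operator $R_1$ (resp.\ $L_1$) changes one coordinate of a vertex from $0$ to $1$ (resp.\ from $1$ to $0$), $R_2$ (resp.\ $L_2$) changes one coordinate from $1$ to $2$ (resp.\ from $2$ to $1$), and $R_3$ (resp.\ $L_3$) changes one coordinate from $0$ to $2$ (resp.\ from $2$ to $0$), using the appropriate one of $A$, $A^\top$. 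Consequently $H_1$, $H_2$, $H_3$ act on $E^*_{[s,t]}V$ by the scalars $r-s$, $s-t$, $r-t$ with $r=d-s-t$, while each $R_k$ and each $L_k$ shifts the type $(r,s,t)$ by $\pm 1$ in exactly two of its entries. In this language each triple $(H_k,R_k,L_k)$ is an $\mathit{sl}_2(\bC)$-triple, which is what the Lemma asserts.

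The weight relations $[H_k,L_k]=2L_k$ and $[H_k,R_k]=-2R_k$ then reduce to bookkeeping on each summand. For instance $L_1$ carries $E^*_{[s,t]}V$ into $E^*_{[s-1,t]}V$, on which $H_1$ has eigenvalue $(r+1)-(s-1)=(r-s)+2$, so $H_1L_1-L_1H_1=2L_1$ on $E^*_{[s,t]}V$; the other five identities of this shape are the same computation with $(r,s,t)$ modified in the two relevant coordinates, using $s-t$ in place of $r-s$ for $k=2$ and $r-t$ for $k=3$.

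For the identities $[L_k,R_k]=H_k$ I would fix $y\in X_{[s,t]}$ and expand $L_1R_1\hat y$ and $R_1L_1\hat y$ over single-coordinate changes. In $L_1R_1\hat y$ one first turns some coordinate $i$ with $y_i=0$ into a $1$ and then turns some coordinate $j$ with value $1$ into a $0$; the terms with $j=i$ all equal $\hat y$ and there are exactly $r$ of them, whereas the diagonal terms of $R_1L_1\hat y$ number $s$. The remaining terms of $L_1R_1\hat y$, indexed by pairs $i\neq j$ with $y_i=0$ and $y_j=1$, match term by term with the remaining terms of $R_1L_1\hat y$, because changing $y$ at position $i$ from $0$ to $1$ and at position $j$ from $1$ to $0$ yields the same vertex in either order when $i\neq j$. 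Hence $[L_1,R_1]\hat y=(r-s)\hat y$, i.e.\ $[L_1,R_1]=H_1$; the cases $k=2,3$ are identical after relabelling which pair of digits is exchanged, with diagonal counts $s$ versus $t$ for $k=2$ and $r$ versus $t$ for $k=3$.

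I expect no genuine obstacle: the argument is elementary, and the degenerate cases (such as $s=0$, where $L_1$ or $L_2$ kills the summand) are absorbed automatically because the relevant sums are empty. The one place needing a little care is the cancellation of the off-diagonal contributions in each $[L_k,R_k]$, which is precisely the assertion that edits at two distinct coordinates commute; spelling out that bijection is the heart of the proof.
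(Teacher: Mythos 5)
Your proof is correct and follows essentially the same route as the paper: a direct computation of each commutator on the unit vectors $\hat y$, organized by the type decomposition, with the eigenvalue shift giving $[H_k,L_k]=2L_k$ and the count of diagonal versus off-diagonal terms giving $[L_k,R_k]=H_k$. The only cosmetic differences are that you spell out the cancellation of the off-diagonal terms (which the paper leaves implicit in asserting $[L_1,R_1]\hat y=(r-s)\hat y$) and compute $[H_k,R_k]=-2R_k$ directly rather than by the paper's transpose argument.
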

\begin{proof}
{\rm (i)} Let $\hat{y} \in V$ such that $y = (0, \ldots, 0, 1, \ldots, 1, 2, \ldots, 2)$ with $r(y) = r, s(y) = y, t(y) = t$,  i.e., $\text{type}(\hat{y}) = (r,s,t)$.
\begin{align*}
[L_1, R_1]\hat{y} & = (L_1R_1 - R_1L_1)\hat{y}\\
& = (E^*_{[s,t]}A^\top E^*_{[s+1,t]}AE^*_{[s,t]}-E^*_{[s,t]}AE^*_{[s-1,t]}A^\top E^*_{[s,t]})\hat{y}\\
& = (r-s)\hat{y}.
\end{align*}
Hence, $[L_1, R_1] = H_1$.
\begin{align*}
[H_1,L_1]\hat{y} & = (H_1L_1-L_1H_1)\hat{y}\\
& = ((r+1)-(s-1))E^*_{[s-1,t]}A^\top E^*_{[s,t]}\hat{y} - (r-s)E^*_{[s-1,t]}A^\top E^*_{[s,t]}\hat{y}\\
& = 2L_1\hat{y}.
\end{align*}
Hence, $[H_1,L_1] = 2L_1$.
$$[H_1, R_1] = [H_1^\top, L_1^\top] \\
 = H_1^\top L_1^\top - L_1^\top H_1^\top\\
 = [L_1,H_1]^\top\\
 = -[H_1,L_1]^\top\\
 = -2R_1.$$
{\rm (ii)} and {\rm (iii)} are similar. 
\end{proof}

\begin{lemma}
In addition to the formulas in the previous lemma, the following hold.
\begin{itemize}
\item[{\rm (i)}] $H_3 = H_1 + H_2$, and $[H_i,H_j] = O$ for all $i,j\in \{1,2,3\}$.
\item[{\rm (ii)}] $[L_1, L_2] = L_3$.
\item[{\rm (iii)}] $[L_1, R_2] = O$.
\item[{\rm (iv)}] $[H_1,L_2] = -L_2$, $[H_2,L_1] = -L_1$.
\item[{\rm (v)}] $[L_1,L_3] = [L_2,L_3] = O$.
\end{itemize}
\end{lemma}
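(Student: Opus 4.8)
The plan is to verify each identity by evaluating both sides on an arbitrary standard basis vector $\hat{y}$ of $V$, in the style of the proof of Lemma 2.2; since $\cT(x)\subseteq\xmat=\End(V)$ acts faithfully on the standard module and the $\hat{y}$ span $V$, this is enough. The only bookkeeping device needed is the combinatorial reading of the generators, obtained directly from the definitions together with the convention $0\to1\to2\to0$: if $\hat{y}$ has type $(r,s,t)$, then $H_1\hat{y}=(r-s)\hat{y}$, $H_2\hat{y}=(s-t)\hat{y}$ and $H_3\hat{y}=(r-t)\hat{y}$, while $L_1\hat{y}$ is the sum, with coefficient $1$, of $\widehat{y'}$ over all $y'$ obtained from $y$ by changing one coordinate from $1$ to $0$, and similarly $L_2$ changes a coordinate from $2$ to $1$, $L_3$ changes a coordinate from $2$ to $0$, and $R_2$ changes a coordinate from $1$ to $2$. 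This expansion depends only on the type of $y$, so testing on these vectors is the same as testing the sorted representatives used in Lemma 2.2.

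Parts (i) and (iv) are then immediate. For (i): $r-t=(r-s)+(s-t)$ and the three sums run over the same index set, so $H_3=H_1+H_2$; and each $H_i$ is a linear combination of the pairwise orthogonal idempotents $E^*_{[s,t]}$, so $H_1,H_2,H_3$ pairwise commute. For (iv): $L_2$ sends a vector of type $(r,s,t)$ to one of type $(r,s+1,t-1)$, on which $H_1$ acts by $r-(s+1)$, so $H_1L_2\hat{y}=(r-s-1)L_2\hat{y}$ while $L_2H_1\hat{y}=(r-s)L_2\hat{y}$, giving $[H_1,L_2]=-L_2$; the identity $[H_2,L_1]=-L_1$ is the same computation with the scalar $s-t$ and with $L_1$ lowering $s$ by $1$.

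The substance is in (ii), (iii) and (v), which I would check by expanding each product over ordered pairs of coordinate positions. For $[L_1,L_2]$: in $L_1L_2\hat{y}$ one first raises some coordinate $k$ with $y_k=2$ from $2$ to $1$, then lowers some coordinate $l$ of value $1$ from $1$ to $0$; either $l\ne k$ (an ``old'' one), or $l=k$. The terms with $l\ne k$ correspond bijectively, with equal resulting vectors, to the terms of $L_2L_1\hat{y}$, and the terms with $l=k$ amount to changing a single coordinate from $2$ to $0$, i.e.\ to $L_3\hat{y}$; hence $[L_1,L_2]=L_3$. For $[L_1,R_2]$, $[L_1,L_3]$ and $[L_2,L_3]$ the analogous ``reused position'' terms never occur, because once a coordinate has been decremented it no longer holds the value the second operator requires; so in each case both products expand over the same set of ordered pairs of distinct positions, giving $O$. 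I do not expect a real obstacle here, only the need to be careful with the position-reuse bookkeeping — in particular, to confirm that in $L_1L_2$ the coordinate just moved from $2$ to $1$ is genuinely available to $L_1$, which is the one place where $L_1L_2$ and $L_2L_1$ differ and is exactly what produces the $L_3$ term.
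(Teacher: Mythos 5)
Your proof is correct and follows essentially the same route as the paper's: both verify each identity on standard basis vectors using the combinatorial action of the generators (which coordinate values they change), with the position-reuse term in $L_1L_2$ producing $L_3$ and the absence of such terms giving the vanishing commutators. Your write-up simply makes explicit the ordered-pair bookkeeping that the paper's one-line justifications for (ii), (iii) and (v) leave implicit.
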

\begin{proof}
{\rm (i)} is trivial. Note that $H_1$, $H_2$ and $H_3$ are diagonal matrices.

\noindent
{\rm (ii)} Let $\hat{y} \in V$ be $y = (0, \ldots, 0, 1, \ldots, 1, 2, \ldots, 2)$ with $r(y) = r, s(y) = y, t(y) = t$, i.e., $\text{type}(\hat{y}) = (r,s,t)$.
\begin{align*}
[L_1, L_2]\hat{y} &= (E^*_{[s,t-1]}A^\top E^*_{[s+1,t-1]}A^\top E^*_{[s,t]} - E^*_{[s,t-1]}A^\top E^*_{[s-1,t]}A^\top E^*_{[s,t]})\hat{y}\\
& = E^*_{[s,t-1]}AE^*_{[s,t]}\hat{y}\\
& = L_3\hat{y}.
\end{align*}

\medskip\noindent
{\rm (iii)} Since $L_1$ changes each one of ones to zero and takes the sum, 
and 
$R_2$ changes one of ones to two and takes a sum, these two operations commute.

\medskip\noindent
{\rm (iv)} Since $L_2$ changes one of two to one and takes a sum while $H_1$ takes the difference between the number of zeros and ones, such number decreases by 1 between $H_1L_2$ and $L_2H_1$. The latter is similar. 

\medskip\noindent
{\rm (v)} Since $L_1$ changes each one of ones to zero and takes the sum, and 
$L_3$ changes one of twos to zero and takes the sum, these two operations commute. The latter is similar.
\end{proof}

\begin{cor}\label{cor:sl3c}
Suppose $H_1, H_2, L_1, L_2\in \xmat$ and $H_1$, $H_2$ are diagonal matrices. If $R_1 = L_1^\top$ and $R_2 = L_2^\top$, the following hold.
\begin{itemize}
\item[{\rm (ii)}] $[L_i, R_j] = \delta_{i,j}H_i$.
\item[{\rm (iii)}] $[H_1,L_1] = 2L_1$, $[H_1,L_2] = -L_2$, $[H_2,L_1] = -L_1$, $[H_2,L_2] = 2L_2$.
\item[{\rm (iv)}] $[L_1,[L_1,L_2]] = [L_2,[L_2,L_1]] = O$.
\end{itemize}
\end{cor}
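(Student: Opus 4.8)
The plan is to treat the corollary as a bookkeeping consequence of Lemmas 2.2 and 2.3: every identity in (ii)--(iv) either appears verbatim there or follows from one of them by an elementary manipulation, and collectively they say that $H_1,H_2,L_1,L_2,R_1,R_2$ satisfy the Chevalley--Serre relations of $\mathit{sl}_3(\bC)$ for the Cartan matrix $\left(\begin{smallmatrix}2&-1\\-1&2\end{smallmatrix}\right)$. For (ii), the diagonal brackets $[L_1,R_1]=H_1$ and $[L_2,R_2]=H_2$ are Lemma 2.2(i),(ii), and $[L_1,R_2]=O$ is Lemma 2.3(iii). For (iii), $[H_1,L_1]=2L_1$ and $[H_2,L_2]=2L_2$ are Lemma 2.2(i),(ii), while $[H_1,L_2]=-L_2$ and $[H_2,L_1]=-L_1$ are Lemma 2.3(iv).

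The only bracket not already recorded in the lemmas is the remaining off-diagonal one $[L_2,R_1]=O$. Here I would exploit the transpose symmetry in the hypotheses $R_1=L_1^{\top}$, $R_2=L_2^{\top}$: transposing and using $R_1^{\top}=L_1$, $L_2^{\top}=R_2$ gives
$$[L_2,R_1]^{\top}=(L_2R_1-R_1L_2)^{\top}=R_1^{\top}L_2^{\top}-L_2^{\top}R_1^{\top}=L_1R_2-R_2L_1=[L_1,R_2],$$
which is $O$ by Lemma 2.3(iii); hence $[L_2,R_1]=O$, completing (ii).

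For (iv), I would feed Lemma 2.3(ii), namely $[L_1,L_2]=L_3$, into Lemma 2.3(v): then $[L_1,[L_1,L_2]]=[L_1,L_3]=O$, and $[L_2,[L_2,L_1]]=-[L_2,[L_1,L_2]]=-[L_2,L_3]=O$, using only bilinearity and antisymmetry of the bracket. Thus the whole statement is immediate from the two preceding lemmas together with the single transpose computation, and I do not expect a genuine obstacle — only the need to match indices carefully. I would also remark that the hypothesis that $H_1,H_2$ are diagonal is not used in deriving (ii)--(iv) as stated (it, together with the combinatorial structure of the $E^*_{[s,t]}$, is what was actually used to establish Lemmas 2.2--2.3 in the first place), so the corollary is best understood as the clean $\mathit{sl}_3(\bC)$-presentation that will be invoked later when the action of the Lie algebra is constructed.
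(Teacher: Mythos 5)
Your proposal is correct and matches the paper's (implicit) argument: the corollary is stated without proof precisely because every identity is read off from Lemmas 2.2 and 2.3, and your transpose computation supplies the one bracket, $[L_2,R_1]=O$, that the lemmas do not record verbatim.
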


The following gives a characterization of 
the Lie algebra $\mathit{sl}_3(\bC)$: 
\begin{prop}[{\cite[Proposition in page 96 and Theorem (Serre) on page 99]{jeh}}]\label{prop:sl3c}
Let $\cL$ be the Lie algebra generated by $h_1, h_2, x_1, x_2, y_1, y_2$, and let 
$$C = \begin{bmatrix} 2 & -1\\ -1 & 2\end{bmatrix}.$$ 
Suppose for all $i, j\in \{1,2\}$, the following hold.
\begin{itemize} 
\item[{\rm (i)}] $[h_i, h_j] = 0$.
\item[{\rm (ii)}] $[x_i, y_j] = \delta_{i,j}h_i$.
\item[{\rm (iii)}] $[h_i,x_j] = c_{i,j}x_j$.
\item[{\rm (iv)}] $[h_i,y_j] = -c_{j,i}y_j$.
\item[{\rm (v)}] $(ad x_i)^{-C_{j,i}+1}(x_j) = 0$, if $i\neq j$, i.e., $[x_1,[x_1,x_2]] = [x_2,[x_2,x_1]] = 0$.
\item[{\rm (vi)}] $(ad y_i)^{-C_{j,i}+1}(y_j) = 0$, if $i\neq j$, i.e., $[y_1,[y_1,y_2]] = [y_2,[y_2,y_1]] = 0$.
\end{itemize}
Then the Lie algebra $\cL = \langle h_1, h_2, x_1, x_2, y_1, y_2\rangle$ is 
isomorphic to $\mathit{sl}_3(\bC)$, and the Lie algebras $\langle h_1, x_1, y_1\rangle$, $\langle h_2, x_2, y_2\rangle$ and $\langle h_3, x_3, y_3\rangle$ are all isomorphic to $\mathit{sl}_2(\bC)$, where $h_3 = h_1 + h_2$, $x_3 = [x_1,x_2]$ and $y_3 = -[y_1,y_2]$.
\end{prop}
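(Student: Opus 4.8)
The plan is to recognize the six hypotheses as exactly Serre's relations attached to the matrix $C$, and then to invoke Serre's theorem in the form recorded in \cite{jeh}. Indeed, $C$ is the Cartan matrix of the root system of type $A_2$: with $c_{i,j}$ the $(i,j)$-entry of $C$ one has $c_{1,1}=c_{2,2}=2$, $c_{1,2}=c_{2,1}=-1$, so $1-c_{j,i}=2$ for $i\neq j$. Thus (i) is the commutativity of the Cartan part, (ii) is the normalization $[x_i,y_j]=\delta_{i,j}h_i$, (iii)--(iv) record the weights of $x_j$ and $y_j$ under $\operatorname{ad}h_i$ (the apparent asymmetry $c_{i,j}$ versus $-c_{j,i}$ being invisible here since $C$ is symmetric), and (v)--(vi) are the two Serre relations $(\operatorname{ad}x_i)^{1-c_{j,i}}x_j=0$ and $(\operatorname{ad}y_i)^{1-c_{j,i}}y_j=0$ for $i\neq j$. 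By Serre's theorem \cite[Thm., p.~99]{jeh}, the Lie algebra presented by generators $h_1,h_2,x_1,x_2,y_1,y_2$ and these relations is finite-dimensional semisimple with Cartan matrix $C$, hence isomorphic to $\mathit{sl}_3(\bC)$, and the isomorphism may be normalized to send $x_i\mapsto e_i$, $y_i\mapsto f_i$, $h_i\mapsto h_i$, where $e_i,f_i,h_i$ are Chevalley generators of $\mathit{sl}_3(\bC)$ (say $e_1=E_{12}$, $e_2=E_{23}$, $f_1=E_{21}$, $f_2=E_{32}$, $h_1=E_{11}-E_{22}$, $h_2=E_{22}-E_{33}$).

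Since $\cL$ is generated by elements satisfying (i)--(vi), there is a surjective Lie algebra homomorphism from the presented algebra, hence from $\mathit{sl}_3(\bC)$, onto $\cL$. As $\mathit{sl}_3(\bC)$ is simple, its only quotients are $0$ and itself, so provided $\cL\neq 0$ — which is the case in every application in this paper, where the generators are concrete nonzero matrices — this surjection is an isomorphism. This proves $\cL\simeq \mathit{sl}_3(\bC)$, compatibly with the normalization above.

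It remains to locate the three $\mathit{sl}_2(\bC)$ subalgebras. For $i=1,2$, the triple $(h_i,x_i,y_i)$ satisfies $[h_i,x_i]=2x_i$, $[h_i,y_i]=-2y_i$, $[x_i,y_i]=h_i$ by (iii),(iv),(ii), and under the isomorphism above it corresponds to $(h_i,e_i,f_i)$, one of the two standard simple-root $\mathit{sl}_2$'s of $\mathit{sl}_3(\bC)$; hence $\langle h_i,x_i,y_i\rangle\simeq\mathit{sl}_2(\bC)$. For the third, put $h_3=h_1+h_2$, $x_3=[x_1,x_2]$, $y_3=-[y_1,y_2]$. Under the isomorphism these map to $h_1+h_2=E_{11}-E_{33}$, $[e_1,e_2]=E_{13}$, and $-[f_1,f_2]=E_{31}$, the root vectors and coroot for the highest root $\alpha_1+\alpha_2$ of $A_2$, which visibly span a copy of $\mathit{sl}_2(\bC)$ (one checks $[E_{11}-E_{33},E_{13}]=2E_{13}$, $[E_{11}-E_{33},E_{31}]=-2E_{31}$, $[E_{13},E_{31}]=E_{11}-E_{33}$). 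Alternatively one verifies $[h_3,x_3]=2x_3$ and $[h_3,y_3]=-2y_3$ directly from (iii)--(iv) by the Jacobi identity, and $[x_3,y_3]=h_3$ from (ii) together with (v)--(vi); either way $\langle h_3,x_3,y_3\rangle\simeq\mathit{sl}_2(\bC)$.

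The substance of the argument is entirely borrowed from Serre's theorem, so the only delicate points are clerical: checking that the index conventions in (iii)--(iv) match Humphreys' normalization of Serre's relations (so the Cartan matrix produced is $C$ rather than its transpose, a moot distinction for symmetric $C$), and noting that the conclusion $\cL\simeq\mathit{sl}_3(\bC)$ tacitly requires $\cL\neq 0$, which we supply from the context in which the proposition is used.
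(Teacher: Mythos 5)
Your proposal is correct and takes essentially the same route as the paper, which offers no argument of its own but simply cites Humphreys (the Proposition on p.~96 and Serre's theorem on p.~99). The one substantive point you add beyond that citation --- that a Lie algebra merely \emph{generated by} elements satisfying Serre's relations is only a quotient of the presented algebra, so by simplicity of $\mathit{sl}_3(\bC)$ one must separately note $\cL\neq 0$ to conclude $\cL\simeq\mathit{sl}_3(\bC)$ --- is a worthwhile clarification that the paper leaves implicit.
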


Then, comparing Lemmas 2.1--2.3, Corollary 2.4, and Proposition 2.5, we obtain the following: 
\begin{prop}
The following hold.
\begin{itemize}
\item[{\rm (i)}] The Lie algebra generated by $H_1, H_2, L_1, L_2, R_1, R_2$ is isomorphic to $\mathit{sl}_3(\bC)$ with a Cartan subalgebra generated by $H_1$ and $H_2$, and a Borel subalgebra generated by $H_1, H_2, L_1, L_2, L_3$.
\item[{\rm (ii)}] The Lie algebra generated by the triple $H_1, L_1,R_1$ (resp. the triple $H_2, L_2,R_2$ and $H_3, L_3,R_3$) is isomorphic to $\mathit{sl}_2(\bC)$ with a Cartan subalgebra generated by $H_1$ (resp. $H_2$, $H_3$) and a Borel subalgebra generated by $H_1, L_1$ (resp. $H_2, L_2$, and $H_3, L_3$).
\end{itemize}
\end{prop}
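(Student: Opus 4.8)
The plan is to match the nine matrices with Chevalley-type generators of $\mathit{sl}_3(\bC)$ and then invoke the Serre presentation of Proposition~2.5. Take $C = \begin{bmatrix} 2 & -1 \\ -1 & 2 \end{bmatrix}$, the Cartan matrix of type $A_2$, and set $h_i = H_i$, $x_i = L_i$, $y_i = R_i$ for $i=1,2$. I would then verify the six families of relations (i)--(vi) of Proposition~2.5 directly from the earlier results: (i) $[H_i,H_j]=O$ is Lemma~2.3(i); (ii) $[L_i,R_j]=\delta_{i,j}H_i$ is Corollary~2.4(ii); (iii) $[H_i,L_j]=c_{i,j}L_j$ is Corollary~2.4(iii) (equivalently Lemma~2.2 together with Lemma~2.3(iv)); and (v) $[L_1,[L_1,L_2]]=[L_2,[L_2,L_1]]=O$ is Corollary~2.4(iv).

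The two remaining families, (iv) $[H_i,R_j]=-c_{j,i}R_j$ and (vi) $[R_1,[R_1,R_2]]=[R_2,[R_2,R_1]]=O$, I would obtain by transposition. Because $R_i=L_i^\top$, $H_i=H_i^\top$, and $[M,N]^\top=-[M^\top,N^\top]$, transposing relation (iii) and using that the $A_2$ Cartan matrix is symmetric (so $-c_{j,i}=-c_{i,j}$) gives (iv), and transposing the nested commutators of (v) gives (vi). With (i)--(vi) established, Proposition~2.5 furnishes a surjective Lie algebra homomorphism $\mathit{sl}_3(\bC)\to\langle H_1,H_2,L_1,L_2,R_1,R_2\rangle$; since $\mathit{sl}_3(\bC)$ is simple and the image is nonzero (e.g.\ $H_1\neq O$ as soon as $d\geq 1$), this homomorphism is an isomorphism. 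Under it the elements $h_3=h_1+h_2$, $x_3=[x_1,x_2]$, $y_3=-[y_1,y_2]$ correspond to $H_3=H_1+H_2$, $L_3=[L_1,L_2]$, $R_3=-[R_1,R_2]$ by Lemma~2.3(i),(ii) and its transpose, so the last sentence of Proposition~2.5 yields the $\mathit{sl}_2(\bC)$ assertions of part (ii) at once, the defining $\mathit{sl}_2$-relations being precisely Lemma~2.2.

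For the remaining subalgebra claims: $H_1$ and $H_2$ are commuting diagonal matrices, hence span a maximal toral subalgebra, which corresponds to the Cartan subalgebra under the isomorphism. The Borel subalgebra of $\mathit{sl}_3(\bC)$ determined by choosing $L_1,L_2$ as positive simple root vectors is $\langle H_1,H_2,L_1,L_2\rangle$, and since it contains $[L_1,L_2]=L_3$ it equals $\langle H_1,H_2,L_1,L_2,L_3\rangle$. The Cartan and Borel subalgebras of each $\mathit{sl}_2$-copy in part (ii) are then $\langle H_i\rangle$ and $\langle H_i,L_i\rangle$ respectively, with $i=3$ handled the same way.

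The step I expect to require the most care is the transposition argument deriving (iv) and (vi) from (iii) and (v): one has to track the sign in $[M,N]^\top=-[M^\top,N^\top]$ carefully through the iterated brackets, and recognize that the symmetry of the $A_2$ Cartan matrix is exactly what makes the transposed form of (iii) agree with (iv). Everything else is a direct appeal to Lemmas~2.1--2.3 and Corollary~2.4.
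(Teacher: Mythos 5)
Your proposal is correct and follows essentially the same route as the paper, which proves this proposition simply by ``comparing Lemmas 2.1--2.3, Corollary 2.4, and Proposition 2.5''; you have merely spelled out that comparison, matching $(H_i,L_i,R_i)$ to the Serre generators $(h_i,x_i,y_i)$, obtaining the relations for the $R_i$ by transposition, and using simplicity of $\mathit{sl}_3(\bC)$ to upgrade the surjection to an isomorphism. The extra care you take with the sign in $[M,N]^\top=-[M^\top,N^\top]$ and with the nonvanishing of the image is a welcome (and correct) elaboration of what the paper leaves implicit.
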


\note
The triple $H_1, L_1,R_1$ comes from the Terwilliger algebra of $H(d,\{0,1\})$, the triple $H_2, L_2,R_2$ from that of $H(d,\{1,2\})$, and the triple $H_3, L_3,R_3$ from that of $H(d,\{0,2\})$.

\subsection{Higher order relations in the Lie algebra $sl_3(\CC)$}

In this subsection, we present higher-order relations 
in the Lie algebra $\mathit{sl}_3(\bC)$. 
In the following we write $xy-yx = [x,y] = \mathit{ad}(x)(y)$.

The following is a fundamental commutation relations: 
\begin{lemma}[{\cite[Lemma in page 116]{jeh}}]\label{lemma:jeh}
Let $x$ and $y$ be elements of an associative algebra. Then 
$$[y^\ell,z] = \sum_{m=1}^\ell \binom{\ell}{m}(\mathit{ad}(y))^m(z)y^{\ell-m}.$$
\end{lemma}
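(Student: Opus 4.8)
The plan is to deduce this identity from the ordinary binomial theorem applied to a commuting pair of operators on the ambient associative algebra; here $y$ and $z$ are the two elements of the algebra (the symbol $x$ in the statement is a slip for $z$). Work inside the given associative algebra $\cA$ and let $\lambda\colon w\mapsto yw$ and $\rho\colon w\mapsto wy$ denote left and right multiplication by $y$, regarded as elements of $\End(\cA)$. These two operators commute, since $\lambda\rho(w)=ywy=\rho\lambda(w)$, and by definition $\mathit{ad}(y)=\lambda-\rho$. In particular $\rho$ and $\mathit{ad}(y)$ both lie in the commutative subalgebra of $\End(\cA)$ generated by $\lambda$ and $\rho$, so the binomial theorem is legitimately applicable to them.

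Next I would expand, using $\lambda=\rho+\mathit{ad}(y)$ and the commutativity just noted,
\[
\lambda^\ell=\bigl(\rho+\mathit{ad}(y)\bigr)^\ell=\sum_{m=0}^\ell\binom{\ell}{m}(\mathit{ad}(y))^m\rho^{\ell-m}.
\]
Evaluating both sides at $z$, and using $\lambda^\ell(z)=y^\ell z$ together with $\rho^{\ell-m}(z)=zy^{\ell-m}$, this reads $y^\ell z=\sum_{m=0}^\ell\binom{\ell}{m}(\mathit{ad}(y))^m(z)\,y^{\ell-m}$. The $m=0$ term on the right is exactly $zy^\ell$; subtracting it from both sides gives $[y^\ell,z]=y^\ell z-zy^\ell=\sum_{m=1}^\ell\binom{\ell}{m}(\mathit{ad}(y))^m(z)\,y^{\ell-m}$, which is the assertion.

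An equally short alternative, closer in spirit to the computations in the rest of Section 2, is induction on $\ell$: the base case $\ell=1$ is the definition of $\mathit{ad}(y)$, and for the step one writes $[y^{\ell+1},z]=y\,[y^\ell,z]+[y,z]\,y^\ell$, inserts the inductive formula, moves each power of $y$ to the right using $y\,(\mathit{ad}(y))^m(z)=(\mathit{ad}(y))^m(z)\,y+(\mathit{ad}(y))^{m+1}(z)$, and recombines the two resulting sums by Pascal's rule $\binom{\ell}{m}+\binom{\ell}{m-1}=\binom{\ell+1}{m}$. There is no real obstacle here — this is a classical identity, exactly the one recorded in the cited reference — and the only point needing the mildest care is the bookkeeping of binomial coefficients (in the inductive form) or the remark that $\rho$ and $\mathit{ad}(y)=\lambda-\rho$ commute so the binomial expansion is valid (in the operator form), each verified in one line. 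When the lemma is invoked later, $y$ and $z$ will be among the operators $L_i$, $R_i$, so the sum on the right is finite and in fact truncates quickly thanks to the $\mathit{sl}_3(\bC)$ relations of Lemma 2.3 and Corollary \ref{cor:sl3c}.
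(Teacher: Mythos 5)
Your proof is correct. Note that the paper itself offers no proof of this lemma at all --- it is simply quoted from Humphreys \cite[p.~116]{jeh} --- so any comparison is really with the cited source, where the identity is established by induction on $\ell$ (essentially your second, sketched argument). Your primary argument is a genuinely different and arguably cleaner route: writing $\lambda,\rho\in\End(\cA)$ for left and right multiplication by $y$, observing that associativity forces $\lambda\rho=\rho\lambda$ and that $\mathit{ad}(y)=\lambda-\rho$, and then expanding $\lambda^{\ell}=(\rho+\mathit{ad}(y))^{\ell}$ by the ordinary binomial theorem in the commutative subalgebra generated by $\lambda$ and $\rho$. The one step worth making explicit is that after expanding you must commute $(\mathit{ad}(y))^{m}$ past $\rho^{\ell-m}$ before evaluating at $z$, so that the $m$-th term reads $\rho^{\ell-m}\bigl((\mathit{ad}(y))^{m}(z)\bigr)=(\mathit{ad}(y))^{m}(z)\,y^{\ell-m}$ rather than $(\mathit{ad}(y))^{m}(zy^{\ell-m})$; this is licensed by the commutativity you already noted, and you do invoke it, so there is no gap. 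You are also right that the ``$x$'' in the statement is a slip for ``$z$''. What the operator argument buys is a one-line conceptual proof with no binomial-coefficient bookkeeping; what the inductive argument buys is that it matches the cited reference and stays entirely inside elementary commutator manipulations of the kind used throughout Section~2. Either would serve if the authors wished to make the paper self-contained.
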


Using Lemma \ref{lemma:jeh}, we have the following: 
\begin{lemma}\label{lemma:commutator-formula}
Let $i, j, k$ be positive integers. Then, the following hold.
\begin{itemize}
\item[{\rm (i)}] $(\mathit{ad}(R_2))^k(R_1^i) =  i\cdot (i-1)\cdots (i-k+1)R^k_3R_1^{i-k}$.
\item[{\rm (ii)}] $(\mathit{ad}(R_1))^k(R_2^j) =  (-1)^kj\cdot (j-1)\cdots (j-k+1)R^k_3R_2^{j-k}$.
\item[{\rm (iii)}] ${\displaystyle [R_2^j,R_1^i] = \sum_{k=1}^j k! \binom{i}{k}\binom{j}{k}R^k_3R^{i-k}_1R^{j-k}_2}$.
\item[{\rm (iv)}] ${\displaystyle [R_1^i,R_2^j] = \sum_{k=1}^i (-1)^kk!\binom{i}{k}\binom{j}{k}R_3^kR_2^{j-k}R_1^{i-k}}$.
\end{itemize}
\end{lemma}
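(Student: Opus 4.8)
The plan is to first isolate the commutation relations satisfied by $R_1,R_2,R_3$ among themselves, and then obtain everything by $\mathit{ad}$-derivation bookkeeping together with Lemma~\ref{lemma:jeh}. The first step is to observe that $R_i = L_i^\top$ for $i=1,2,3$ directly from the definitions (this identification is already used implicitly in the proof of Lemma~2.2), so that transposing the identities $[L_1,L_2] = L_3$ and $[L_1,L_3] = [L_2,L_3] = O$ of Lemma~2.3(ii),(v) yields
\[
[R_2,R_1] = R_3, \qquad [R_1,R_3] = [R_2,R_3] = O;
\]
equivalently $[R_1,R_2] = -R_3$, with $R_3$ commuting with $R_1$, $R_2$, and all their powers. (Alternatively these three relations can be read off directly from the combinatorial descriptions ``$R_1$ turns a $0$ into a $1$'', ``$R_2$ turns a $1$ into a $2$'', ``$R_3$ turns a $0$ into a $2$'', exactly in the style of the proof of Lemma~2.3.)

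For (i) and (ii) I would use that $\mathit{ad}(R_2)$ and $\mathit{ad}(R_1)$ are derivations of the associative matrix algebra $\xmat$. The Leibniz rule together with the relations above gives the base cases
\[
\mathit{ad}(R_2)(R_1^i) = \sum_{a=0}^{i-1} R_1^a[R_2,R_1]R_1^{i-1-a} = i\,R_3R_1^{i-1}, \qquad \mathit{ad}(R_1)(R_2^j) = -\,j\,R_3R_2^{j-1},
\]
the centrality of $R_3$ being used to collapse the sums. An induction on $k$ then finishes both: applying $\mathit{ad}(R_2)$ to $i(i-1)\cdots(i-k+1)R_3^kR_1^{i-k}$ uses $[R_2,R_3]=O$ and the base case on $R_1^{i-k}$ to produce the extra factor $(i-k)$, which is exactly the recursion for the falling factorial; part (ii) is the same computation with $R_1$ and $R_2$ interchanged, the relation $[R_1,R_2] = -R_3$ contributing one sign at each step and hence the overall $(-1)^k$.

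For (iii) and (iv) I would feed (i) and (ii) into Lemma~\ref{lemma:jeh}. Taking $y = R_2$, $\ell = j$, $z = R_1^i$ there gives
\[
[R_2^j,R_1^i] = \sum_{m=1}^{j}\binom{j}{m}\bigl(\mathit{ad}(R_2)\bigr)^m(R_1^i)\,R_2^{j-m},
\]
and substituting (i) together with $i(i-1)\cdots(i-m+1) = m!\binom{i}{m}$ (the terms with $m>i$ vanishing automatically since then $\binom{i}{m}=0$) yields the claimed formula after renaming $m\to k$; taking $y = R_1$, $\ell = i$, $z = R_2^j$ and substituting (ii) in the same way gives (iv).

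I do not expect a genuine obstacle: the whole lemma rests on the three relations of the first step plus Lemma~\ref{lemma:jeh}. The only points needing care are keeping the sign in $[R_1,R_2] = -R_3$ straight, so that (ii) and (iv) carry $(-1)^k$ while (i) and (iii) do not, and the ordering of the almost-commuting factors $R_3^k$, $R_1^{i-k}$, $R_2^{j-k}$ --- but since $R_3$ is central relative to $R_1$ and $R_2$, the only ordering that matters is the relative position of the $R_1$- and $R_2$-powers, which is dictated by the shape of Lemma~\ref{lemma:jeh}.
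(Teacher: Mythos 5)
Your proposal is correct and follows essentially the same route as the paper: establish $[R_1,R_2]=-R_3$ with $R_3$ commuting with $R_1,R_2$, prove (i) and (ii) by induction on $k$ (your Leibniz-rule base case is interchangeable with the paper's one-line application of Lemma~\ref{lemma:jeh}), and then obtain (iii) and (iv) by substituting into Lemma~\ref{lemma:jeh}. The only difference is cosmetic: you derive the starting relations by transposing Lemma~2.3, where the paper simply asserts them.
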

\begin{proof}
Observe that $[R_1,R_2] = -R_3$ and $[R_1,[R_1,R_2]] = [R_1, -R_3] = O = [R_2,R_3]$.

(i) For $k = 1$, we apply the previous lemma. Then, 
$$\mathit{ad}(R_2)(R_1^i) = -[R_1^i, R_2] = iR_3R_1^{i-1},$$
and the formula holds in this case.

Now, using the induction hypothesis, we have
\begin{align*}
(\mathit{ad}(R_2))^k(R_1^i) & = \mathit{ad}(R_2)(i\cdot (i-1)\cdots (i-k+2)R^{k-1}_3R_1^{i-k+1})\\
& = i\cdot (i-1)\cdots (i-k+2)R^{k-1}_3\mathit{ad}(R_2)(R_1^{i-k+1})\\
& = i\cdot (i-1)\cdots (i-k+1)R^k_3R_1^{i-k}.
\end{align*}

(ii) Similarly, for $k = 1$, we apply the previous lemma. Then, 
$$\mathit{ad}(R_1)(R_2^j) = -[R_2^j, R_1] = -jR_3R_2^{j-1},$$
and the formula holds in this case.

Now, using the induction hypothesis, we have
\begin{align*}
(\mathit{ad}(R_1))^k(R_2^j) & = (-1)^{k-1}\mathit{ad}(R_1)(j\cdot (j-1)\cdots (j-k+2)R^{k-1}_3R_2^{j-k+1})\\
& = (-1)^{k-1}j\cdot (j-1)\cdots (j-k+2)R_3^{k-1}\mathit{ad}(R_1)(R_2^{j-k+1})\\
& = (-1)^{k}j\cdot (j-1)\cdots (j-k+1)R^k_3R_2^{j-k}.
\end{align*}

{\rm (iii)} First, we apply the previous lemma and then {\rm (i)}.
\begin{align*}
[R_2^j,R_1^i] &= \sum_{k=1}^j \binom{j}{k}(\mathit{ad}(R_2))^k(R_1^i)R^{j-k}_2\\
& = \sum_{k=1}^j i\cdot (i-1)\cdots (i-k+1)\binom{j}{k}R^k_3R^{i-k}_1R^{j-k}_2\\
& = \sum_{k=1}^j k! \binom{i}{k}\binom{j}{k}R^k_3R^{i-k}_1R^{j-k}_2.
\end{align*}

We have the formula.

{\rm (iv)} Similarly, first, we apply the previous lemma and then {\rm (ii)}.
\begin{align*}
[R_1^i,R_2^j] &= \sum_{k=1}^i \binom{i}{k}(\mathit{ad}(R_1))^k(R_2^j)R^{i-k}_1\\
& = \sum_{k=1}^i (-1)^{k}j\cdot (j-1)\cdots (j-k+1)\binom{i}{k}R^k_3R^{j-k}_2R^{i-k}_1\\
& = \sum_{k=1}^i (-1)^kk!\binom{i}{k}\binom{j}{k}R_3^kR_2^{j-k}R_1^{i-k}.
\end{align*}

We have the formula.
\end{proof}

The following will be used in Section 4: 
\begin{lemma}\label{lemma:commutators}
Let $V$ be the standard module and suppose $\bv\neq \bo$ satisfies $L_1\bv = L_2\bv = \bo$, $H_1\bv = m_1\bv$ and $H_2\bv = m_2\bv$. Then, the following hold.
\begin{itemize}
\item[{\rm (i)}] $L_3R_1^iR_2^j \bv  = -ij(m_2 - j+1)R_1^{i-1}R_2^{j-1}\bv \in \spn(R_1^{i-1}R_2^{j-1}\bv)$ for every integers $i$ and $j$.
\item[{\rm (ii)}] $L_3R_2^jR_1^i \bv  =  ij( m_1 -i+1)R_2^{j-1}R_1^{i-1}\bv\in \spn(R_2^{j-1}R_1^{i-1}\bv)$ for every integers $i$ and $j$.
\end{itemize}
\end{lemma}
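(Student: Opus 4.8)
The plan is to prove both identities by computing the action of $L_3$ on the relevant vectors using the commutation relations established so far, together with the observation that $L_3$ annihilates the highest-weight vector $\bv$. Note first that $L_3\bv = \bo$: since $L_3 = [L_1,L_2]$ by Lemma 2.3(ii), we have $L_3\bv = L_1L_2\bv - L_2L_1\bv = \bo$ because $L_1\bv = L_2\bv = \bo$. Also, from the $sl_3$-relations we will need $[L_3, R_1] = ?$ and $[L_3, R_2] = ?$; these follow from the structure of the root system. In the $sl_3$ labeling of Proposition 2.5, $R_1, R_2$ correspond to the two simple raising operators and $R_3 = [R_1,R_2]$ (up to sign, here $[R_1,R_2]=-R_3$ as noted in Lemma 2.8), while $L_1, L_2, L_3$ are the lowering operators. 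The key brackets are $[L_3,R_1] = \pm L_2$ (connecting the $R_3$-root with the $R_1$-root) and $[L_3,R_2] = \pm L_1$, with the signs determined by the conventions; one reads these off from $H_3 = H_1+H_2$ and the fact that the $(H_3,L_3,R_3)$ triple is an $sl_2$.

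For part (i), I would move $L_3$ to the right through $R_1^i$ and then through $R_2^j$, picking up commutator terms, until it hits $\bv$ and dies. Using Lemma 2.7 (the $[y^\ell, z]$ formula) twice — once for $[L_3, R_1^i]$ and once for $[L_3, R_2^j]$ — and the fact that $[L_3,R_1]$ is (up to sign) $L_2$ while $[L_3, L_2] = O$ and $[L_2, R_1] = O$ (Corollary 2.4), only the single-commutator term survives in the first step. Concretely $[L_3, R_1^i] = \sum_m \binom{i}{m}(\mathrm{ad}\,L_3)^m(R_1)R_1^{i-m}$, and since $(\mathrm{ad}\,L_3)^2(R_1) = 0$ (because $(\mathrm{ad}\,L_3)(R_1) \in \spn(L_2)$ and $[L_3,L_2]=O$), this truncates to $i\cdot(\mathrm{ad}\,L_3)(R_1)R_1^{i-1}$. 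So $L_3 R_1^i R_2^j \bv = R_1^i L_3 R_2^j\bv + i(\mathrm{ad}\,L_3)(R_1)R_1^{i-1}R_2^j\bv$. The first summand is handled by pushing $L_3$ through $R_2^j$ the same way, using $[L_3,R_2]=\pm L_1$ and $L_1\bv=\bo$: it yields a term proportional to $R_1^i R_1^{??}\cdots$ — I will need to check it contributes a term in $\spn(R_1^{i-1}R_2^{j-1}\bv)$ after further reduction, or vanishes. The second summand becomes (up to sign) $i L_2 R_1^{i-1}R_2^j\bv$, and since $[L_2,R_1]=O$ this is $i R_1^{i-1} L_2 R_2^j\bv$; now $L_2$ on $R_2^j\bv$ is a standard $sl_2$ computation (the triple $H_2,L_2,R_2$), giving $L_2 R_2^j\bv = j(m_2-j+1)R_2^{j-1}\bv$ by the weight $m_2 = H_2\bv$-eigenvalue. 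Collecting, one obtains the coefficient $-ij(m_2-j+1)$ as claimed (the overall minus sign coming from $[L_3,R_1] = -L_2$ in the operative convention, which I would fix explicitly at the start).

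Part (ii) is the mirror image: push $L_3$ rightward through $R_2^j$ first, using $[L_3,R_2]$ proportional to $L_1$ and $[L_3,L_1]=O$, $[L_1,R_2]=O$ (Lemma 2.3(iii)), so again the expansion truncates after the single commutator; then the residual $L_3 R_1^i\bv$ term is pushed through $R_1^i$ using $[L_3,R_1]\propto L_2$ and $L_2\bv=\bo$, and one is left with $j L_1 R_2^{j-1}R_1^i\bv = j R_2^{j-1}(L_1 R_1^i)\bv = j\cdot i(m_1-i+1)R_2^{j-1}R_1^{i-1}\bv$ via the $sl_2$ computation for the triple $H_1,L_1,R_1$ with weight $m_1 = H_1\bv$-eigenvalue. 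This gives the coefficient $ij(m_1-i+1)$ with the sign now positive, the asymmetry between (i) and (ii) tracing back to the opposite signs of $[L_3,R_1]$ versus $[L_3,R_2]$ relative to the ordering of factors.

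The main obstacle I anticipate is bookkeeping of signs and the precise brackets $[L_3,R_1]$, $[L_3,R_2]$, which are not listed verbatim in Lemmas 2.1--2.3 and must be derived — either directly from the definitions of these operators as "change a symbol and sum" maps (the cleanest route: $L_3$ turns a two into a zero, $R_1$ turns a zero into a one, and one checks on a basis vector $\hat y$ how $L_3R_1 - R_1L_3$ acts, analogous to the proofs of Lemma 2.2), or from the $sl_3$ root-system structure guaranteed by Proposition 2.5. A secondary check is making sure the "residual" term $R_1^i L_3 R_2^j\bv$ in part (i) (and its analogue in (ii)) either vanishes or collapses into the claimed span after one more reduction; I expect it vanishes because after moving $L_3$ to $\bv$ one is left with $L_1$ acting on $\bv$, but this should be verified carefully rather than asserted. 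Everything else is routine $sl_2$-representation theory applied inside the standard module.
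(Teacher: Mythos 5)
Your proposal is correct and follows essentially the same route as the paper: expand $[L_3,R_1^i]$ and $[L_3,R_2^j]$ via the ad-formula of Lemma 2.7, observe that the higher commutators vanish so only the single-commutator terms $-iR_1^{i-1}L_2$ and $jR_2^{j-1}L_1$ survive, kill the residual term using $L_3R_2^j\bv=\bo$ (resp.\ $L_3R_1^i\bv=\bo$), and finish with the standard $\mathit{sl}_2$ identity for $L_2R_2^j\bv$ (resp.\ $L_1R_1^i\bv$), which the paper rederives explicitly from Lemma 2.7 rather than quoting. The two points you flag as needing verification --- the signs $[L_3,R_1]=-L_2$, $[L_3,R_2]=L_1$ and the vanishing of the residual term --- do check out exactly as you anticipate.
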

\begin{proof}
We first treat the case when $i=0$ or $j = 0$. $L_2R_1^i \bv = L_1R_2^j \bv = \bo$ is clear as $[L_2,R_1] = [L_1,R_2] = O$ and $L_2\bv = L_1\bv =\bo$. $L_3\bv =\bo$ as $[L_1,L_2] = L_3$.

Observe that  $[R_1,L_3] = L_2$, $[R_1,[R_1,L_3]] = O$, $[R_2, L_3] = -L_1$, $[R_2, [R_2,L_3]] = O$ and $L_2R_1^i\bv = L_1R_2^j \bv = \bo$ for every nonnegative integers $i$ and $j$, by Lemma~\ref{lemma:jeh} we have
$[L_3, R_1^i] = -i L_2R_1^{i-1} = -i R_1^{i-1}L_2$ and $[L_3, R_2^j] = jL_1R_2^{j-1} = j R_2^{j-1}L_1$. Hence, 
$$L_3R_1^i\bv = R_1^iL_3\bv + [L_3,R_1^i]\bv = \bo -i L_2R_1^{i-1}\bv = -iR_1^{i-1}L_2 \bv = \bo,$$
and
$$L_3R_2^j\bv = R_2^jL_3\bv + [L_3,R_2^j]\bv = \bo + j L_1R_2^{j-1}\bv = jR_2^{j-1}L_1 \bv = \bo.$$

We now assume that both $i$ and $j$ are positive.

(i) Since $[R_1, L_3] = L_2$, $[R_1, [R_1,L_3]] = O$ and $[L_1, R_2] = O$, by Lemma~\ref{lemma:jeh} we have
$[L_3, R_1^i] = -iL_2R_1^{i-1} = -i R_1^{i-1}L_2$.

Recalling that $[R_2,L_2] = -H_2$, $[R_2,-H_2] = -2R_2$ and $[R_2,[R_2,-H_2]] = O$, by Lemma~\ref{lemma:jeh}  we have
$[R_2^j, H_2] = j[R_2,H_2]R_2^{j-1} = 2jR_2^j$,  and hence,
\begin{align*}
[R_2^j, L_2] & = j[R_2,L_2]R_2^{j-1} + \binom{j}{2}[R_2,[R_2,L_2]]R_2^{j-2} \\
& = -jH_2R_2^{j-1} - j(j-1)R_2^{j-1}\\
& = -j R_2^{j-1}H_2 +2j(j-1)R_2^{j-1} - j(j-1)R_2^{j-1}\\
& = -j R_2^{j-1}H_2 + j(j-1)R_2^{j-1},
\end{align*}
we have by (i),
\begin{align*}
L_3R_1^iR_2^j \bv & = R_1^iL_3R_2^j \bv + [L_3,R_1^i]R_2^j\bv\\
& = \bo  -i R_1^{i-1}L_2R_2^j\bv\\
& = -i R_1^{i-1}R_2^jL_2\bv + i R_1^{i-1}[R_2^j,L_2]\bv \\
& = \bo - iR_1^{i-1} (-j R_2^{j-1}H_2 + j(j-1)R_2^{j-1})\bv\\
& = ij(m_2 - j+1)R_1^{i-1}R_2^{j-1}\bv,
\end{align*}
as desired.

(ii) Since $[R_2, L_3] = -L_1$, $[R_2, [R_2,L_3]] = O$ and $[L_1, R_2] = O$, by Lemma~\ref{lemma:jeh} we have
$[L_3, R_2^j] = jL_1R_2^{j-1} = j R_2^{j-1}L_1$.

Recalling that $[R_1,L_1] = -H_1$, $[R_1,-H_1] = -2R_1$ and $[R_1,[R_1,-H_1]] = O$, by Lemma~\ref{lemma:jeh} we have
$[R_1^i, H_1] = i[R_1,H_1]R_1^{i-1} = 2iR_1^i$,  and hence,
\begin{align*}
[R_1^i, L_1] & = i[R_1,L_1]R_1^{i-1} + \binom{i}{2}[R_1,[R_1,L_1]]R_2^{i-2} \\
& = -iH_1R_1^{i-1} - i(i-1)R_1^{i-1}\\
& = -i R_1^{i-1}H_1 +2i(i-1)R_1^{i-1} - i(i-1)R_1^{i-1}\\
& = -i R_1^{i-1}H_1 + i(i-1)R_1^{i-1},
\end{align*}
we have by (i),
\begin{align*}
L_3R_2^jR_1^i \bv & = R_2^jL_3R_1^i \bv + [L_3,R_2^j]R_1^i\bv\\
& = \bo  +j R_2^{j-1}L_1R_1^i\bv\\
& = j R_2^{j-1}R_1^iL_1\bv - j R_2^{j-1}[R_1^i,L_1]\bv \\
& = \bo + jR_2^{j-1} (-i R_1^{i-1}H_1 + i(i-1)R_1^{i-1})\bv\\
& = -ij( m_1 -i+1)R_2^{j-1}R_1^{i-1}\bv,
\end{align*}
as desired.

\end{proof}

\section{Algebras and Modules}

In this section, we give some properties of a highest weight vector and module. Firstly, we give an economical generator of $\mathcal{T}$. 
\begin{prop}
The following hold.
$\bC[A] = \bC[A^\top] = \bC[A, A^\top]$ is the Bose-Mesner algebra of $H^*(d,3)$.
\end{prop}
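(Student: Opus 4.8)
\smallskip

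\noindent\textbf{Proof proposal.} The plan is to identify the Bose--Mesner algebra $\cM$ of $H^*(d,3)$ --- the span of the two-way distance matrices $A_{i,j}$, $(i,j)\in\Delta$ --- concretely, and then to check that $A$ alone already has as many distinct eigenvalues as $\dim\cM$. First I would exploit that $X=\FF_3^d$ is an abelian group and that $\tilde\partial(x,y)$ depends only on the type $(s,t)$ of $y-x$ (the numbers of coordinates in which $y-x$ equals $1$, resp.\ $2$): each $A_{i,j}$ equals $A_{[s,t]}=\sum_{\mathrm{type}(g)=(s,t)}T_g$, where $T_g$ is the translation matrix $(T_g)_{x,y}=\delta_{y,x+g}$. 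Since the $T_g$ form a group isomorphic to $\FF_3^d$, one has $\spn\{T_g:g\in\FF_3^d\}=\bC[\FF_3^d]$, and $\cM$ is precisely the subspace of elements $\sum_g c_gT_g$ whose coefficient function $c$ is invariant under the coordinate-permutation action of $S_d$; as $S_d$ acts by algebra automorphisms, $\cM=\bC[\FF_3^d]^{S_d}$ is a subalgebra, closed under conjugate transpose (which sends $A_{[s,t]}\mapsto A_{[t,s]}$). The $A_{[s,t]}$ have pairwise disjoint supports, hence are linearly independent, so $\dim\cM=\#\{(s,t):s,t\ge0,\ s+t\le d\}=\binom{d+2}{2}$.

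Next, since $A=A_{[1,0]}\in\cM$ and $A^\top=A_{[0,1]}\in\cM$, we get $\bC[A]\subseteq\bC[A,A^\top]\subseteq\cM$ and $\bC[A^\top]\subseteq\cM$, so it would remain only to show $\dim\bC[A]\ge\binom{d+2}{2}$ and likewise for $A^\top$ --- equivalently, that each of $A$, $A^\top$ has at least $\binom{d+2}{2}$ distinct eigenvalues. I would diagonalise over $\bC[\FF_3^d]$ using the characters $\chi_a$ ($a\in\FF_3^d$, $\omega=e^{2\pi i/3}$): the eigenvalue of $A=\sum_{i=1}^dT_{-e_i}$ on $\chi_a$ is $\sum_{i=1}^d\omega^{a_i}$, and if $a$ has $p$ coordinates equal to $1$ and $q$ equal to $2$, this is
$$(d-p-q)+p\,\omega+q\,\omega^{2}=\Bigl(d-\tfrac{3(p+q)}{2}\Bigr)+\tfrac{\sqrt3}{2}(p-q)\,i .$$
The real part recovers $p+q$ and the imaginary part recovers $p-q$, hence the pair $(p,q)$; so these $\binom{d+2}{2}$ numbers are pairwise distinct, giving $\dim\bC[A]\ge\binom{d+2}{2}$ and hence $\bC[A]=\cM$. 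Replacing $\omega$ by $\overline\omega$ yields the (still pairwise distinct) eigenvalues of $A^\top$, so $\bC[A^\top]=\cM$, and then $\bC[A,A^\top]=\cM$ follows from the inclusions above.

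I expect the only real obstacle to be the first paragraph: verifying that the two-way distance partition of $X\times X$ is closed under composition --- equivalently that its relation matrices span an algebra --- and identifying $\dim\cM=\binom{d+2}{2}$. The abelian-group / Cayley-graph description makes this routine, but it is where the substance lies; what follows is a one-line eigenvalue computation. (An alternative avoiding characters would be to prove by induction on $k$ that $A^k$ is a combination of the $A_{[s,t]}$ with $s+2t\le k$, ``triangular'' with respect to the type ordering, so that suitable linear combinations of $I,A,A^2,\dots$ recover every $A_{[s,t]}$; but the character computation is shorter and handles $A^\top$ for free.)
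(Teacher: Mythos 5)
Your proposal is correct, and it takes a genuinely more explicit route than the paper. The paper dispatches the three equalities in one line: $A$ and $A^\top$ commute and are normal, hence simultaneously diagonalizable, and for a real normal matrix $M$ the transpose $M^\top=M^*$ is a polynomial in $M$ (Lagrange interpolation applied to $\lambda\mapsto\overline{\lambda}$ on the spectrum), so $\bC[A]$, $\bC[A^\top]$ and $\bC[A,A^\top]$ all coincide; the identification of this common algebra with the Bose--Mesner algebra is left essentially implicit. You instead compute everything from the translation-scheme structure: you realize the two-way distance matrices as the $S_d$-orbit sums $A_{[s,t]}$ inside $\bC[\FF_3^d]$, so that the Bose--Mesner algebra is the invariant subalgebra $\bC[\FF_3^d]^{S_d}$ of dimension $\binom{d+2}{2}$, and then show that $A$ already has $\binom{d+2}{2}$ distinct eigenvalues, which forces $\bC[A]=\cM$ and gives the statement about $A^\top$ by conjugating. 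This is longer, but it proves strictly more than the paper's argument does on its face: it verifies that the common algebra really is the span of the relation matrices (equivalently, that the two-way distance partition is an association scheme), and it produces the eigenvalue table of the scheme as a byproduct. The only blemish is a harmless sign inconsistency: with your convention $(T_g)_{x,y}=\delta_{y,x+g}$ one has $A=\sum_i T_{e_i}$ rather than $\sum_i T_{-e_i}$; this merely interchanges $\omega$ and $\omega^2$ (i.e.\ $p$ and $q$) in the eigenvalue formula and does not affect the distinctness argument.
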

\begin{proof}
Since the matrices $A$ and $A^\top$ are commuting normal matrices, they are simultaneously diagonalizable, having the same eigenvalues with multiplicities. Hence, we have the assertions.
\end{proof}

Then we have the following: 
\begin{prop}
The following hold.
\begin{itemize}
\item[{\rm (i)}] $\cT = \cT(x) = \langle A, E^*_{i,j} \mid (i,j)\in \Delta\rangle$.
\item[{\rm (ii)}] $\cT(x) = \langle L_1, L_2, R_1, R_2\rangle$.
\item[{\rm (iii)}] Let $\cL$ be the Lie algebra generated by $L_1, L_2, R_1, R_2$. Then $\cL\simeq \mathit{sl}_3(\bC)$, and $\cL$ submodules of $V$ are $\cT$ submodules and vice versa. Moreover, an $\cL$-submodule $W$ of $V$ is irreducible if and only if a $\cT$-module $W$ of $V$ is irreducible.
\end{itemize}
\end{prop}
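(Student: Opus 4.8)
The plan is to handle the three parts in sequence, reducing each to results already established.

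For part (i) I would simply invoke Proposition~3.2, which gives $A^\top\in\bC[A]$; since $\bC[A]\subseteq\langle A,E^*_{i,j}\mid(i,j)\in\Delta\rangle$, the generator $A^\top$ is redundant, so $\cT=\langle A,A^\top,E^*_{i,j}\rangle=\langle A,E^*_{i,j}\rangle$. There is essentially nothing further to do.

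For part (ii), one inclusion is immediate: by their defining formulas each of $L_1,L_2,R_1,R_2$ is a product of $A$ or $A^\top$ with the diagonal matrices $E^*_{[s,t]}$, hence lies in $\cT$, so $\langle L_1,L_2,R_1,R_2\rangle\subseteq\cT$. For the reverse inclusion I would first recover the remaining matrices of Section~2 inside $\langle L_1,L_2,R_1,R_2\rangle$: $H_1=[L_1,R_1]$ and $H_2=[L_2,R_2]$ by Lemma~2.2, $L_3=[L_1,L_2]$ by Lemma~2.3(ii), and $R_3=[R_2,R_1]$ using the relation $[R_1,R_2]=-R_3$ noted in the proof of Lemma~\ref{lemma:commutator-formula}. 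Then Lemma~2.1(i) gives $A=R_1+R_2+L_3$ and $A^\top=L_1+L_2+R_3$, so $A,A^\top\in\langle L_1,L_2,R_1,R_2\rangle$. The one point that needs a moment's care is producing the $E^*_{i,j}$. Here $H_1=\sum_{s,t}(r-s)E^*_{[s,t]}$ and $H_2=\sum_{s,t}(s-t)E^*_{[s,t]}$ (with $r=d-s-t$) are commuting diagonal matrices whose common eigenspaces are exactly the coordinate subspaces $\spn\{\hat{y}\mid y\in X_{[s,t]}\}$, and the eigenvalue pair $(r-s,\,s-t)$ determines $(s,t)$: from $(r-s,s-t)=(r'-s',s'-t')$ one gets $2s+t=2s'+t'$ together with $s-t=s'-t'$, which force $s=s'$ and $t=t'$. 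Hence each $E^*_{[s,t]}$ is the spectral idempotent of the pair $(H_1,H_2)$ for that eigenvalue, so $E^*_{[s,t]}\in\bC[H_1,H_2]\subseteq\langle L_1,L_2,R_1,R_2\rangle$ (and $E^*_{i,j}=O$ when $(i,j)\notin\Delta$). Combined with part (i), this yields $\cT=\langle A,E^*_{i,j}\rangle\subseteq\langle L_1,L_2,R_1,R_2\rangle$, hence equality.

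For part (iii), the isomorphism $\cL\simeq\mathit{sl}_3(\bC)$ is exactly Proposition~2.6(i). For the comparison of submodules I would argue that a subspace $W\subseteq V$ is a $\cT$-submodule if and only if it is invariant under the four operators $L_1,L_2,R_1,R_2$: these lie in $\cT$, and by (ii) they generate $\cT$ as an algebra. Likewise $W$ is an $\cL$-submodule if and only if it is stable under every element of $\cL$; but $\cL$ is generated as a Lie algebra by $L_1,L_2,R_1,R_2$, and invariance under operators $X$ and $Y$ forces invariance under $[X,Y]=XY-YX$, so this is again equivalent to invariance under $L_1,L_2,R_1,R_2$. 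Therefore the lattice of $\cT$-submodules of $V$ and the lattice of $\cL$-submodules of $V$ coincide, and in particular a submodule $W$ of $V$ is $\cT$-irreducible if and only if it is $\cL$-irreducible. I do not anticipate a genuine obstacle: the proposition is really a matter of assembling the identities of Section~2 in the right order, and the only computational point is the separation of the joint $(H_1,H_2)$-spectrum used in part (ii), which is the two-line check above.
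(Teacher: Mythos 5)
Your proof is correct and follows essentially the same route as the paper: both inclusions for (ii) via the commutator identities $H_1=[L_1,R_1]$, $H_2=[L_2,R_2]$, $L_3=[L_1,L_2]$, $R_3=\pm[R_1,R_2]$ and the decompositions $A=R_1+R_2+L_3$, $A^\top=L_1+L_2+R_3$, then (iii) from the $\mathit{sl}_3$ relations. You actually supply a detail the paper only asserts, namely that each $E^*_{[s,t]}$ is a joint spectral idempotent of $(H_1,H_2)$ because the eigenvalue pair $(r-s,s-t)$ determines $(s,t)$; your two-line verification of this is correct.
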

\begin{proof}
Clearly, 
$$\langle L_1, L_2, R_1, R_2\rangle \subseteq \langle A, E^*_{i,j} \mid (i,j)\in \Delta\rangle.$$
Since $[L_1, R_1] = H_1$, $[L_2, R_2] = H_2$, $[L_1, L_2] = L_3$ and $[R_1, R_2] = R_3$, 
$$A = R_1 + R_2 + L_3, \; A^\top = L_1 + L_2 + R_3 \in \langle L_1, L_2, R_1, R_2\rangle.$$
Moreover, since $H_1, H_2\in \langle L_1, L_2, R_1, R_2\rangle$, each $E^*_{i,j}$ belongs to $\langle L_1, L_2, R_1, R_2\rangle$ and the assertions hold by Corollary~\ref{cor:sl3c}.
\end{proof}

It is well-known that all the irreducible modules can be viewed as the following cyclic module: 
\begin{prop}[{\cite[Theorem on page 108]{jeh}}] \label{prop:pbw}
Let $\cL = \langle L_1, L_2, R_1, R_2\rangle$, and $W$ an irreducible submodule of $V$. Then there is one of the highest weight vectors $\bv\in W$ satisfying $L_1\bv = L_2\bv = \bo \neq \bv$, determined up to a nonzero scalar multiple,  and $W$ is spanned by either one of the following set of vectors
$$\{R_3^{k}R_2^{j}R_1^{i}\bv\mid i, j, k\geq 0\}, \text{ and } \{R_3^{k}R_1^{j}R_2^{i}\bv\mid i, j, k\geq 0\}.$$
\end{prop}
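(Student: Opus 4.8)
The plan is to recognize this statement as the standard structure theorem for finite‑dimensional irreducible representations of $\mathit{sl}_3(\bC)$ (the cited result) and to supply the two local ingredients that make it applicable here: the triangular decomposition of $\cL$, and the fact that the standard module is a weight module. First I would install the triangular decomposition. Using the isomorphism $\cL\simeq\mathit{sl}_3(\bC)$ and the identification of its Cartan and Borel subalgebras established above, set $\mathfrak{h}=\langle H_1,H_2\rangle$, $\mathfrak{n}^+=\langle L_1,L_2,L_3\rangle$ and $\mathfrak{n}^-=\langle R_1,R_2,R_3\rangle$, so that $\cL=\mathfrak{n}^-\oplus\mathfrak{h}\oplus\mathfrak{n}^+$; the point worth noting is that $L_3=[L_1,L_2]$ and $R_3=[R_1,R_2]$ genuinely lie in $\cL$, which is what makes the monomials in the statement meaningful. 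Since $H_1$ and $H_2$ are diagonal matrices, $V$---and hence every $\cL$-submodule, being $\mathfrak{h}$-stable---is the direct sum of its weight spaces, the weight of $\hat y$ with $\mathrm{type}(\hat y)=(r,s,t)$ being $(r-s,\,s-t)\in\ZZ^2$; and from Lemmas 2.1--2.3 and Corollary~\ref{cor:sl3c} the vectors $L_1,L_2$ carry weights $\alpha_1=(2,-1)$, $\alpha_2=(-1,2)$, while $R_1,R_2,R_3$ carry weights $-\alpha_1$, $-\alpha_2$, $-(\alpha_1+\alpha_2)$.

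Next I would produce the highest weight vector. As $V$ is finite-dimensional, $W$ has finitely many weights, so there is a weight $\lambda$ of $W$ maximal for the partial order $\mu\preceq\nu\iff\nu-\mu\in\ZZ_{\ge0}\alpha_1+\ZZ_{\ge0}\alpha_2$. Pick $\bv\neq\bo$ in the $\lambda$-weight space of $W$. If $L_1\bv\neq\bo$ or $L_2\bv\neq\bo$, the resulting vector would lie in $W$ and have weight $\lambda+\alpha_1$ or $\lambda+\alpha_2$, contradicting maximality of $\lambda$; hence $L_1\bv=L_2\bv=\bo$, and therefore also $L_3\bv=[L_1,L_2]\bv=\bo$. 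Thus $\bv$ is a highest weight vector in $W$.

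For the spanning assertion, observe that $U(\cL)\bv$ is a nonzero $\cL$-submodule of $W$, so irreducibility of $W$ forces $W=U(\cL)\bv$. Applying the PBW theorem to $\cL$ with a basis ordered so that the $R_i$ come first, then $H_1,H_2$, then the $L_i$, and using $H_i\bv\in\CC\bv$ and $L_i\bv=\bo$, one gets $W=U(\cL)\bv=U(\mathfrak{n}^-)\bv$. A PBW basis of $U(\mathfrak{n}^-)$ for the ordering $(R_1,R_2,R_3)$ is $\{R_3^kR_2^jR_1^i\}$, and for the ordering $(R_2,R_1,R_3)$ it is $\{R_3^kR_1^jR_2^i\}$; applying these to $\bv$ gives exactly the two spanning sets in the statement. (If one prefers not to invoke PBW for $\cL$, the same conclusion follows by showing directly, by induction on $i+j+k$ using the commutation relations of Lemmas 2.2--2.3 and Corollary~\ref{cor:sl3c}, that $\spn\{R_3^kR_2^jR_1^i\bv:i,j,k\ge0\}$ is stable under $L_1$ and $L_2$; it is visibly stable under $R_1,R_2,R_3$, and it is $\mathfrak{h}$-stable because it is spanned by weight vectors. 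This induction is the one genuinely computational point.)

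Finally, for uniqueness up to a scalar, note that $R_3^kR_2^jR_1^i\bv$ has weight $\lambda-(i+k)\alpha_1-(j+k)\alpha_2$, which is $\preceq\lambda$ with equality only if $i=j=k=0$; hence the $\lambda$-weight space of $W$ is exactly $\CC\bv$. If $\bv'\in W$ is any highest weight vector, of weight $\mu$, then repeating the spanning argument with $\bv'$ shows every weight of $W$ is $\preceq\mu$, so $\lambda\preceq\mu$; but $\mu$ is a weight of $W$ and $\lambda$ is maximal, whence $\mu=\lambda$ and $\bv'\in\CC\bv$. The main obstacle is not depth but bookkeeping: one must remember that in this setup the distinguished Borel is spanned by the $L_i$, so ``highest weight'' means annihilated by $L_1$ and $L_2$, and one must keep the root/weight labels and the membership $R_3\in\cL$ straight; with that in hand the argument is the textbook one.
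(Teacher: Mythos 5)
Your proof is correct and is essentially the same argument the paper relies on: the paper simply regards $W$ as a standard cyclic module and cites the corresponding theorem in Humphreys, and what you have written out is precisely the content of that theorem (triangular decomposition, maximal weight, PBW, and uniqueness of the highest weight vector), together with the local verifications the paper leaves implicit. No gaps.
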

\begin{proof}
The module $W$ can be regarded as a standard cyclic $\cL$ module; we have the assertion from the theorem. 
\end{proof}

\begin{defin}
Let $\bv\in V$ be a nonzero vector of $V$ satisfying $H_1\bv = m_1\bv$ and $H_2\bv = m_2\bv$. Then we call $\lambda = (m_1, m_2)$ the weight of $\bv$.
\end{defin}

\begin{lemma}
Let $W$ be an irreducible $\cL$ submodule of $V$ with the highest weight vector $\bv = E^*_{{[s,t]}}\bv$ with weight $\lambda = (m_1, m_2)$. Set $r = d-s-t$, $\bv_{i,j,k} = R_3^{k}R_2^{j}R_1^{i}\bv$, and $\bw_{j,i,k} = R_3^{k}R_1^{i}R_2^{j}\bv$. Then,  $L_1\bv = L_2\bv = L_3\bv = \bo$, $m_1 = r-s$, $m_2 = s-t$, and the following hold.
\begin{itemize}
\item[{\rm (i)}] The weight of $\bv_{i, j, k}$ and $\bw_{j,i,k}$ is 
$(m_1 -2i+j-k, m_2+i-2j-k)$ if it is not a zero vector.
\item[{\rm (ii)}] $\bv_{i,j,0} \neq \bo$ if $0\leq i\leq m_1$, $0\leq j\leq m_2+i$, and $\bw_{j,i,0} \neq \bo$ if $0\leq j\leq m_2$, $0\leq i\leq m_1+i$.
\end{itemize}
\end{lemma}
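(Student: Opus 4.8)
The plan is to verify the three claims directly from the $sl_3(\bC)$ representation theory already assembled in Section 2, treating $\bv$ as a highest weight vector and carrying out the weight bookkeeping. First I would establish the auxiliary identities $L_1\bv=L_2\bv=L_3\bv=\bo$ and the values $m_1=r-s$, $m_2=s-t$. The first two are part of the hypothesis; for $L_3\bv=\bo$ I invoke $L_3=[L_1,L_2]$ (Lemma 2.3(ii)) together with $L_1\bv=L_2\bv=\bo$. For the weight values, since $\bv=E^*_{[s,t]}\bv$ and $H_1=\sum_{s',t'}(r'-s')E^*_{[s',t']}$, $H_2=\sum_{s',t'}(s'-t')E^*_{[s',t']}$ by definition, applying $H_1$ and $H_2$ to $\bv$ reads off $m_1=r-s$ and $m_2=s-t$ immediately.

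For part (i), the key is that $R_1$, $R_2$, $R_3$ are weight vectors for the adjoint action of the Cartan subalgebra $\langle H_1,H_2\rangle$: from Lemma 2.2 and Lemma 2.3, $[H_1,R_1]=-2R_1$, $[H_2,R_1]=R_1$ (the transpose of $[H_2,L_1]=-L_1$), and analogously $[H_1,R_2]=R_2$, $[H_2,R_2]=-2R_2$, while $[H_1,R_3]=-R_3$, $[H_2,R_3]=-R_3$ (obtained from $H_3=H_1+H_2$ and $[H_3,R_3]=-2R_3$, plus one more relation; I would spell this out). Therefore each application of $R_1$ shifts the weight by $(-2,1)$, each $R_2$ by $(1,-2)$, and each $R_3$ by $(-1,-1)$. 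Applying $i$ copies of $R_1$, $j$ copies of $R_2$, and $k$ copies of $R_3$ to the weight-$(m_1,m_2)$ vector $\bv$ gives weight $(m_1-2i+j-k,\; m_2+i-2j-k)$, for both orderings $\bv_{i,j,k}$ and $\bw_{j,i,k}$, which is exactly the claim whenever the vector is nonzero. (Here I use that a weight-shift statement is order-independent since all three $R$'s have definite weight.)

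For part (ii), with $k=0$ the vectors $\bv_{i,j,0}=R_2^jR_1^i\bv$ and $\bw_{j,i,0}=R_1^iR_2^j\bv$ live inside the $sl_2$-type submodules generated by the triples $(H_1,L_1,R_1)$ and $(H_2,L_2,R_2)$. The standard $sl_2(\bC)$ theory (using $L_1\bv=\bo$, $H_1\bv=m_1\bv$, and $[H_1,L_1]=2L_1$, $[L_1,R_1]=H_1$) shows that $R_1^i\bv\neq\bo$ for $0\le i\le m_1$; moreover $R_1^i\bv$ is itself a highest weight vector for the $\langle H_2,L_2,R_2\rangle$-action because $L_2R_1^i\bv=R_1^iL_2\bv=\bo$ by $[L_2,R_1]=O$ (Lemma 2.3(iii)), and by part (i) it has $H_2$-eigenvalue $m_2+i$. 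Applying the $sl_2$ theory again to the $\langle H_2,L_2,R_2\rangle$-triple yields $R_2^jR_1^i\bv\neq\bo$ for $0\le j\le m_2+i$, giving the stated range for $\bv_{i,j,0}$; the range for $\bw_{j,i,0}$ is symmetric, interchanging the roles of the two $sl_2$-triples (the stated ``$0\le i\le m_1+i$'' should read $0\le i\le m_1+j$).

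The main obstacle I anticipate is part (ii): the nonvanishing is \emph{not} purely formal — it requires that the highest weight vector $\bv$ genuinely sits in the standard module $V$ where the relevant raising strings have the predicted lengths, i.e. that the $sl_2$-submodules are the full irreducible modules of dimension $m_1+1$ and $m_2+i+1$ rather than proper quotients. This follows because $V$ is a sum of finite-dimensional $sl_3$-modules (being finite-dimensional), on which the classical structure theory applies verbatim, but one should also confirm that $R_1$ and $R_2$ act as genuine raising operators relative to the grading by $E^*_{[s,t]}$ (so that no unexpected cancellation occurs among unit vectors of the same type) — this is where the combinatorial description of $R_1,R_2$ as ``change a $0$ to a $1$'' / ``change a $1$ to a $2$'' and sum, already used in Lemma 2.3, makes the argument rigorous. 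Everything else is routine weight arithmetic and the $sl_2$ ladder lemma.
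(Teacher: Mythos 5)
Your proof is correct and follows essentially the same route as the paper: for (i) the paper computes the weight shifts $(-2,1)$, $(1,-2)$, $(-1,-1)$ of $R_1,R_2,R_3$ by tracking the type $(r,s,t)$ directly rather than via the commutators $[H_a,R_b]$, but part (ii) is the identical two-step $\mathit{sl}_2$ ladder argument using $[L_2,R_1]=O$ (resp.\ $[L_1,R_2]=O$). Your observation that the bound ``$0\le i\le m_1+i$'' in the statement should read $0\le i\le m_1+j$ is also correct.
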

\begin{proof}
Recall that $L_1\bv = \bo$, $L_2\bv = \bo$ and hence $L_3\bv = [L_1, L_2]\bv = \bo$. 

Since $\bv = E^*_{{[s,t]}}\bv$, by the definitions of $H_1$ and $H_2$,$H_1\bv = (r-s)\bv$ and $H_2\bv = (s-t)\bv$. 
Hence, $m_1 = r-s$, $m_2 = s-t$.

(i) Suppose $\bw =  E^*_{{[s',t']}}\bw \neq \bo$ and $r' = d-s'-t'$. Then, the weight of $\bw$ is $\mu = (r'-s', s'-t')$. 

Since $\text{type}(R_1\bw) = (r(R_1\bw), s(R_1\bw), t(R_1\bw)) = (r'-1, s'+1, t')$, the weight of $R_1\bw$ is $(r'-s'-2, s'-t'+1) = \mu + (-2,1)$. 

Similarly, $\text{type}(R_2\bw) = (r(R_2\bw), s(R_2\bw), t(R_2\bw)) = (r', s'-1, t'+1)$ and the weight of $R_2\bw$ is $(r'-s'+1, s'-t'-2) = \mu + (1,-2)$. 

Finally, $\text{type}(R_3\bw) = (r(R_3\bw), s(R_3\bw), t(R_3\bw)) = (r'-1, s', t'+1)$ and the weight of $R_3\bw$ is $(r'-s'-1, s'-t'-1) = \mu + (-1,-1)$. 

Therefore, we have (i) by induction.

(ii) Since $\cL_1\bv$ is an irreducible $\cL_1$ module in $\cL\bv$ of highest weight $m_1 = s-t$. Hence, its dimension is $m_1+1$, and $\bv, R_1\bv, \ldots, R_1^{m_1}\bv$ form a basis. $\bv_{i,0,0} = R_1^i\bv\neq \bo$ if $0\leq i\leq m_1$, and the weight of $\bv_{i,0,0}$ is $(m_1-2i, m_2+i)$. 

Since $[L_2,R_1] = O$, $L_2$ and $R_1$ commute. Hence, $L_2\bv_{i,0,0} = L_2R_1^i\bv = R_1^iL_2\bv = \bo$. Thus, $\bv_{i,0,0}$ generates an irreducible $\cL_2$ module in $\cL\bv$ of highest weight $m_2+i$, if $\bv_{i,0,0}\neq \bo$. Hence, its dimension is $m_2+i+1$, and $R_1^i\bv, R_2R_1^i\bv \ldots, R_2^{m_2+i}R_1^i\bv$ form a basis. Thus, $\bv_{i,j,0} = R_2^jR_1^i\bv\neq \bo$ if $0\leq i\leq m_1$, $0\leq j\leq m_2+i$.

Since $[L_1, R_2] = O$, the assertion for $\bw_{j,i,0} = R_1^iR_2^j\bv$ can be proved similarly.
\end{proof}

\begin{lemma}\label{lemma:parameters}
Suppose $\lambda = \lambda = (m_1,m_2)$ be the highest weight of an irreducible $\cL$ submodule $W$ of $V$, and $\bv = E^*_{{[s,t]}}\bv$ a highest weight vector. Set $r = d-s-t$. Then, the following hold.
\begin{itemize}
\item[{\rm (i)}] $(r,s,t) = (m_1 + m_2+t, m_2+t,t)$, and $r\geq s\geq t$.
\item[{\rm (ii)}] $t\in \{0, 1, \ldots, [\frac{d}{3}]\}$.
\item[{\rm (iii)}] $m_1 = r-s = d-3t-2m_2$.
\item[{\rm (iv)}] $m_2 = s-t  \in \{0, 1, \ldots, [\frac{d-3t}{2}]\}$.
\end{itemize}
\end{lemma}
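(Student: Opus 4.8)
The plan is to reduce the whole lemma to the two identities $m_1=r-s$ and $m_2=s-t$ recorded in the preceding lemma, together with the single nontrivial fact that $m_1$ and $m_2$ are nonnegative integers; once these are in hand, (i)--(iv) are elementary manipulations of the relation $r+s+t=d$.

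First I would solve for $(r,s,t)$ in terms of $m_1$, $m_2$, and $t$. From $m_2=s-t$ one gets $s=m_2+t$, and then $m_1=r-s$ gives $r=m_1+s=m_1+m_2+t$; this is the displayed formula of (i). The inequalities $r\ge s\ge t$ asserted in (i) are literally the statements $m_1\ge 0$ and $m_2\ge 0$, so it suffices to prove those.

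For the nonnegativity (and integrality) of $m_1$ and $m_2$ I would invoke the $\mathit{sl}_2(\bC)$-structure already established: the triple $H_1,L_1,R_1$ (respectively $H_2,L_2,R_2$) generates a copy of $\mathit{sl}_2(\bC)$ acting on the finite-dimensional space $V$, and $\bv$ is a nonzero vector annihilated by $L_1$ (respectively $L_2$) with $H_1$-eigenvalue $m_1$ (respectively $H_2$-eigenvalue $m_2$). In a finite-dimensional $\mathit{sl}_2(\bC)$-module every nonzero highest weight vector has weight in $\ZZ_{\ge 0}$, so $m_1,m_2\in\ZZ_{\ge 0}$; equivalently, this is already contained in part (ii) of the preceding lemma, where $\bv,R_1\bv,\ldots,R_1^{m_1}\bv$ is used as a basis of an irreducible $\mathit{sl}_2(\bC)$-submodule. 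This completes (i).

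The remaining parts are pure arithmetic. For (ii), (i) gives $d=r+s+t\ge t+t+t=3t$, so $0\le t\le d/3$, i.e. $t\in\{0,1,\ldots,[\frac{d}{3}]\}$. For (iii), $m_1=r-s=(d-s-t)-s=d-2s-t=d-2(m_2+t)-t=d-3t-2m_2$. For (iv), $m_2\ge 0$ by (i), while $0\le m_1=d-3t-2m_2$ forces $2m_2\le d-3t$, hence $m_2=s-t\in\{0,1,\ldots,[\frac{d-3t}{2}]\}$. The only step that is not a one-line computation is the nonnegativity of $m_1$ and $m_2$, and even that is immediate from the representation theory of $\mathit{sl}_2(\bC)$, so I do not anticipate any real obstacle here.
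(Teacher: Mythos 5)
Your proof is correct and follows essentially the same route as the paper: both reduce everything to the identities $m_1=r-s\ge 0$, $m_2=s-t\ge 0$ from the preceding lemma and the relation $d=m_1+2m_2+3t$, then read off (i)--(iv) by arithmetic. The only difference is that you spell out the nonnegativity of $m_1,m_2$ via the $\mathit{sl}_2(\bC)$ highest-weight argument, which the paper leaves implicit.
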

\begin{proof}
Since $m_1= r-s\geq 0$, $m_2 = s-t \geq 0$ and $(r,s,t) = (m_1+m_2+t, m_2+t, t)$, $d = m_1 + 2m_2 + 3t$. Hence, $0\leq t = (d-m_1-2m_2)/3 \leq [\frac{d}{3}]$.

Moreover, since $2m_2 \leq m_1 + 2m_2 = d-3t$, $m_2 \leq \frac{d-3t}{2}$.
\end{proof}

\medskip
In the next section, we show that if integers $r, s, t$ satisfy $r \geq s\geq t$ with $d = r+s+t$, then there is an irreducible $\cL$ module with the highest weight vector of type $(r,s,t)$.  

We close this section with a technical lemma.

\begin{lemma} \label{lemma:technical}
The following hold for integers $i, j, k, i', j', k',p,q$.
\begin{itemize}
\item[{\rm (i)}] $(2i-j+k, -i+2j+k) = (2i'-j'+k', -i'+2j'+k')$ implies $(i,j,k) = (i',j',k')$ if $\min(i,j) = \min(i',j')$.
\item[{\rm (ii)}] $(2p-q, -p+2q) = (2i-j+k, -i+2j+k)$ implies $(i,j,k) = (p-t,q-t,t)$ if $\min(p,q) = \min(i,j) + t$. 
\end{itemize}
\end{lemma}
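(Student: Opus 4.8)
The plan is to handle both parts by elementary linear bookkeeping on the coordinates, with part (ii) reduced to part (i) via a translation‑invariance observation.

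For (i), I would take the difference and the sum of the two scalar equations $2i-j+k = 2i'-j'+k'$ and $-i+2j+k = -i'+2j'+k'$. Subtracting yields $3(i-j) = 3(i'-j')$, hence $i-j = i'-j'$; adding yields $i+j+2k = i'+j'+2k'$. Write $\delta = i-j = i'-j'$ and $m = \min(i,j) = \min(i',j')$. If $\delta \ge 0$ then in both pairs the second coordinate realizes the minimum, so $j = j' = m$ and $i = i' = m+\delta$; if $\delta < 0$ then the first coordinate realizes the minimum, so $i = i' = m$ and $j = j' = m-\delta$. In either case $(i,j) = (i',j')$, and then $i+j+2k = i'+j'+2k'$ forces $k = k'$.

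For (ii), the key point is that the pair $(2i-j+k,\,-i+2j+k)$ is unchanged under the substitution $(i,j,k)\mapsto (i+t,\,j+t,\,k-t)$ for any integer $t$ (the first linear form picks up $2t-t-t = 0$, the second $-t+2t-t = 0$). Applying this to $(i,j,k)$ with the given $t$, the triple $(i+t,\,j+t,\,k-t)$ has the same image as $(p,q,0)$, and moreover $\min(i+t,\,j+t) = \min(i,j)+t = \min(p,q)$, so the two triples $(i+t,\,j+t,\,k-t)$ and $(p,q,0)$ have equal minima of their first two coordinates. Part (i), applied to these two triples, then gives $(i+t,\,j+t,\,k-t) = (p,q,0)$, i.e. $(i,j,k) = (p-t,\,q-t,\,t)$, as claimed.

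I do not expect a genuine obstacle: the statement is elementary arithmetic. The only points needing a moment's care are the sign case‑split in (i) — checking that the common value $\delta$ of $i-j$ and $i'-j'$ forces the minimum into the same coordinate slot of both pairs — and correctly tracking the translation $(t,t,-t)$ when passing from (ii) back to (i).
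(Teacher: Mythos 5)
Your argument is correct and follows essentially the same route as the paper's own proof: subtract the two linear forms to get $i-j=i'-j'$, use the equality of minima to pin down $(i,j)=(i',j')$ and hence $k=k'$, and reduce (ii) to (i) via the shift $(i,j,k)\mapsto(i+t,j+t,k-t)$. Your write-up simply makes explicit the sign case-split and the translation invariance that the paper leaves implicit.
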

\begin{proof}
(i) By taking the differences of the entries, we have $3(i-j) = 3(i'-j')$. Hence $i-j = i'-j'$ and $i\leq j$ if and only if $i'\leq j'$. Since $\min(i,j) = \min(i',j')$, $i=i'$ or $j = j'$. Therefore, $i = i'$, $j = j'$ and $k = k'$.

(ii) Similarly, we have $p-q = i-j$, and  $\min(p,q) = \min(i,j) + t$. Therefore, $i = p-t$, $j = q-t$ and $k = t$.
\end{proof}

\section{Irreducible modules of $\cL$}

In this section, we give the structures of the Irreducible modules of $\cL$. 
Let $\cL = \langle  L_1, L_2, R_1, R_2\rangle$. Let $V_{[s,t]}$ be the subspace of $V$ spanned by vectors with $r$ zeros, $s$ ones and $t$ twos. Thus, 
$$V_{[s,t]} = E^*_{[s,t]}V.$$

Let $\bv\in V_{[s,t]}$ such that $L_1\bv = L_2\bv = \bo$. Since $H_1\bv = (r-s)\bv$ and $H_2\bv = (s-t)\bv$, $\bv$ generates an $\cL$ module with a highest weight vector $\bv$.

\begin{lemma}\label{lemma:key}
Let $\bv$ be one of the heigest weight vectors with weight $\lambda = (m_1, m_2)$ of an irreducible $\cL$ submodule $W$ of $V$. Let $\bv_{i,j,k} = R_3^{k}R_2^{j}R^{i}_1\bv$ for nonnegative integers $i,j,k$.
Let 
$$U_{\ell}  = \spn(\bv_{i,j,k} = R_3^{k}R_2^{j}R^{i}_1\bv\mid i,j,k\geq 0, \min(i,j) \leq \ell), \text{ and } U_{-1} = \bo$$
for each nonnegative integer $\ell$. Then, the following hold.
\begin{itemize}
\item[{\rm (i)}] $U_{\ell} =  \spn(R_3^{k}R^{i}_1R_2^{j}\bv\mid i,j,k\geq 0, \min(i,j) \leq \ell)$, and $U_\ell$ is an $\cL_3$ submodule of $V$. Moreover, $U_\ell$ is a direct sum of weight subspaces of $\cL$, and
$$U_{\ell}  = \spn(\bv_{i,j,k} = R_3^{k}R_2^{j}R^{i}_1\bv\mid i,j,k\geq 0, i\leq m_1, j\leq m_2, \min(i,j) \leq \ell).$$
\item[{\rm (ii)}] Let $\bv_{i,j,0} = R_2^{j}R^{i}_1\bv$ with $\min(i,j) = \ell$. Then, $L_3\bv_{i,j,0}\in U_{\ell-1}$. 
\item[{\rm (iii)}] Let $W_{i,j} = \spn(\bv_{i,j,k} \mid 0\leq k\leq m_1+m_2-i-j)$. If $\bv_{i,j,0} \not\in U_{\ell-1}$ with $\ell = \min(i,j)$, $i\leq m_1$ and $j\leq m_2$, then $(W_{i,j} + U_{\ell-1})/U_{\ell-1}$ is an irreducible $\cL_3$ module with highest weight $m_1+m_2 -i-j$.
\item[{\rm (iv)}] The  set of vectors 
$$\mathcal{B} = \{\bv_{i,j,k} = R_3^{k}R_2^{j}R_1^{i}\bv \mid 0\leq i\leq m_1, 0\leq j\leq m_2, 0\leq k\leq m_1+m_2-i-j\}$$
generates $W$ as a linear space.
\end{itemize}
\end{lemma}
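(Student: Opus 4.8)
The plan is to prove the four parts in the order (ii), (i), (iii), (iv), as each later part uses the earlier ones. Part (ii) is essentially a restatement of Lemma~\ref{lemma:commutators}(ii): that lemma gives $L_3\bv_{i,j,0}=L_3R_2^jR_1^i\bv=ij(m_1-i+1)R_2^{j-1}R_1^{i-1}\bv=ij(m_1-i+1)\,\bv_{i-1,j-1,0}$ when $i,j\geq1$, and $L_3\bv_{i,j,0}=\bo$ when $i=0$ or $j=0$; since $\min(i-1,j-1)=\ell-1$, the right-hand side lies in $U_{\ell-1}$.

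For (i), the equality of the two spanning descriptions of $U_\ell$ is immediate from the reordering formulas in Lemma~\ref{lemma:commutator-formula}(iii),(iv): multiplying $R_2^jR_1^i\bv=R_1^iR_2^j\bv+\sum_{p\geq1}p!\binom{i}{p}\binom{j}{p}R_3^pR_1^{i-p}R_2^{j-p}\bv$ by $R_3^k$ writes $\bv_{i,j,k}$ as $R_3^kR_1^iR_2^j\bv$ plus a combination of $R_3^{k'}R_1^{i'}R_2^{j'}\bv$ with $\min(i',j')<\min(i,j)$, and Lemma~\ref{lemma:commutator-formula}(iv) gives the reverse substitution, so the $R_2^{\bullet}R_1^{\bullet}$ and $R_1^{\bullet}R_2^{\bullet}$ families have equal span under any bound on $\min(i,j)$. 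Next, $U_\ell$ is invariant under $\cL_3=\langle H_3,L_3,R_3\rangle$: $R_3\bv_{i,j,k}=\bv_{i,j,k+1}$ keeps $\min(i,j)$ fixed; each $\bv_{i,j,k}$ is an $\cL$-weight vector (with weight $(m_1-2i+j-k,\,m_2+i-2j-k)$ when nonzero), hence an $H_3$-eigenvector; and from $[L_3,R_3]=H_3$ one has $L_3\bv_{i,j,k}=R_3L_3\bv_{i,j,k-1}+H_3\bv_{i,j,k-1}$, so an induction on $k$ with base case (ii) gives $L_3\bv_{i,j,k}\in U_{\min(i,j)}$. Since $U_\ell$ is spanned by $\cL$-weight vectors, it is a direct sum of weight subspaces of $\cL$. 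The substantive part of (i) is the refined spanning. A generator $\bv_{i,j,k}$ with $i>m_1$ vanishes because $\cL_1=\langle H_1,L_1,R_1\rangle$ acts on $\bv$ as an $\mathit{sl}_2(\bC)$-highest weight vector of weight $m_1$, so $R_1^{m_1+1}\bv=\bo$; one with $i\leq m_1$ and $j\leq m_2$ is already of the desired shape. For $i\leq m_1$ but $j>m_2$ I would apply a reorder-annihilate-reorder step: rewrite $\bv_{i,j,k}$ in $R_1^{\bullet}R_2^{\bullet}$ order via Lemma~\ref{lemma:commutator-formula}(iii), discard every summand containing a factor $R_2^{j'}\bv$ with $j'>m_2$ (which is $\bo$ because $\cL_2$ acts on $\bv$ as an $\mathit{sl}_2(\bC)$-highest weight vector of weight $m_2$), and rewrite the surviving summands, which now have $R_1$-exponent $\leq m_1$ and $R_2$-exponent $\leq m_2$, back into $R_2^{\bullet}R_1^{\bullet}$ order via Lemma~\ref{lemma:commutator-formula}(iv); tracking exponents through the two passes shows every $\bv_{i',j',k'}$ that appears has $i'\leq m_1$, $j'\leq m_2$, and $\min(i',j')<\min(i,j)$.

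For (iii), work in the quotient $V/U_{\ell-1}$, on which $\cL_3$ acts because $U_{\ell-1}$ is $\cL_3$-stable by (i). By (ii) the image $\overline{\bv}_{i,j,0}$ is annihilated by $L_3$, and it is an $H_3$-eigenvector of eigenvalue $(m_1-2i+j)+(m_2+i-2j)=m_1+m_2-i-j\geq0$ (using $i\leq m_1$, $j\leq m_2$); it is nonzero by hypothesis, hence an $\cL_3$-highest weight vector of dominant weight. Standard $\mathit{sl}_2(\bC)$-representation theory --- a nonzero finite-dimensional quotient of a Verma module of dominant integral weight is the irreducible module of that weight --- then shows $U(\cL_3)\overline{\bv}_{i,j,0}$ is irreducible of highest weight $m_1+m_2-i-j$, with basis $\{R_3^k\overline{\bv}_{i,j,0}=\overline{\bv}_{i,j,k}:0\leq k\leq m_1+m_2-i-j\}$ and $R_3^{m_1+m_2-i-j+1}\overline{\bv}_{i,j,0}=\bo$; by the definition of $W_{i,j}$ this module is exactly $(W_{i,j}+U_{\ell-1})/U_{\ell-1}$. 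For (iv), I would induct on $\ell$ on the statement $U_\ell=\spn\bigl(\{\bv_{i,j,k}\in\mathcal{B}:\min(i,j)\leq\ell\}\bigr)$: the refined form of (i) makes $U_\ell$ spanned by the $\bv_{i,j,k}$ with $i\leq m_1$, $j\leq m_2$, $\min(i,j)\leq\ell$; those with $\min(i,j)<\ell$ lie in $U_{\ell-1}$, and for $\min(i,j)=\ell$, part (iii) (or $\bv_{i,j,0}\in U_{\ell-1}$) forces $\bv_{i,j,k}\in U_{\ell-1}$ once $k>m_1+m_2-i-j$, while the others lie in $\mathcal{B}$. Finally $W=\spn(\bv_{i,j,k}:i,j,k\geq0)$ by Proposition~\ref{prop:pbw}, and since every nonzero $\bv_{i,j,k}$ has $i\leq m_1$, the reorder argument of (i) gives $W=U_{\min(m_1,m_2)}$, so the induction yields $W=\spn(\mathcal{B})$.

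The reorder-annihilate-reorder argument in (i) is where the actual work lies: one must check that nothing surviving the two passes has $R_1$-exponent exceeding $m_1$, $R_2$-exponent exceeding $m_2$, or $\min$-exponent as large as $\min(i,j)$, and keep the binomial coefficients of Lemma~\ref{lemma:commutator-formula} consistent. Everything else reduces to the commutation identities assembled in Section~2 together with routine facts about finite-dimensional $\mathit{sl}_2(\bC)$-modules.
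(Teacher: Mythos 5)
Your proposal is correct and follows essentially the same route as the paper: the two spanning descriptions of $U_\ell$ via the reordering identities of Lemma~\ref{lemma:commutator-formula}, the action of $L_3$ on $\bv_{i,j,0}$ via Lemma~\ref{lemma:commutators}, identification of $(W_{i,j}+U_{\ell-1})/U_{\ell-1}$ as an irreducible $\cL_3\simeq\mathit{sl}_2(\bC)$ quotient generated by a highest weight vector of dominant weight $m_1+m_2-i-j$, and the induction on $\ell$ for (iv). You are in fact somewhat more explicit than the paper on two points it leaves implicit --- the $\cL_3$-stability of $U_\ell$ and the reduction of generators with $j>m_2$ via the reorder-annihilate-reorder step --- but the underlying argument is the same.
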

\begin{proof}
(i) Let $U_{\ell}' =  \spn(R_3^{k}R^{i}_1R_2^{j}\bv\mid i,j,k\geq 0, \min(i,j) \leq \ell)$. 

First, note that $R_1^i\bv \neq 0$ if $i\leq m_1$ and $R_2^j\bv\neq 0$ if $j\leq m_2$. By definition, $U_0 = U_0'$. Suppose $\ell = \min(i,j) \geq 1$. It suffices to show the following. 
$$R_2^{j}R^{i}_1\bv - R_1^iR_2^j\bv \in U_{\ell-1} \cap U_{\ell-1}'.$$
By Lemma~\ref{lemma:commutator-formula} (iii), and (iv),
\begin{align*}
R_2^{j}R_1^{i}\bv - R_1^{i}R_2^{j}\bv & =  [R_2^j,R_1^i]\bv =  -[R_1^i,R_2^j]\bv\\
& = \sum_{k=1}^j k!\binom{i}{k}\binom{j}{k}R^k_3R^{i-k}_1R^{j-k}_2\bv \quad (\in U_{\ell-1}')\\
& = - \sum_{k=1}^i (-1)^kk!\binom{i}{k}\binom{j}{k}R_3^kR_2^{j-k}R_1^{i-k}\bv \quad (\in U_{\ell-1}).
\end{align*}
Hence, the difference is in $U_{\ell-1} \cap U_{\ell-1}'$.

(ii) We prove by induction on $\ell$. If $\ell = 0$, then $U_{\ell-1} = \bo$. By Lemma~\ref{lemma:commutators}, $L_3\bv_{i,0,0} = L_3\bv_{0,j,0} = \bo$. Moreover, as we have seen in the proof of (i), $\bv_{i,0,0} \neq 0$ if $i\leq m_1$ and $\bv_{0,j,0} \neq 0$ if $j\leq m_2$. 

(iii) Suppose $\ell = \min(i,j) >0$. Then, by Lemma~\ref{lemma:commutators} (ii),
$$L_3R_2^jR_1^i \bv  =  ij( m_1 -i+1)R_2^{j-1}R_1^{i-1}\bv\in U_{\ell-1}.$$
Suppose $0 < i\leq m_1$ and $0 < j\leq m_2$. Then, by induction hypothesis, $R_2^{j-1}R_1^{i-1}\bv\not\in U_{\ell-2}$. Hence, $L_3R_2^jR_1^i \bv \not\in U_{\ell-2}$. In particular, $R_2^jR_1^i \bv \not\in U_{\ell-2}$ with $\cL_3$-weight $m_1+m_2-i-j$.  Therefore, $(W_{i,j} + U_{\ell-1})/U_{\ell-1}$ is an irreducible $\cL_3$ of dimension $m_1+m_2-i-j+1$.

%

(iv) By (i), the following set of vectors 
$$\{\bv_{i,j,k} = R_3^{k}R_2^{j}R_1^{i}\bv \mid 0\leq i\leq m_1, 0\leq j\leq m_2, 0\leq k\}$$
generates $W$ as a linear space by Proposition~\ref{prop:pbw}.

We prove by induction on $0\leq \ell \leq \min(m_1, m_2)$. Assume $U_{\ell-1}$ is linearly spanned by the set 
$$\{\bv_{i,j,k}\mid 0\leq i \leq m_1, 0\leq j \leq m_2, 0\leq k \leq m_1+m_2 - i - j , \min(i,j) \leq \ell-1\}.$$
Since $U_{\ell-1}$ is an $\cL_3$ module by (i), if $\bv_{i,j,0} \in U_{\ell-1}$ for $\min(i,j) = \ell$, then $U_\ell$ is also spanned by the same set above. If a vector $\bv_{i,j,0} \not\in U_{\ell-1}$, then by (iii), we can restrict $k \leq m_1+m_2-i-j$. Therefore, $U_\ell$ is linearly spanned by the set
$$\{\bv_{i,j,k}\mid 0\leq i \leq m_1, 0\leq j \leq m_2, 0\leq k \leq m_1+m_2 - i - j , \min(i,j) \leq \ell\},$$
as desired.
%
\end{proof}

We give the degree formula:
\begin{lemma}[{\cite[Corollary in page 139, see also the page 140]{jeh}}] \label{lem:deg}
If the highest weight of an irreducible module $W$ is $\lambda = (m_1, m_2)$, then 
$$\dim(W) = \frac{1}{2}(m_1+1)(m_2+1)(m_1+m_2+2).$$
\end{lemma}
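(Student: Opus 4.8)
The plan is to read off $\dim W$ from the generating set of Lemma~\ref{lemma:key}(iv). That lemma already gives that
\[
\mathcal{B}=\{\bv_{i,j,k}=R_3^{k}R_2^{j}R_1^{i}\bv\mid 0\le i\le m_1,\ 0\le j\le m_2,\ 0\le k\le m_1+m_2-i-j\}
\]
spans $W$, so it suffices to show $\mathcal{B}$ is in fact a basis and then to count it. The count is elementary: since $i\le m_1$ and $j\le m_2$ force $m_1+m_2-i-j\ge 0$, for each admissible pair $(i,j)$ there are exactly $m_1+m_2-i-j+1$ admissible values of $k$, hence
\[
|\mathcal{B}|=\sum_{i=0}^{m_1}\sum_{j=0}^{m_2}\bigl(m_1+m_2+1-i-j\bigr)=(m_1+1)(m_2+1)\left(\frac{m_1+m_2}{2}+1\right),
\]
the last step being two applications of $\sum_{a=0}^{n}a=\frac{n(n+1)}{2}$ after factoring out $(m_1+1)(m_2+1)$; this is exactly $\frac{1}{2}(m_1+1)(m_2+1)(m_1+m_2+2)$.

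Thus the real content is the linear independence of $\mathcal{B}$, which I would establish along the filtration $U_{-1}\subseteq U_0\subseteq U_1\subseteq\cdots$ of Lemma~\ref{lemma:key}, proving by induction on $\ell$ that $\mathcal{B}_{\le\ell}:=\{\bv_{i,j,k}\in\mathcal{B}\mid\min(i,j)\le\ell\}$ is a basis of $U_\ell$. By Lemma~\ref{lemma:key}(i), $U_\ell$ and $U_{\ell-1}$ are direct sums of $\cL$-weight spaces, so for the inductive step it is enough to fix a weight $\mu$ and show that the vectors of $\mathcal{B}$ with $\min(i,j)=\ell$ and $\cL$-weight $\mu$ have linearly independent images in $(U_\ell/U_{\ell-1})_\mu$. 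Since the weight of $\bv_{i,j,k}$ is $(m_1,m_2)-(2i-j+k,\,-i+2j+k)$, Lemma~\ref{lemma:technical}(i) shows there is at most one such vector; so it suffices to verify that every $\bv_{i,j,k}\in\mathcal{B}$ with $\min(i,j)=\ell$ has nonzero image in $U_\ell/U_{\ell-1}$. For $k=0$ one has $i,j\ge 1$, $i\le m_1$, $j\le m_2$, and Lemma~\ref{lemma:commutators}(ii) gives $L_3\bv_{i,j,0}=ij(m_1-i+1)\bv_{i-1,j-1,0}$ with $ij(m_1-i+1)\ne 0$; by the induction hypothesis $\bv_{i-1,j-1,0}\in U_{\ell-1}\setminus U_{\ell-2}$, so $\bv_{i,j,0}\notin U_{\ell-1}$, and then Lemma~\ref{lemma:key}(iii) identifies $(W_{i,j}+U_{\ell-1})/U_{\ell-1}$ as the irreducible $\cL_3$-module of dimension $m_1+m_2-i-j+1$, whose standard $\mathit{sl}_2(\bC)$-basis is precisely the images of $\bv_{i,j,0},\dots,\bv_{i,j,m_1+m_2-i-j}$, all nonzero. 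This disposes of every admissible $k$, and the base case $\ell=0$ goes the same way, using Lemma~\ref{lemma:key}(ii) to see that $\bv_{i,0,0}$ ($i\le m_1$) and $\bv_{0,j,0}$ ($j\le m_2$) are nonzero highest weight vectors for $\cL_3$. This completes the induction, so $\dim W=|\mathcal{B}|$ and the formula follows.

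I expect the main obstacle to be exactly this inductive bookkeeping along the filtration: one has to keep tight control of where $L_3$ sends each $\bv_{i,j,0}$ and be certain that the $\cL_3$-strands attached to different pairs $(i,j)$ at a fixed level $\ell$ do not interfere, the clean way to guarantee which is to combine the weight-space decomposition of Lemma~\ref{lemma:key}(i) with the injectivity statement of Lemma~\ref{lemma:technical}(i). There is also a shorter route, presumably the one behind the citation to \cite{jeh}: $W$ is a finite-dimensional irreducible module over $\cL\simeq\mathit{sl}_3(\bC)$ of highest weight $\lambda=(m_1,m_2)$, so the Weyl dimension formula for $\mathit{sl}_3(\bC)$ applies and evaluates to $\frac{1}{2}(m_1+1)(m_2+1)(m_1+m_2+2)$; on that route the explicit computation with $\mathcal{B}$ above serves as an internal check that the basis produced in Lemma~\ref{lemma:key}(iv) has the predicted size.
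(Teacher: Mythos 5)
The paper itself does not prove this lemma: it is quoted from Humphreys as the Weyl dimension formula specialized to $\mathit{sl}_3(\bC)$ (for $\lambda=m_1\lambda_1+m_2\lambda_2$ the product $\prod_{\alpha>0}\langle\lambda+\rho,\alpha^\vee\rangle/\langle\rho,\alpha^\vee\rangle$ over the three positive roots gives $(m_1+1)(m_2+1)(m_1+m_2+2)/2$). Your closing paragraph identifies this correctly, so on that route you agree with the paper. Your primary route, however, reverses the paper's logic: the paper uses Lemma~\ref{lem:deg} together with the spanning set of Lemma~\ref{lemma:key}(iv) to conclude in Proposition~\ref{prop:basis} that $\mathcal{B}$ is a basis, whereas you propose to prove directly that $\mathcal{B}$ is linearly independent and read off the dimension from $|\mathcal{B}|$, which you count correctly. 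That would be a more self-contained argument, but note that Lemma~\ref{lemma:key}(iii) is stated conditionally (``if $\bv_{i,j,0}\notin U_{\ell-1}$'') precisely because the paper never establishes that non-membership.

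That non-membership is where your argument has a genuine gap. From $L_3\bv_{i,j,0}=ij(m_1-i+1)\bv_{i-1,j-1,0}$ and $\bv_{i-1,j-1,0}\notin U_{\ell-2}$ you conclude ``so $\bv_{i,j,0}\notin U_{\ell-1}$,'' but this does not follow: $U_{\ell-1}$ is $L_3$-stable by Lemma~\ref{lemma:key}(i), so $\bv_{i,j,0}\in U_{\ell-1}$ would only force $L_3\bv_{i,j,0}\in U_{\ell-1}$, which is entirely compatible with $L_3\bv_{i,j,0}\notin U_{\ell-2}$. (What does follow is the weaker statement $\bv_{i,j,0}\notin U_{\ell-2}$, which is all the paper claims in the proof of Lemma~\ref{lemma:key}(iii).) To close the gap you must analyze the full weight-$\mu$ slice of $U_{\ell-1}$ for $\mu=(m_1-2i+j,\,m_2+i-2j)$, which by Lemma~\ref{lemma:technical}(ii) is spanned by $\{R_3^{t}\bv_{i-t,j-t,0}\mid 1\le t\le \ell\}$, and show that $\bv_{i,j,0}$ lies outside this span, i.e.\ that $\mu$ has multiplicity $\ell+1$ rather than $\ell$. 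That is exactly the Freudenthal-type multiplicity computation the paper avoids by importing the Weyl formula. So either supply that argument or fall back on the citation as the paper does; as written, your basis-theoretic derivation of the dimension formula is incomplete.
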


\begin{prop}\label{prop:basis}
Let $\bv$ be a highest weight vector with weight $\lambda = (m_1, m_2)$ of an irreducible $\cL$ submodule $W$ of $V$. Let $\bv_{i,j,k} = R_3^{k}R_2^{j}R^{i}_1\bv$. Then, the set
$$\mathcal{B} = \{\bv_{i,j,k} \mid 0\leq i\leq m_2, 0\leq j\leq m_1, 0\leq k\leq m_1+m_2-i-j\}$$
forms a basis of $W$, and $\dim W = \frac12(m_1+1)(m_2+1)(m_1+m_2+2)$.
\end{prop}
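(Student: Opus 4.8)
The plan is to combine the spanning statement from Lemma~\ref{lemma:key}(iv) with the dimension formula of Lemma~\ref{lem:deg} and an elementary cardinality count. Lemma~\ref{lemma:key}(iv) already shows that the set $\mathcal{B} = \{\bv_{i,j,k} = R_3^{k}R_2^{j}R_1^{i}\bv \mid 0\leq i\leq m_1, 0\leq j\leq m_2, 0\leq k\leq m_1+m_2-i-j\}$ spans $W$ as a linear space. (Note the roles of $m_1$ and $m_2$ in the ranges of $i$ and $j$: the statement of Proposition~\ref{prop:basis} swaps them relative to Lemma~\ref{lemma:key}, but by symmetry between the two chains $\bv_{i,j,k}$ and $\bw_{j,i,k}$ afforded by Proposition~\ref{prop:pbw} and Lemma~\ref{lemma:key}(i), either ordering works; I would state the count in whichever convention is used and simply note the symmetry.) So once I know $|\mathcal{B}| = \dim W$, the spanning set $\mathcal{B}$ is forced to be linearly independent, hence a basis.

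The first step, then, is the purely combinatorial identity
\[
\sum_{i=0}^{m_1}\sum_{j=0}^{m_2}\bigl(m_1+m_2-i-j+1\bigr) = \frac{1}{2}(m_1+1)(m_2+1)(m_1+m_2+2).
\]
I would verify this by summing the inner variable first: for fixed $i$, $\sum_{j=0}^{m_2}(m_1+m_2-i-j+1) = (m_2+1)(m_1-i+1) + \binom{m_2+1}{2}$, and then summing over $i$ from $0$ to $m_1$ using $\sum_{i=0}^{m_1}(m_1-i+1) = \binom{m_1+2}{2}$. Collecting terms gives $(m_2+1)\binom{m_1+2}{2} + (m_1+1)\binom{m_2+1}{2}$, which one checks equals $\tfrac12(m_1+1)(m_2+1)(m_1+m_2+2)$ after factoring out $\tfrac12(m_1+1)(m_2+1)$ and simplifying $(m_1+2) + m_2 = m_1+m_2+2$. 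Each summand $m_1+m_2-i-j+1$ is a positive integer exactly on the index range defining $\mathcal{B}$, so this sum is precisely $|\mathcal{B}|$ (the triples are pairwise distinct since $(i,j,k)$ are distinct by construction).

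The second step is to apply Lemma~\ref{lem:deg}, which gives $\dim W = \tfrac12(m_1+1)(m_2+1)(m_1+m_2+2)$. Combining: $|\mathcal{B}| = \dim W$, and $\mathcal{B}$ spans $W$ by Lemma~\ref{lemma:key}(iv); a spanning set whose cardinality equals the dimension is a basis. This also records the dimension formula asserted in the proposition.

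I do not anticipate a serious obstacle here, since all the heavy lifting — that $\mathcal{B}$ spans and that the dimension is as claimed — is already done in Lemmas~\ref{lemma:key} and~\ref{lem:deg}. The only point requiring a little care is the bookkeeping between the $(i\le m_1,\,j\le m_2)$ convention of Lemma~\ref{lemma:key}(iv) and the $(i\le m_2,\,j\le m_1)$ convention in the statement of Proposition~\ref{prop:basis}; I would resolve this by invoking the $R_1\leftrightarrow R_2$ symmetry (equivalently, passing to the $\bw$-chain via Lemma~\ref{lemma:key}(i)), under which $m_1$ and $m_2$ exchange roles, so that the count and the conclusion are unaffected.
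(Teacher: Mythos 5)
Your proposal is correct and follows essentially the same route as the paper: the paper's proof likewise counts $|\mathcal{B}|$ via the double sum $\sum\sum(m_1+m_2-i-j+1)=\tfrac12(m_1+1)(m_2+1)(m_1+m_2+2)$, invokes Lemma~\ref{lemma:key} for spanning and Lemma~\ref{lem:deg} for the dimension, and concludes that a spanning set of the right cardinality is a basis. Your explicit remark about reconciling the swapped roles of $m_1$ and $m_2$ between Lemma~\ref{lemma:key}(iv) and the statement of the proposition is a point the paper glosses over, and is handled correctly by the $R_1\leftrightarrow R_2$ symmetry.
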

\begin{proof}
We now count the number of vectors in the set of vectors $\mathcal{B}$ using Lemma~\ref{lemma:key}.
\begin{align*}
\sum_{i=0}^{m_2}\sum_{j=0}^{m_1} (m_1+m_2 - i - j +1) & = 
\sum_{i=0}^{m_2}\sum_{j=0}^{m_1} (i + j +1)\\
& = \sum_{i=0}^{m_2}\frac{1}{2}(m_1+1)(m_1+2i+2)\\
& = \frac{1}{2}(m_1+1)(m_2+1)(m_1+m_2+2).
\end{align*}
Since the number of vectors in $\mathcal{B}$ is equal to the dimension of the module of $W$. Thus, we have the assertion.
\end{proof}

\begin{lemma}\label{lemma:existence}
For each $(r,s,t)$ with $r\geq s\geq t$, there is a highest weight vector $\bv$ such that $(r(\bv), s(\bv), t(\bv)) = (r,s,t)$. Every highest-weight vector is of this type.
\end{lemma}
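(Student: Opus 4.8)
The plan is to count highest-weight vectors of each prescribed type and match this count with the structure already extracted in Section 3. Recall from Lemma~\ref{lemma:parameters} that any highest weight vector $\bv$ of an irreducible $\cL$-submodule satisfies $\bv = E^*_{[s,t]}\bv$ with $r \geq s \geq t$, $d = r+s+t$, and weight $\lambda = (m_1, m_2) = (r-s, s-t)$. So the assertion ``every highest-weight vector is of this type'' is essentially this lemma; what remains is \emph{existence}: for each triple $(r,s,t)$ with $r \geq s \geq t$ and $r+s+t = d$, the space of highest-weight vectors of type $(r,s,t)$ inside the standard module $V$ is nonzero. I would phrase existence in terms of the subspace $\cU_{[s,t]} = \{\bv \in V_{[s,t]} \mid L_1\bv = L_2\bv = \bo\}$, since such a $\bv$ generates an irreducible $\cL$-module with highest weight $(r-s, s-t)$ (as noted right before Lemma~\ref{lemma:key}); the task is to show $\dim \cU_{[s,t]} \geq 1$.

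First I would note that $V = \bigoplus W$ decomposes as a direct sum of irreducible $\cL$-submodules, since $\cT$ is semisimple (closed under conjugate-transpose), and hence $\cL$ acts completely reducibly on $V$. Each irreducible summand contributes exactly a one-dimensional space of highest-weight vectors (Proposition~\ref{prop:pbw}), so $\dim \cU_{[s,t]}$ equals the number of irreducible summands with highest weight $(r-s, s-t)$ and highest-weight vector of type $(r,s,t)$. Now I would run a dimension count. For fixed $(r,s,t)$ with $r \geq s \geq t$, Lemma~\ref{lemma:parameters} shows the correspondence $(r,s,t) \leftrightarrow (m_1, m_2) = (r-s, s-t)$ is a bijection onto pairs $(m_1,m_2)$ of nonnegative integers with $m_1 + 2m_2 \leq d$ and $d - m_1 - 2m_2 \equiv 0 \pmod 3$ — and conversely every such $(m_1,m_2)$ arises from exactly one admissible triple. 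Using Lemma~\ref{lemma:key}(i) and Proposition~\ref{prop:basis}, each irreducible module $W$ with highest weight $(m_1,m_2)$ occupies, in the weight space of type $(r',s',t')$, a contribution governed by the weights $(m_1-2i+j-k,\ m_2+i-2j-k)$ of its basis vectors $\bv_{i,j,k}$. The crucial point is that the \emph{only} modules whose weight multiplicities can ``reach'' the extremal type $(d,0,0)$, i.e.\ the zero-weight-shift corner $X_{[0,0]}$, etc., are constrained; more to the point, I would instead argue by comparing the total dimension $\sum_{[s,t]} |X_{[s,t]}| = 3^d = |X|$ with $\sum_{(m_1,m_2)} (\text{mult}) \cdot \tfrac12(m_1+1)(m_2+1)(m_1+m_2+2)$ over all admissible $(m_1,m_2)$, where the multiplicity is the number of irreducible summands of that highest weight.

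Concretely, I expect the cleanest route is: show that $\dim \cU_{[s,t]} = |X_{[s,t]}| - \dim(R_1 V_{[s-1,t]} + R_2 V_{[s+1,t-1]})$ restricted appropriately — i.e.\ $\cU_{[s,t]}$ is the common kernel of $L_1|_{V_{[s,t]}}$ and $L_2|_{V_{[s,t]}}$, and since $L_1 = R_1^\top$, $L_2 = R_2^\top$ act between adjacent weight spaces, one computes these ranks combinatorially. Then one verifies $\dim \cU_{[s,t]} \geq 1$ whenever $r \geq s \geq t$ by exhibiting an explicit symmetric vector: take $\bv = \sum_{y \in X_{[s,t]}} c_y \hat y$ where $c_y$ depends only on a suitable ``shape'' invariant making $L_1\bv = L_2\bv = \bo$. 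The natural candidate, suggested by the $sl_3$ picture, is the vector obtained by symmetrizing over all arrangements — analogous to the classical highest-weight vectors in $\mathrm{Sym}^{(d)}(\CC^3)$-type constructions. The main obstacle is verifying that this explicit $\bv$ is annihilated by both $L_1$ and $L_2$: one must check that the ``descent'' maps $L_1$ (changing a $1$ to a $0$) and $L_2$ (changing a $2$ to a $1$) kill the symmetrized combination, which amounts to a telescoping/cancellation identity among multinomial coefficients. Once existence of one such vector per type is established, the claim ``every highest weight vector is of this type'' follows from Lemma~\ref{lemma:parameters}, and the count $\dim \cU_{[s,t]}$ being exactly the number of admissible summands will also be forced by the dimension identity $\sum \dim W = 3^d$, completing the proof and simultaneously setting up the block decomposition in Theorem~1.4.
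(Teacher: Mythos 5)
There is a genuine gap: the only nontrivial content of the lemma is the \emph{existence} of a highest weight vector of each admissible type, and your proposal never establishes it. You correctly reduce the converse direction to Lemma~\ref{lemma:parameters}, and your observation that $\dim\{\bv\in V_{[s,t]} \mid L_1\bv=L_2\bv=\bo\}$ equals the multiplicity of the corresponding highest weight (by complete reducibility) is fine. But you then offer two unfinished routes. The counting route (computing ranks of $L_1,L_2$ between weight spaces, or matching $\sum \dim W = 3^d$) is exactly the ``subtract off all larger highest weights in lexicographic order'' argument that the paper explicitly declines to carry out because it is very complicated; you do not carry it out either. The explicit-vector route is where the real problem lies: your ``natural candidate,'' the vector obtained by \emph{symmetrizing} over all arrangements, does not work. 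If $\bv=\sum_{y\in X_{[s,t]}}\hat y$, then $L_1\bv=(r+1)\sum_{z\in X_{[s-1,t]}}\hat z\neq \bo$ whenever $s>0$, so the fully symmetric vector is a highest weight vector only for the type $(d,0,0)$ (the principal module). Saying the coefficients $c_y$ should ``depend on a suitable shape invariant making $L_1\bv=L_2\bv=\bo$'' restates the goal rather than solving it, and the ``telescoping identity among multinomial coefficients'' you defer is precisely the missing step.

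The paper's proof goes the opposite way: it \emph{antisymmetrizes}. Starting from the all-zeros string (type $(r-s,0,0)$ in a smaller Hamming digraph), it concatenates $s-t$ copies of the two-coordinate block $\widehat{(1,0)}-\widehat{(0,1)}$ and $t$ copies of the three-coordinate alternating (determinant-like) sum over the six permutations of $(0,1,2)$. Because $L_1$ and $L_2$ act coordinatewise, each such block is visibly killed by both operators, so the concatenation is a highest weight vector of type $(r,s,t)$ by induction, with no combinatorial identity to verify. This is the standard Schur--Weyl picture in which highest weight vectors come from antisymmetrizing the columns of a Young tableau; your intuition pointed at the symmetric construction, which lands in the wrong isotypic component.
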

\begin{proof}
By construction in Proposition~\ref{prop:basis}, a vector of type $(d,0,0)$ generates the principal module. 

Suppose $t = 0$, and we proceed by induction on $s$. Let $\bu$ be a vector in $\bC^{d-2}$ with $L_1'\bu = L_2'\bu = \bo$, where $L_1'$ and $L_2'$ are $L_1$ and $L_2$ operators for $H^*(d-2,3)$. Let $\bv = ((1,0)-(0,1))\ast\bu$, where $\ast$ denotes the concatenation. Then clearly, $L_1\bv = L_2\bv = \bo$. Therefore, the multiplicity of the weight of the vector of type $(r,s,0)$ in the irreducible module is at least one.

Suppose $t>0$. Then, by Lemma~\ref{lemma:parameters}, $r\geq s\geq t$. Let $\bu$ be the highest weight vector of the same highest weight of length $d-3$. 

Let $\bv = ((0,1,2) + (1,2,0)+(2,0,1)-(0,2,1)-(1,0,2)-(2,1,0))\ast \bu$, where $\ast$ denotes the concatenation of vectors. Then $L_1\bv = L_2\bv = \bo$ and $H_1\bv = m_1\bv$ and $H_2\bv = m_2\bv$ as desired.
\end{proof}

\medskip
In the proof above, since $\binom{d}{s} - \binom{d}{s-1}>0$ for $s \leq [\frac{d}{2}]$, there is a highest weight vector of type $(r,s,0)$ if $s \leq [\frac{d}{2}]$.

It is possible to prove the rest as well by subtracting all the factors of the irreducible modules with multiplicity with larger $(m_1, m_2)$ in lexicographical order, though it is very complicated.

\begin{example}
For a small $d$, we list $(r,s,t)$ of the highest weight vector and the highest weight $\lambda = (m_1,m_2)$ as $[r,s,t], (m_1, m_2): \dim, m$, where $\dim$ denotes the dimension of the corresponding irreducible module, and $m$ the multiplicity in the standard module in the following.
\begin{itemize}
\item $d=1$: $[1,0,0],(1,0):3,1$.
\item $d=2$: $[2,0,0],(2,0):6,1$, $[1,1,0],(0,1): 3,1$.
\item $d=3$: $[3,0,0],(3,0):10,1$, $[2,1,0],(1,1):8,2$, $[1,1,1],(0,0):1,1$
\item $d=4$:  $[4,0,0],(4,0):15,1$, $[3,1,0],(2,1):15,3$, $[2,2,0],(0,2):6,2$, $[2,1,1],(1,0):3,3$.
\item $d=5$: $[5,0,0],(5,0):21,1$, $[4,1,0],(3,1):24,4$, $[3,2,0],(1,2):15,5$, $[3,1,1],(2,0):6,6$, $[2,2,1],(0,1):3,5$.
\end{itemize}
\end{example}

\section{The structure of the Terwilliger algebra}

In this section, we prove the following result. 

\begin{thm}\label{thm:uchida}
The following hold.
$$\cT(H^*(d,3)) \simeq \mathrm{Sym}^{(d)}(\mat_3(\bC))\simeq \bigoplus_{i\in \Lambda}\mat_i(\bC), $$
where 
\begin{align*}
\Lambda = &\left\{\frac12(d-3\ell-2m+1)(m+1)(d-3\ell-m+2)\:\right|\:\\ 
&\hspace{110pt}\left. 0\leq \ell\leq \left[\frac{d}{3}\right], 0\leq m\leq \left[\frac{d-3\ell}{2}\right]\right\}.
\end{align*}
\end{thm}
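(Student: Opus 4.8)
The plan is to realize the standard module $V$ as a $d$-fold tensor power on which $\cT$ acts as the commutant of the symmetric group $S_d$, and then to read off both isomorphisms from Schur--Weyl duality. Identify $X$ with $\{0,1,2\}^d$, hence $V=\bC^{|X|}$ with $(\bC^3)^{\otimes d}$, sending $\hat y$ to $e_{y_1}\otimes\cdots\otimes e_{y_d}$, where $e_0,e_1,e_2$ is the standard basis of $\bC^3$. Let $E_{ab}$ ($a,b\in\{0,1,2\}$) denote the matrix units of $\mat_3(\bC)$, and let $\Phi_d\colon\mat_3(\bC)\to\mat_3(\bC)^{\otimes d}=\End((\bC^3)^{\otimes d})$ be the diagonal embedding $x\mapsto\sum_{i=1}^d 1^{\otimes(i-1)}\otimes x\otimes 1^{\otimes(d-i)}$, which is a Lie-algebra homomorphism for the commutator bracket. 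Checking the definitions of Section~2 against this identification gives
$$R_1=\Phi_d(E_{10}),\quad R_2=\Phi_d(E_{21}),\quad L_3=\Phi_d(E_{02}),\quad L_1=\Phi_d(E_{01}),\quad L_2=\Phi_d(E_{12}),\quad R_3=\Phi_d(E_{20}),$$
so that $A=\Phi_d(E_{10}+E_{21}+E_{02})$, $A^\top=\Phi_d(E_{01}+E_{12}+E_{20})$, $H_1=\Phi_d(E_{00}-E_{11})$ and $H_2=\Phi_d(E_{11}-E_{22})$. Since $E_{01},E_{10},E_{12},E_{21}$ generate $\mathit{sl}_3(\bC)$ as a Lie algebra, the Lie algebra $\cL=\langle L_1,L_2,R_1,R_2\rangle$ is precisely $\Phi_d(\mathit{sl}_3(\bC))$, acting on $(\bC^3)^{\otimes d}$ through the standard representation of $\mathit{sl}_3(\bC)$ on each tensor factor.

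Now let $S_d$ act on $\mat_3(\bC)^{\otimes d}$ by permuting tensor factors; by definition $\mathrm{Sym}^{(d)}(\mat_3(\bC))=(\mat_3(\bC)^{\otimes d})^{S_d}=\End_{S_d}((\bC^3)^{\otimes d})$. Each generator $A$, $A^\top$, $E^*_{i,j}$ of $\cT$ is $S_d$-invariant under this identification (the $E^*_{i,j}$ are, by construction, sums over all ways of placing the one-dimensional idempotents $E_{00},E_{11},E_{22}$ into the $d$ factors with prescribed multiplicities), so $\cT\subseteq\mathrm{Sym}^{(d)}(\mat_3(\bC))$. For the reverse inclusion, note that $\sum_{(i,j)\in\Delta}E^*_{i,j}=I$, so the identity of $\mat_3(\bC)^{\otimes d}$ lies in $\cT$, and since $\Phi_d(I_3)=dI$ we obtain $\Phi_d(\mathit{gl}_3(\bC))\subseteq\cT$; hence the associative subalgebra generated by $\Phi_d(\mathit{gl}_3(\bC))$ is contained in $\cT$. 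By classical Schur--Weyl duality this subalgebra --- the image of $U(\mathit{gl}_3(\bC))$, equivalently of $\bC[GL_3(\bC)]$, in $\End((\bC^3)^{\otimes d})$ --- is exactly the full commutant $\End_{S_d}((\bC^3)^{\otimes d})=\mathrm{Sym}^{(d)}(\mat_3(\bC))$. Combining the two inclusions yields $\cT=\mathrm{Sym}^{(d)}(\mat_3(\bC))$, the first isomorphism.

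For the block decomposition we use Schur--Weyl duality in module form: $(\bC^3)^{\otimes d}\cong\bigoplus_{\mu}S^\mu\otimes V^\mu$ as a $\bC[S_d]\otimes U(\mathit{gl}_3(\bC))$-module, the sum running over partitions $\mu\vdash d$ with at most three parts, $S^\mu$ the Specht module and $V^\mu$ the irreducible polynomial $GL_3(\bC)$-module of highest weight $\mu$. Since $S_d$ acts only on the Specht factors and $\Hom_{S_d}(S^\mu,S^\nu)=\delta_{\mu\nu}\bC$, taking $S_d$-invariant endomorphisms gives
$$\cT=\End_{S_d}((\bC^3)^{\otimes d})\cong\bigoplus_{\mu}\End_{\bC}(V^\mu)\cong\bigoplus_{\mu}\mat_{\dim V^\mu}(\bC).$$
Write $\mu=(r,s,t)$ with $r\geq s\geq t\geq 0$ and $r+s+t=d$; then $V^\mu$, viewed as an $\mathit{sl}_3(\bC)$-module, has highest weight $(m_1,m_2)=(r-s,\,s-t)$, so by the Weyl dimension formula (Lemma~\ref{lem:deg}) $\dim V^\mu=\frac12(m_1+1)(m_2+1)(m_1+m_2+2)$. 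Setting $\ell=t$ and $m=s-t$ one has $m_1=d-3\ell-2m$ and $m_2=m$, and as $(r,s,t)$ ranges over all such triples the pair $(\ell,m)$ ranges over $0\leq\ell\leq[\frac{d}{3}]$, $0\leq m\leq[\frac{d-3\ell}{2}]$; substituting turns the displayed decomposition into $\cT\cong\bigoplus_{i\in\Lambda}\mat_i(\bC)$, where the index set is understood with multiplicity (so, e.g., for $d=4$ the value $15$ occurs twice, from $\mu=(4,0,0)$ and $\mu=(3,1,0)$, in agreement with the small cases listed at the end of Section~4).

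I expect the delicate points to be concentrated in the first two paragraphs. One must verify carefully that $\cL$ is genuinely the \emph{diagonal} copy of $\mathit{sl}_3(\bC)$ --- this requires pinning down the transpose/direction conventions for $A$ and for the operators $R_i,L_i$ of Section~2 --- and one must be attentive that it is the $\mathit{gl}_3$ (not $\mathit{sl}_3$) enveloping algebra, equivalently $\mathit{sl}_3(\bC)$ together with the identity $I$, whose image is the full $S_d$-commutant; this is why the inclusion $I\in\cT$ (from $\sum E^*_{i,j}=I$) is essential. As an alternative that stays inside the framework of Sections~3--4 and avoids Schur--Weyl for the second isomorphism, one may argue instead that $\cT$ is a $\ast$-closed (hence semisimple) subalgebra of $\mat_X(\bC)$, so by the Wedderburn theorem $\cT\cong\bigoplus_W\mat_{\dim W}(\bC)$ with one summand per isomorphism class of simple $\cT$-submodule $W$ of the faithful module $V$; by Lemma~\ref{lemma:parameters} and Lemma~\ref{lemma:existence} these are exactly the irreducible $\cL$-modules of highest weight $(m_1,m_2)=(d-3\ell-2m,\,m)$ for $(\ell,m)$ in the above range, and Lemma~\ref{lem:deg} supplies their dimensions; one then matches this list with $\mathrm{Sym}^{(d)}(\mat_3(\bC))\cong\bigoplus_{\mu\vdash d,\ \ell(\mu)\leq 3}\mat_{\dim V^\mu}(\bC)$ through $\mu=(r,s,t)$ to recover $\cT\cong\mathrm{Sym}^{(d)}(\mat_3(\bC))$.
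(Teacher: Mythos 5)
Your proof is correct, but it reaches both isomorphisms by a genuinely different route than the paper. For the inclusion $\mathrm{Sym}^{(d)}(\mat_3(\bC))\subseteq\cT$, the paper first shows $L(M)\in\cT$ for every $M\in\mat_3(\bC)$ by explicit manipulation of the matrix units (its Lemma on $L(eM\pm Me)$), then upgrades this to $M^{\otimes d}\in\cT$ by a telescoping polarization argument, and finally cites Levstein--Maldonado--Penazzi for the fact that the elements $M^{\otimes d}$ span $\mathrm{Sym}^{(d)}(\mat_3(\bC))$; you instead observe that the six operators $R_i,L_i$ together with $H_1,H_2$ and $I$ already give all of $\Phi_d(\mathit{gl}_3(\bC))$ and invoke Schur--Weyl duality to identify the associative algebra they generate with the full $S_d$-commutant. (Your attention to the $\mathit{gl}_3$-versus-$\mathit{sl}_3$ point is appreciated but not actually needed here: for fixed $d$ the partitions $(r,s,t)\vdash d$ are determined by their $\mathit{sl}_3$-weights $(r-s,s-t)$, so even $U(\mathit{sl}_3)$ surjects onto the commutant.) For the block decomposition, the paper works inside $\mathrm{Sym}^{(d)}(\mat_3(\bC))$ as a $\mat_3(\bC)$-bimodule, constructs explicit highest-weight elements $h$ built from $e_{1,3}$, $f$ and $g$, shows each $RhR$ is a full matrix algebra of size $d_{\ell,m}$, and closes the count with the combinatorial identity $\frac14\sum d_{\ell,m}^2\cdot 4=\binom{d+8}{8}$; you read the same decomposition directly off $(\bC^3)^{\otimes d}\cong\bigoplus_\mu S^\mu\otimes V^\mu$, which makes both the identity and the explicit highest-weight vectors unnecessary and leaves only the reparametrization $(r,s,t)\leftrightarrow(\ell,m)$ and the Weyl dimension formula. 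Your version is shorter and more conceptual; the paper's buys self-containedness and explicit generators of the simple two-sided ideals, which have independent interest. Two minor points in your favor: your remark that $\Lambda$ must be read with multiplicity is a genuine (if cosmetic) correction to the statement, and your caveat about transpose conventions for $A$ versus $A^\top$ is warranted --- the paper's own identification $e_{1,3}=e_{1,1}A^{(1)}$ presupposes a row-vector convention --- but since $A$ and $A^\top$ enter symmetrically this does not affect either argument.
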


\begin{lemma}
Let $\cT_1$ be the Terwilliger algebra of $H^*(1,3)$. Then $\cT_1 \simeq \mat_3(\bC)$.
\end{lemma}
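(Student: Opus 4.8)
The plan is to verify this base case $d=1$ by an explicit computation on three vertices. First I would unwind the definitions: for $d=1$ the digraph $H^*(1,3)$ is the directed $3$-cycle, with vertex set $X=\FF_3=\{0,1,2\}$ and arcs $0\to1\to2\to0$, so the standard module is $V=\bC^{3}$ and, after ordering $X$, we identify $\xmat$ with $\mat_3(\bC)$. Taking $x=0$ as base vertex, one computes $\tilde{\partial}(0,0)=(0,0)$, $\tilde{\partial}(0,1)=(1,2)$ and $\tilde{\partial}(0,2)=(2,1)$, hence $\Delta=\{(0,0),(1,2),(2,1)\}$ and the three idempotents $E^*_{0,0}=E_{00}$, $E^*_{1,2}=E_{11}$, $E^*_{2,1}=E_{22}$ are exactly the diagonal matrix units of $\mat_3(\bC)$, mutually orthogonal and summing to $I$.

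Next I would note that $A$ is the permutation matrix of a $3$-cycle on $X$ and $A^{\top}$ is its inverse, so between them $A$ and $A^{\top}$ occupy all six off-diagonal positions. Sandwiching each of $A$ and $A^{\top}$ between the three idempotents then recovers every off-diagonal matrix unit $E_{ij}$ ($i\ne j$) as exactly one of the products $E^*_{i}AE^*_{j}$ or $E^*_{i}A^{\top}E^*_{j}$ (the other being the zero matrix). Together with the diagonal matrix units $E^*_{0,0},E^*_{1,2},E^*_{2,1}$, this shows that $\cT_1$ contains all nine matrix units $E_{ij}$ with $0\le i,j\le 2$, and hence $\cT_1=\mat_3(\bC)$. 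Since $\mathrm{Sym}^{(1)}(\mat_3(\bC))=\mat_3(\bC)$, this is the $d=1$ case of Theorem~\ref{thm:uchida}.

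I do not expect any real obstacle: the whole argument is a finite verification. The only points needing a little care are the bookkeeping of the three two-way distances (equivalently, checking $E^*_{1,2}=E^*_{[1,0]}$ and $E^*_{2,1}=E^*_{[0,1]}$ via the conversion $s=(2j-i)/3$, $t=(2i-j)/3$) and keeping the transpose convention for $A$ straight, so that for each pair $i\ne j$ the correct one of $E^*_iAE^*_j$, $E^*_iA^{\top}E^*_j$ is the nonzero matrix unit. One could alternatively read off $\cT_1=\mat_3(\bC)$ from the identification $\cT(x)=\langle L_1,L_2,R_1,R_2\rangle$ established in Section~3: for $d=1$ the operators $L_1,L_2,R_1,R_2$ and $H_1,H_2$ are themselves matrix units or diagonal matrices and already span $\mat_3(\bC)$. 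The direct verification above is, however, the cleanest.
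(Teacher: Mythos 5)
Your proof is correct and follows essentially the same route as the paper: identify $E^*_{0,0}, E^*_{1,2}, E^*_{2,1}$ with the diagonal matrix units and then recover the six off-diagonal matrix units from $A$ and $A^{\top}$ (the paper just phrases the latter step via the observation ${A^{(1)}}^2 = {A^{(1)}}^{\top}$ rather than by explicitly sandwiching between the idempotents). No issues.
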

\begin{proof}
This is trivial. If $A^{(1)}$ is the adjacent matrix of $H^*(1,3)$, then ${A^{(1)}}^2 = {A^{(1)}}^\top$, and ${{E}^{(1)}}^*_{0,0}$, ${{E}^{(1)}}^*_{1,2}$ and ${{E}^{(1)}}^*_{2,1}$ are diagonal matrix units $e_{1,1}$, $e_{2,2}$ and $e_{3,3}$. Hence, every matrix unit is in $\cT_1$. 
\end{proof}

\medskip
Note that if the principal module of dimension $n$ is the only irreducible algebra module, it is isomorphic to $\nmat$.

\medskip
In the rest of this section, for  $i,j\in \{1,2,3\}$, $e_{i,j}$ denotes a matrix unit in $\mat_3(\bC)$, i.e., a three by three matrix with one in the $(i,j)$ entry and zero otherwise.

\medskip
For $r\leq d$, and given matrices $X_1, X_2, \ldots, X_\ell \in \nmat$ and positive integers $r_1, \ldots, r_\ell$ with $r_1 + \cdots + r_\ell \leq d$, let $L_{r_1, \ldots, r_\ell}(X_1, X_2, \ldots, X_\ell)$ denote the symmetrization of 
$$Y_1\otimes Y_2 \otimes \cdots \otimes Y_d \in \mat_{dn}(\bC),$$
where 
$$Y_i = \begin{cases} I & \text{if } 1\leq i \leq r_0,\\
X_j & \text{if } r_0 + \cdots + r_{j-1} < i \leq r_0 + r_1 + \cdots + r_j,\end{cases}$$
and $r_0 = d-(r_1+\cdots + r_\ell)$. Set $L(X) = L_1(X)$.

In particular, $L(I_n) = I_{dn}$, and for $Z\in \nmat$,  $L(Z) = L_d(Z)$ and
\begin{align*} 
L_d(Z) &= Z\otimes I \otimes \cdots \otimes I + I\otimes Z \otimes \cdots \otimes I + \cdots + I\otimes I \otimes \cdots \otimes Z\\
& \in \text{Sym}^{(d)}(\nmat) \subseteq \mat_{nd}(\bC).
\end{align*}

Let $A^{(m)}$ denote the adjacency matrix of $H^*(m,3)$ in $\mat_{3m}(\bC)$. Similarly, $X^{(m)}$ denotes the object of $H^*(m,3)$ in $\mat_{3m}(\bC)$ corresponding to $X$ for $H^*(d,3)$.  In our case, $A = A^{(d)} = L(A^{(1)})$ is the adjacency matrix of $H^*(d,3)$.

\begin{lemma}\label{lemma:a,eij}
The following holds.
\begin{itemize}
\item[{\rm (i)}] $A =A^{(d)} = L(A^{(1)})$ and $A^\top ={A^{(d)}}^\top = L({A^{(1)}}^\top)$.
\item[{\rm (ii)}] $E^*_{[s,t]} = L_{r,s,t}({E^*_{[0,0]}}^{(1)},{E^*_{[1,2]}}^{(1)},{E^*_{[2,1]}}^{(1)})$, where $r = d-s-t$.
\end{itemize}
\end{lemma}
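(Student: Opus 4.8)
The plan is to verify both identities by unwinding the definition of the symmetrization operator $L_{r_1,\dots,r_\ell}$ and comparing the action of both sides on the standard basis $\{\hat y\mid y\in X\}$, where we identify $V=\bC^{|X|}$ with the $d$-fold tensor product $(\bC^3)^{\otimes d}$ via $\hat y = \hat y_1\otimes\cdots\otimes\hat y_d$ for $y=(y_1,\dots,y_d)\in\FF_3^d$. Under this identification, the $i$-th tensor factor carries the data of $H^*(1,3)$ in the $i$-th coordinate, and the symmetrization in the definition of $\mathrm{Sym}^{(d)}(\mat_3(\bC))$ is exactly the averaging over permutations of coordinates of $X$, which is the natural $S_d$-action on $\FF_3^d$.

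For (i): by definition $L(A^{(1)}) = L_d(A^{(1)}) = \sum_{i=1}^d I\otimes\cdots\otimes A^{(1)}\otimes\cdots\otimes I$, with $A^{(1)}$ in the $i$-th slot. Acting on $\hat y = \hat y_1\otimes\cdots\otimes \hat y_d$, the $i$-th summand sends it to $\hat y_1\otimes\cdots\otimes(A^{(1)}\hat y_i)\otimes\cdots\otimes\hat y_d$. Since $A^{(1)}\hat y_i = \widehat{y_i+1}$ (the single arc $y_i\to y_i+1$ in $H^*(1,3)$), this summand is $\widehat{y'}$ where $y'$ agrees with $y$ except $y'_i = y_i+1$. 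Summing over $i$ gives exactly $\sum_{y'}\hat{y'}$ over all $y'$ obtained from $y$ by adding $1$ in a single coordinate, which by Definition 1.2 is precisely $A\hat y$. Hence $A = L(A^{(1)})$; transposing (or repeating the argument with the reversed arc) gives $A^\top = L({A^{(1)}}^\top)$. One should also note $L$ is an algebra homomorphism onto $\mathrm{Sym}^{(d)}(\mat_3(\bC))$, so the right-hand sides genuinely lie in that algebra.

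For (ii): recall from the preamble that ${E^*_{[0,0]}}^{(1)} = e_{1,1}$, ${E^*_{[1,2]}}^{(1)} = e_{2,2}$, ${E^*_{[2,1]}}^{(1)} = e_{3,3}$ as diagonal idempotents of $\mat_3(\bC)$ (projecting onto $\hat 0$, $\hat 1$, $\hat 2$ respectively). By the definition of $L_{r,s,t}$ with $r = d-s-t$, the operator $L_{r,s,t}(e_{1,1},e_{2,2},e_{3,3})$ is the symmetrization over all ways of placing $r$ copies of $e_{1,1}$, $s$ copies of $e_{2,2}$, and $t$ copies of $e_{3,3}$ among the $d$ tensor slots (the slots not receiving one of these receiving $I$ — but here $r+s+t=d$, so every slot is filled). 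Acting on $\hat y$, each term in this symmetrization is nonzero precisely when the coordinate pattern of $y$ matches the chosen placement: it has exactly $r$ coordinates equal to $0$, $s$ equal to $1$, $t$ equal to $2$, in which case exactly one placement matches and returns $\hat y$ itself, all others returning $0$. Thus $L_{r,s,t}(e_{1,1},e_{2,2},e_{3,3})\hat y = \hat y$ if $y\in X_{[s,t]}$ and $\0$ otherwise, which is exactly the definition of the diagonal idempotent $E^*_{[s,t]}$.

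The routine part is keeping the combinatorics of the symmetrization bookkeeping straight; the only genuine subtlety — and the main point to get right — is checking that the symmetrization built into $\mathrm{Sym}^{(d)}(\mat_3(\bC))$ corresponds exactly to the $S_d$-orbit structure on the vertex set $X=\FF_3^d$ under the basis identification $V\cong(\bC^3)^{\otimes d}$, i.e. that the normalization/averaging in $L_{r_1,\dots,r_\ell}$ is set up so that the images are honest elements of $\cT(x)$ and match the $E^*$'s and $A$ on the nose rather than up to a scalar. Once that identification is fixed, both (i) and (ii) follow by evaluating on basis vectors as above.
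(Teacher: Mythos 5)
Your proof is correct and follows essentially the same route as the paper: identify $V$ with $(\bC^3)^{\otimes d}$ and verify both identities by direct bookkeeping with the symmetrized tensors (the paper dismisses (i) as immediate from the definition and proves (ii) by inductively replacing $e_{1,1}$'s, while you evaluate on basis vectors, but these are the same routine verification). The only nit is the direction convention in (i) — with $(A)_{ij}=1$ for an arc $i\to j$ one gets $A^{(1)}\hat{a}=\widehat{a-1}$ rather than $\widehat{a+1}$ — but since the same convention governs both sides of $A=L(A^{(1)})$, this does not affect the argument.
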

\begin{proof}
{\rm (i)} This is clear from the definition of $H^*(d,3)$.

\medskip
{\rm (ii)} First we note that ${E^*_{[0,0]}}^{(1)} = e_{1,1}$, ${E^*_{[1,2]}}^{(1)} = e_{2,2}$ and ${E^*_{[2,1]}}^{(1)} = e_{3,3}$.

By definition, $E^*_{[0,0]} = e_{1,1}\otimes e_{1,1}\otimes \cdots \otimes e_{1,1} = L_{d}(e_{1,1}) = L_d({E^*_{[0,0]}}^{(1)})$.

If we replace one of $e_{1,1}$ by $e_{2,2}$, by taking its symmetrization, we have
$$E^*_{1,2} = E^*_{[1,0]} = L_{d-1,1}(e_{1,1},e_{2,2}),$$
which is a symmetrization of 
$$(E^*_{[0,0]})^{(d-1)}\otimes {E^*_{[1,2]}}^{(1)} = e_{1,1}\otimes e_{1,1}\otimes \cdots \otimes e_{1,1}\otimes e_{2,2}.$$
Similarly, 
$$E^*_{2,1} = E^*_{[0,1]} = L_{d-1,1}(e_{1,1},e_{3,3}),$$
which is a symmetrization of 
$$(E^*_{[0,0]})^{(d-1)}\otimes {E^*_{[2,1]}}^{(1)} = e_{1,1}\otimes e_{1,1}\otimes \cdots \otimes e_{1,1}\otimes e_{3,3}.$$
Now, we can proceed inductively to obtain the following by replacing one of $e_{1,1}$ by $e_{2,2}$, when one of the zeros is replaced by one, and one of $e_{1,1}$ by $e_{3,3}$, when one of the zeros is replaced by two. Therefore,
$$E^*_{[s,t]} = L_{r,s,t}({E^*_{[0,0]}}^{(1)},{E^*_{[1,2]}}^{(1)},{E^*_{[2,1]}}^{(1)}), $$
which is a symmetrization of 
$$e_{1,1}\otimes e_{1,1}\otimes \cdots \otimes e_{1,1}\otimes e_{2,2}\otimes \cdots \otimes e_{2,2}\otimes e_{3,3}\otimes \cdots \otimes e_{3,3}$$
with $r = d-s-t$ $e_{1,1}$'s, $s$ $e_{2,2}$'s and $t$ $e_{3,3}$'s, as desired.
\end{proof}

\begin{lemma}
For every $M\in \mat_3(\bC) = \cT(H^*(1,3))$, 
$$L(M) \in \cT(H^*(d,3)) \subseteq \mat_{3d}(\bC).$$
\end{lemma}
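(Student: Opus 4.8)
The plan is to exploit that $L=L_{d}$ is much more than a linear map: it is a homomorphism of Lie algebras $\mat_{3}(\bC)\to\mat_{3^{d}}(\bC)$, whereas $\cT=\cT(H^{*}(d,3))$, being an associative matrix algebra, is automatically closed under the Lie bracket $[X,Y]=XY-YX$. First I would record the (standard) fact that
$$L(M)=L_{d}(M)=\sum_{k=1}^{d}I^{\otimes(k-1)}\otimes M\otimes I^{\otimes(d-k)}=\sum_{k=1}^{d}\rho_{k}(M),$$
where each $\rho_{k}\colon\mat_{3}(\bC)\to\mat_{3^{d}}(\bC)$ is a unital $\bC$-algebra homomorphism and $\rho_{k}(M)$, $\rho_{\ell}(N)$ commute for $k\neq\ell$ (they act on different tensor factors). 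A one-line computation then gives $[L(M),L(N)]=\sum_{k}[\rho_{k}(M),\rho_{k}(N)]=\sum_{k}\rho_{k}([M,N])=L([M,N])$, so $L$ is a Lie homomorphism. Consequently, if $S$ is any set that generates $\mat_{3}(\bC)$ as a Lie algebra, then $L(\mat_{3}(\bC))$ is exactly the Lie subalgebra of $\mat_{3^{d}}(\bC)$ generated by $L(S)$; hence, to finish it suffices to exhibit such an $S$ with $L(S)\subseteq\cT$.

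For $S$ I would take $\{I_{3},\,A^{(1)},\,(A^{(1)})^{\top},\,h_{1},\,h_{2}\}$ with $h_{1}=e_{1,1}-e_{2,2}$ and $h_{2}=e_{2,2}-e_{3,3}$. By Lemma~\ref{lemma:a,eij}(i) we have $L(A^{(1)})=A\in\cT$ and $L((A^{(1)})^{\top})=A^{\top}\in\cT$; clearly $L(I_{3})=d\,I_{3^{d}}\in\cT$; and a direct check of the action on basis vectors gives $L(h_{1})=\sum_{s,t}(r-s)E^{*}_{[s,t]}=H_{1}$ and $L(h_{2})=\sum_{s,t}(s-t)E^{*}_{[s,t]}=H_{2}$, which lie in $\cT$ because each $E^{*}_{[s,t]}$ does (cf. Lemma~\ref{lemma:a,eij}(ii)). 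It remains to verify that $S$ really generates $\mat_{3}(\bC)$ as a Lie algebra. Since $\mat_{3}(\bC)=\bC I_{3}\oplus\mathit{sl}_{3}(\bC)$ and $I_{3}$ is central, this reduces to checking that $A^{(1)}$, $(A^{(1)})^{\top}$, $h_{1}$, $h_{2}$ Lie-generate $\mathit{sl}_{3}(\bC)$: bracketing the cyclic sums $A^{(1)}=e_{1,2}+e_{2,3}+e_{3,1}$ and $(A^{(1)})^{\top}=e_{2,1}+e_{3,2}+e_{1,3}$ with $h_{1}$ and $h_{2}$ separates out the individual off-diagonal matrix units (for instance $3e_{1,2}=A^{(1)}+[h_{1},A^{(1)}]$, and similarly for the remaining five), and these together with $h_{1},h_{2}$ span $\mathit{sl}_{3}(\bC)$.

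Combining the two steps: $L(\mat_{3}(\bC))$ is the Lie subalgebra of $\mat_{3^{d}}(\bC)$ generated by $\{d\,I_{3^{d}},A,A^{\top},H_{1},H_{2}\}$; all five of these lie in $\cT$, and $\cT$ is closed under the Lie bracket, so $L(\mat_{3}(\bC))\subseteq\cT$. In particular $L(M)\in\cT(H^{*}(d,3))$ for every $M\in\mat_{3}(\bC)=\cT(H^{*}(1,3))$, which is the assertion. One could instead bypass the Lie-generation bookkeeping entirely and argue by linearity on the nine matrix units: $L$ of a diagonal unit $e_{a,a}$ is the diagonal matrix counting the coordinates in a fixed residue class, hence a $\bC$-combination of the $E^{*}_{[s,t]}$, and $L$ of an off-diagonal unit is one of $R_{1},R_{2},R_{3},L_{1},L_{2},L_{3}$ once the dictionary between coordinates and $\FF_{3}$ is fixed; all of these are manifestly in $\cT$.

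I expect the only genuine work to be the last verification in the second paragraph — that the chosen five matrices Lie-generate $\mat_{3}(\bC)$ — together with keeping the symbol $\leftrightarrow$ coordinate correspondence straight when identifying $L(h_{i})$ with $H_{i}$ (equivalently, $L(e_{a,b})$ with the named raising/lowering operators); both are routine, and everything else is formal.
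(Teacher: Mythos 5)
Your proof is correct, but it takes a genuinely different route from the paper's. The paper works in two hands-on stages: first a downward induction on $r+s+t$, using the resolution $e_{1,1}+e_{2,2}+e_{3,3}=I_3$ and the generators $E^*_{[s,t]}$, to show each $L(e_{a,a})\in\cT$; then a product manipulation, $L(e)L(M)=L(eM)+L_{1,1}(e,M)$ and $L(M)L(e)=L(Me)+L_{1,1}(e,M)$, whose differences isolate $L(eM-Me)$ and then $L(eM+Me)$, giving the off-diagonal units, and finally linearity. You instead observe that $L=\sum_k\rho_k$ is a Lie-algebra homomorphism and that $\cT$, being an associative subalgebra, is closed under the bracket, so it suffices that $L$ of a Lie-generating set of $\mat_3(\bC)$ lands in $\cT$; your set $\{I_3,A^{(1)},(A^{(1)})^\top,h_1,h_2\}$ does the job since its images are $dI$, $A$, $A^\top$, $H_1$, $H_2$, and the generation check is a routine root-space computation (your identity $3e_{1,2}=A^{(1)}+[h_1,A^{(1)}]$ checks out). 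Your argument is shorter and makes explicit what secretly powers the paper's step for the off-diagonal units: the cancellation of the cross-terms $L_{1,1}(e,M)$ is exactly the statement $[L(X),L(Y)]=L([X,Y])$. What the paper's longer computation buys is the auxiliary fact that the partial symmetrizations $L_{r,s,t}(e_{1,1},e_{2,2},e_{3,3})$ lie in $\cT$, and its product identity $L(X)L(Y)=L(XY)+L_{1,1}(X,Y)$ is reused verbatim in the subsequent lemma on $M^{\otimes d}$, so the bookkeeping is not wasted there. Two trivial quibbles: $E^*_{[s,t]}\in\cT$ by definition (they are generators), so the citation of Lemma~\ref{lemma:a,eij}(ii) for that point is unnecessary; and the paper's normalization $L(I_3)=I_{3^d}$ differs from your linear $L(I_3)=dI_{3^d}$, which is immaterial since both lie in $\cT$.
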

\begin{proof}
We first prove $L(M) \in \cT(H^*(d,3))$ for the generators $A^{(1)}$, ${A^{(1)}}^\top$, ${E^*_{[0,0]}}^{(1)}$, ${E^*_{[1,2]}}^{(1)}$, and ${E^*_{[2,1]}}^{(1)}$.

For $A^{(1)}$ and ${A^{(1)}}^\top$, the assertion follows from Lemm~\ref{lemma:a,eij}.

Since 
$${E^*_{[0,0]}}^{(1)} + {E^*_{[1,2]}}^{(1)} + {E^*_{[2,1]}}^{(1)} = e_{1,1} + e_{2,2} + e_{3,3} = I_3,$$
\begin{align*}
\text{LHS} & = I_3 \otimes e_{1,1} \otimes e_{1,1}\otimes \cdots \otimes e_{1,1}\otimes e_{2,2}\otimes \cdots \otimes e_{2,2}\otimes e_{3,3}\otimes \cdots \otimes e_{3,3}\\
& = e_{1,1} \otimes e_{1,1} \otimes e_{1,1}\otimes \cdots \otimes e_{1,1}\otimes e_{2,2}\otimes \cdots \otimes e_{2,2}\otimes e_{3,3}\otimes \cdots \otimes e_{3,3} \\
& \qquad + e_{2,2} \otimes e_{1,1} \otimes e_{1,1}\otimes \cdots \otimes e_{1,1}\otimes e_{2,2}\otimes \cdots \otimes e_{2,2}\otimes e_{3,3}\otimes \cdots \otimes e_{3,3}\\
& \qquad + e_{3,3} \otimes e_{1,1} \otimes e_{1,1}\otimes \cdots \otimes e_{1,1}\otimes e_{2,2}\otimes \cdots \otimes e_{2,2}\otimes e_{3,3}\otimes \cdots \otimes e_{3,3}.
\end{align*}
Thus, if $d\geq 2$, by setting $r = (d-1)-s-t$, the symmetrization of the LHS becomes
\begin{align*}
L_{r,s,t}(e_{1,1},e_{2,2},e_{3,3})  & \in  \text{Span}\{L_{r+1,s,t}(e_{1,1},e_{2,2},e_{3,3}), \; L_{r,s+1,t}(e_{1,1},e_{2,2},e_{3,3}), \\
& \qquad L_{r,s,t+1}(e_{1,1},e_{2,2},e_{3,3})\}\\
& \subseteq \cT(H^*(d,3)).
\end{align*}
Hence, $L_{r,s,t}(e_{1,1},e_{2,2},e_{3,3}) \in \cT(H^*(d,3))$ if $r+s+t = d-1$.

We can proceed inductively to show 
$$L_{r,s,t}(e_{1,1},e_{2,2},e_{3,3}) \in \cT(H^*(d,3)) \; \text{ if } r+s+t = 1.$$

Therefore, 
$$L({E^*_{[0,0]}}^{(1)}), \; L({E^*_{[1,2]}}^{(1)}), \; L({E^*_{[2,1]}}^{(1)})\in \cT(H^*(d,3)).$$ 

\medskip
Next, we show that for every matrix unit $e_{i,j}$ $(i,j\in \{1,2,3\})$ of $\mat_3(\bC) = \cT(H^*(1,3))$, $L(e_{i,j})\in \cT(H^*(d,3))$. We are done for $e_{1,1}$, $e_{2,2}$ and $e_{3,3}$.

Recall that, $e_{1,2} = e_{1,1}{A^\top}^{(1)}$, $e_{1,3} = e_{1,1}{A}^{(1)}$, $e_{2,1} = e_{2,2}{A}^{(1)}$, $ e_{2,3} = e_{2,2}{A^\top}^{(1)}$, $e_{3,1} = e_{3,3}{A^\top}^{(1)}$, $e_{3,2} = e_{3,3}{A}^{(1)}$. 

Let $e\in \{e_{1,1}, e_{2,2}, e_{3,3}\}$ and $M\in \{A^{(1)}, {A^\top}^{(1)}\}$. Note that $ee = e$ and $eMe = O$. We show $L(eM)\in \cT(H^*(d,3))$.

\begin{align*}
L(e)L(M) & = L(eM) + L_{1,1}(e,M),\\
L(M)L(e) & = L(Me) + L_{1,1}(e,M).
\end{align*}
Hence, by subtracting, we have $L(eM-Me) = L(eM)- L(Me) \in \cT(H^*(d,3))$.

\begin{align*}
L(eM-Me)L(e) & = -L(Me) + L_{1,1}(eM-Me,e),\\
L(e)L(eM-Me) & = L(eM) + L_{1,1}(eM-Me,e).
\end{align*}
By subtracting, we have $L(eM+Me) = L(eM)+L(Me)  \in \cT(H^*(d,3))$. 

Therefore, $L(eM), L(Me)\in \cT(H^*(d,3))$.

Since every matrix in $\mat_3(\bC)$ can be written as a linear combination of the matrix units, $L(M) \in \cT(H^*(d,3))$ for every $M \in \mat_3(\bC) = \cT(H^*(1,3))$.
\end{proof}

\medskip
The following is in \cite[Page 7]{lmp}. We include the same for completeness.

\begin{lemma}\label{lemma:zd}
For each $M\in \mat_3(\bC)$, we have
$$M^{\otimes d} = M\otimes M \otimes \cdots \otimes M \in \cT(H^*(d,3)).$$
\end{lemma}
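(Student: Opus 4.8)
The plan is to bootstrap from the preceding lemma. It gives $L(N)\in\cT:=\cT(H^*(d,3))$ for every $N\in\mat_3(\bC)$, and since $\cT$ is an algebra every product $L(N_1)L(N_2)\cdots L(N_k)$ with $N_i\in\mat_3(\bC)$ also lies in $\cT$. I will recover $M^{\otimes d}$ from such products by a triangular inversion indexed by the partitions of $d$ — this is the classical argument that power sums generate the ring of symmetric functions, and it is the argument referenced from \cite[p.~7]{lmp}.

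For a partition $\lambda=(\lambda_1\ge\cdots\ge\lambda_\ell)$ of $d$, put $P_\lambda(M):=L_{1,\dots,1}(M^{\lambda_1},\dots,M^{\lambda_\ell})$, the symmetrization of $M^{\lambda_1}\otimes\cdots\otimes M^{\lambda_\ell}\otimes I^{\otimes(d-\ell)}$; in particular $P_{(d)}(M)=L(M^d)$ and $P_{(1,\dots,1)}(M)=M^{\otimes d}$. Writing $(N)_{(p)}$ for the matrix equal to $N$ in the $p$-th tensor slot and $I$ elsewhere, so that $L(N)=\sum_{p=1}^d(N)_{(p)}$, and expanding the product using that matrices in distinct slots commute while those in a common slot multiply, I obtain
$$L(M^{\lambda_1})\cdots L(M^{\lambda_\ell})=\sum_{(j_1,\dots,j_\ell)\in\{1,\dots,d\}^{\ell}}\ \bigotimes_{p=1}^{d}M^{\sum_{i\,:\,j_i=p}\lambda_i}\qquad(M^0:=I).$$
Grouping the summands by their multiset of exponents yields $L(M^{\lambda_1})\cdots L(M^{\lambda_\ell})=c_{\lambda\lambda}P_\lambda(M)+\sum_{\mu}c_{\lambda\mu}P_\mu(M)$, where each $\mu$ appearing in the sum is obtained from $\lambda$ by merging the parts routed to a common slot and hence has strictly fewer parts than $\lambda$, the $c_{\lambda\mu}$ are nonnegative integers, and $c_{\lambda\lambda}=\prod_{k}m_k(\lambda)!\ne0$ ($m_k(\lambda)$ being the multiplicity of $k$ among the parts; this term is the contribution of the injective tuples $(j_1,\dots,j_\ell)$).

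The lemma then follows by induction on the number $\ell$ of parts. For $\ell=1$ we have $P_{(d)}(M)=L(M^d)\in\cT$, since $M^d\in\mat_3(\bC)$. For $\ell>1$, assuming $P_\mu(M)\in\cT$ for every partition $\mu$ of $d$ with fewer than $\ell$ parts, the identity above exhibits $c_{\lambda\lambda}P_\lambda(M)$ as $L(M^{\lambda_1})\cdots L(M^{\lambda_\ell})\in\cT$ minus a linear combination of such $P_\mu(M)$; as $c_{\lambda\lambda}\ne0$, this forces $P_\lambda(M)\in\cT$. Taking $\lambda=(1,\dots,1)$ gives $M^{\otimes d}=P_{(1,\dots,1)}(M)\in\cT$, as desired. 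The hard part will be the combinatorial bookkeeping behind the displayed expansion — tracking which tuples $(j_1,\dots,j_\ell)$ produce which $P_\mu(M)$ and checking that the diagonal coefficient $c_{\lambda\lambda}$ is nonzero; everything else is formal.
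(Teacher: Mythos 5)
Your argument is correct, and it reaches the conclusion by a genuinely different (though closely related) route from the paper's. The paper also starts from $L(N)\in\cT(H^*(d,3))$ for all $N\in\mat_3(\bC)$, but it inducts on $j$ to show $L_j(M)\in\cT(H^*(d,3))$ directly: at each step it writes down a chain of two-factor identities $L(M^k)L_{j+1-k}(M)=L_{j-k,1}(M,M^{k+1})+L_{j+1-k,1}(M,M^{k})$ for $k=1,\dots,j$ and eliminates the intermediate terms $L_{a,1}(M,M^{b})$ by working backwards through the chain --- in effect Newton's recursion expressing the elementary symmetric functions through power sums, so that only shapes with at most two distinct powers of $M$ ever appear. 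You instead expand the full product $L(M^{\lambda_1})\cdots L(M^{\lambda_\ell})$ for an arbitrary partition $\lambda$ of $d$ and invert a triangular system over the partition lattice, inducting on the number of parts; your two key observations --- that the injective slot-assignments contribute the diagonal term with coefficient $\prod_k m_k(\lambda)!\neq 0$, and that every non-injective assignment merges parts and hence produces a partition of $d$ with strictly fewer parts --- are both correct and are exactly what the induction needs. Your version proves more (it yields $P_\mu(M)\in\cT(H^*(d,3))$ for every partition $\mu$ of $d$, not only $\mu=(1,\dots,1)$) at the cost of heavier combinatorial bookkeeping, while the paper's chain keeps each verification to a product of two operators. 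One small caution for the write-up: the numerical coefficients in your displayed expansion (and in the paper's identities, for that matter) depend on the normalization adopted for the symmetrization, so when you pin down $c_{\lambda\mu}$ you should fix that convention explicitly; since all you ultimately use is $c_{\lambda\lambda}\neq 0$, nothing in the argument breaks.
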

\begin{proof}
By the previous lemma, we have that $L(M)\in \cT(H^*(d,3))$ for each $M\in \mat_3(\bC)$. 

By induction on $j$, we show $L_j(M)\in \cT(H^*(d,3))$ for every $M\in \mat_3(\bC)$. 
Assume it is true for $j\geq 1$. Then
\begin{align*}
L(M)L_j(M) & = L_{j-1,1}(M,M^2) + L_{j+1}(M)\\
L(M^2)L_{j-1}(M) & = L_{j-2,1}(M,M^3) + L_{j-1,1}(M,M^2)\\
L(M^3)L_{j-2}(M) & = L_{j-3,1}(M,M^4) + L_{j-2,1}(M,M^3)\\
 & \vdots\\
L(M^{j-1})L_2(M) & = L_{1,1}(M,M^j) + L_{2,1}(M,M^{j-1})\\
L(M^j)L(M) & = L(M^{j+1}) + L_{1,1}(M,M^j).
\end{align*}
Since $L(M^j)L(M)\in \cT(H^*(d,3))$ and $L(M^{j+1}) \in \cT(H^*(d,3))$, $L_{1,1}(M,M^j) \in \cT(H^*(d,3))$. Proceeding backwards, we have $L_{j+1}(M) \in \cT(H^*(d,3))$ as desired.
\end{proof}

\begin{prop}
The following hold.
$$\cT(H^*(d,3)) \simeq \mathrm{Sym}^{(d)}(\mat_3(\bC)).$$
\end{prop}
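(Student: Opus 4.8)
The plan is to prove the equality $\cT(H^*(d,3)) = \mathrm{Sym}^{(d)}(\mat_3(\bC))$ as subalgebras of $\mat_{3^d}(\bC) = \mat_3(\bC)^{\otimes d}$, under the identification of the standard module $V = \bC^{|X|}$ with $(\bC^3)^{\otimes d}$ in which the $i$-th tensor factor records the $i$-th coordinate of a vertex (this is the identification already in force in this section, in which $A = A^{(d)} = L(A^{(1)})$). Here $\mathrm{Sym}^{(d)}(\mat_3(\bC))$ is the algebra $(\mat_3(\bC)^{\otimes d})^{S_d}$ of $S_d$-symmetric tensors; since $S_d$ acts on $\mat_3(\bC)^{\otimes d}$ by permuting the tensor factors and hence by algebra automorphisms, its fixed set is a unital subalgebra of $\mat_{3^d}(\bC)$. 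It therefore suffices to prove the two inclusions separately.

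For $\cT(H^*(d,3)) \subseteq \mathrm{Sym}^{(d)}(\mat_3(\bC))$, I would check only the generators, since the right-hand side is a subalgebra. By Lemma~\ref{lemma:a,eij} we have $A = L(A^{(1)})$, $A^\top = L({A^{(1)}}^\top)$, and $E^*_{[s,t]} = L_{r,s,t}({E^*_{[0,0]}}^{(1)}, {E^*_{[1,2]}}^{(1)}, {E^*_{[2,1]}}^{(1)})$ with $r = d-s-t$; each of these operators is, by its very construction, the symmetrization of a pure tensor and hence lies in $(\mat_3(\bC)^{\otimes d})^{S_d}$. So $A, A^\top, E^*_{i,j} \in \mathrm{Sym}^{(d)}(\mat_3(\bC))$, and the inclusion follows.

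For the reverse inclusion $\mathrm{Sym}^{(d)}(\mat_3(\bC)) \subseteq \cT(H^*(d,3))$, I would use that, in characteristic $0$, the space of symmetric tensors is linearly spanned by the pure powers $M^{\otimes d}$, $M \in \mat_3(\bC)$: this is the polarization identity $\sum_{\sigma \in S_d} M_{\sigma(1)} \otimes \cdots \otimes M_{\sigma(d)} = \sum_{\emptyset \ne S \subseteq \{1,\dots,d\}} (-1)^{d-|S|}(\sum_{i \in S} M_i)^{\otimes d}$, whose left-hand sides span $(\mat_3(\bC)^{\otimes d})^{S_d}$ as the $M_i$ vary (the symmetrization projection being surjective onto the fixed subspace). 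By Lemma~\ref{lemma:zd}, $M^{\otimes d} \in \cT(H^*(d,3))$ for every $M \in \mat_3(\bC)$, so $\mathrm{Sym}^{(d)}(\mat_3(\bC))$, being the span of these, is contained in $\cT(H^*(d,3))$. Combining the two inclusions gives the asserted isomorphism, indeed an equality.

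I do not expect a genuine obstacle here: all the substantive work was carried out in Lemmas~\ref{lemma:a,eij} and~\ref{lemma:zd}, and the remaining ingredients are elementary — that the $S_d$-fixed subspace of $\mat_3(\bC)^{\otimes d}$ is a subalgebra, and that it is spanned by $d$-th tensor powers via polarization. If one prefers to avoid polarization, an alternative for the reverse inclusion is to show directly that every $L_{r_1,\dots,r_\ell}(X_1,\dots,X_\ell)$ lies in $\cT(H^*(d,3))$, by first treating matrix units $X_i = e_{a,b}$ with the telescoping/subtraction argument already used in the proof of Lemma~\ref{lemma:zd} and then extending by multilinearity; but the polarization route is shorter.
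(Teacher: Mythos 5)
Your proof is correct and follows essentially the same route as the paper: the forward inclusion by checking that the generators $A$, $A^\top$, $E^*_{i,j}$ are symmetrizations (Lemma~5.4), and the reverse inclusion from $M^{\otimes d}\in\cT(H^*(d,3))$ (Lemma~5.6) together with the fact that such tensor powers span the symmetric subalgebra. The only difference is that you spell out the polarization identity explicitly, whereas the paper delegates that spanning fact to the cited reference of Levstein--Maldonado--Penazzi; your version is self-contained and slightly more complete.
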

\begin{proof}
Since the generators of $\cT(H^*(d,3))$, i.e., $A, A^\top, E^*_{i,j}$ are in $\mathrm{Sym}^{(d)}(\mat_3(\bC))$ by Lemma~\ref{lemma:a,eij}.

The result is a direct consequence of Lemma~\ref{lemma:zd} 
in \cite{lmp}. 
\end{proof}

\begin{lemma}\label{lemma: formula}
$$\frac{1}{4}\sum_{\ell = 0}^{[d/3]}\sum_{m = 0}^{[(d-3\ell)/2]}(d-3\ell-2m+1)^2(m+1)^2(d-3\ell-m+2)^2 = \binom{d+8}{d}.$$
\end{lemma}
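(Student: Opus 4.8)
The identity to be established is
$$\frac{1}{4}\sum_{\ell=0}^{[d/3]}\sum_{m=0}^{[(d-3\ell)/2]}(d-3\ell-2m+1)^2(m+1)^2(d-3\ell-m+2)^2 = \binom{d+8}{d}.$$
My plan is to interpret the left-hand side as $\sum_{\lambda}(\dim W_\lambda)^2$ summed over the multiset of highest weights $\lambda=(m_1,m_2)$ occurring for $\cT(H^*(d,3))$, and then to recognize the right-hand side as $\dim\mathrm{Sym}^{(d)}(\mat_3(\bC))$. Concretely, by Lemma~\ref{lemma:parameters} the admissible highest weights are indexed by $\ell=t\in\{0,\dots,[d/3]\}$ and $m=m_2\in\{0,\dots,[(d-3\ell)/2]\}$, with $m_1=d-3\ell-2m$; and by Lemma~\ref{lem:deg} the dimension of the corresponding irreducible module is $\frac12(m_1+1)(m_2+1)(m_1+m_2+2)=\frac12(d-3\ell-2m+1)(m+1)(d-3\ell-m+2)$, whose square is exactly $\frac14$ times the summand. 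So the left-hand side is literally $\sum (\dim W_\lambda)^2$ where each admissible $\lambda$ occurs \emph{once}. On the other hand, $\mathrm{Sym}^{(d)}(\mat_3(\bC))$, as a subalgebra of $\mat_{3^d}(\bC)$ that is semisimple (being $*$-closed), is a direct sum of full matrix algebras $\bigoplus_n\mat_n(\bC)$, and its vector-space dimension is $\sum n^2$ over this decomposition. Thus it suffices to show (a) $\dim\mathrm{Sym}^{(d)}(\mat_3(\bC))=\binom{d+8}{d}$, and (b) the simple summands of $\cT(H^*(d,3))\simeq\mathrm{Sym}^{(d)}(\mat_3(\bC))$ are precisely the $W_\lambda$ above, each with multiplicity one in the \emph{algebra} decomposition.

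For part (a), $\mathrm{Sym}^{(d)}(\mat_3(\bC))$ is the image of $\mat_3(\bC)^{\otimes d}$ under the symmetrization projector associated to the $S_d$-action permuting tensor factors, i.e.\ the $d$-th symmetric power of the $9$-dimensional vector space $\mat_3(\bC)$; hence $\dim\mathrm{Sym}^{(d)}(\mat_3(\bC))=\binom{9+d-1}{d}=\binom{d+8}{d}$. This is immediate. For part (b), one can combine the Lie-algebra results of Sections 3--4: by Proposition~3.4 the $\cT(x)$-modules coincide with the $\cL$-modules for $\cL\simeq sl_3(\bC)$, by Lemma~\ref{lemma:existence} every triple $(r,s,t)$ with $r\ge s\ge t$, $r+s+t=d$ yields a highest weight vector, and by Proposition~\ref{prop:basis} together with the standard semisimplicity of finite-dimensional $sl_3$-representations, the standard module $V$ decomposes into these irreducibles; a semisimple algebra that acts with each of its simple modules appearing in a faithful module has its Wedderburn decomposition indexed exactly by the distinct simple modules appearing, and here the distinct highest weights are exactly the distinct pairs $(m_1,m_2)=(d-3\ell-2m,m)$ as $\ell,m$ range over the stated bounds — these are manifestly distinct, so each appears once in $\bigoplus_n\mat_n(\bC)$. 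Counting dimensions on both sides of $\cT(H^*(d,3))\simeq\bigoplus_n\mat_n(\bC)$ then gives the claimed polynomial identity.

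Alternatively — and this is probably the cleanest self-contained route if one wants to avoid leaning on the full structure theorem — one can prove the identity purely combinatorially by a generating-function argument: show that $\sum_{d\ge0}\binom{d+8}{d}x^d=(1-x)^{-9}$, and show that $\frac14\sum_{d\ge 0}x^d\sum_{\ell,m}(d-3\ell-2m+1)^2(m+1)^2(d-3\ell-m+2)^2$ also equals $(1-x)^{-9}$. For the latter, substitute $n=d-3\ell$ (so the inner double sum over $\ell$ supplies a factor $(1-x^3)^{-1}$ after summing the geometric series in $\ell$), reducing the problem to the single identity
$$\frac14\sum_{n\ge0}y^n\sum_{m=0}^{[n/2]}(n-2m+1)^2(m+1)^2(n-m+2)^2=\frac{1}{(1-y)^8}\cdot(\text{correction})$$
— more precisely one checks $\sum_{d}\binom{d+8}{8}x^d=\frac{1}{(1-x^3)}\cdot\frac{1}{(1-x)^{?}}$ is \emph{not} quite a clean split, so it is easier to just verify the polynomial identity directly by expressing the inner sum over $m$ (with $a=n-2m$ or with $m$ as the variable) in closed form as a polynomial in $n$ via Faulhaber/Bernoulli summation, then summing that polynomial over $\ell$.

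\textbf{Main obstacle.} The conceptual route (parts (a)+(b)) is short but it is logically circular relative to the paper's narrative: Lemma~\ref{lemma: formula} is stated as a lemma \emph{supporting} Theorem~\ref{thm:uchida}, so presumably the intended proof is the elementary computational one, and there the genuine work is the bookkeeping — carrying out the inner sum over $m$ in closed form (a degree-$6$ polynomial summation producing a degree-$7$ polynomial in $d-3\ell$), then the outer sum over $\ell=0,\dots,[d/3]$, and reconciling the floor functions $[(d-3\ell)/2]$ and $[d/3]$ with the target $\binom{d+8}{d}$. The cleanest way to sidestep the floor-function annoyance is the generating-function substitution $n=d-3\ell$ noted above, which makes the $\ell$-sum a single geometric factor $1/(1-x^3)$ and isolates one clean one-variable summation identity to verify; I expect the main effort to be assembling and simplifying that one-variable identity so that the product of generating functions collapses to $(1-x)^{-9}$.
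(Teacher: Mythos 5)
Your second (elementary) route is the one the paper actually takes in spirit, but your write-up stops exactly where the lemma's content begins: you never evaluate the inner sum over $m$, never verify the resulting identity, and you even express doubt about whether the generating-function factorization closes up (``not quite a clean split''). In fact it is clean: setting $n=d-3\ell$ turns the left-hand side's generating function into $\frac{1}{1-x^3}\cdot\frac14\sum_{n\geq 0}P(n)x^n$ with $P(n)=\sum_{m=0}^{[n/2]}(n-2m+1)^2(m+1)^2(n-m+2)^2$, so the whole lemma reduces to the single one-variable statement $\frac14 P(n)=\binom{n+8}{8}-\binom{n+5}{8}$. The paper proves precisely this, phrased instead as an induction on $d$ in steps of $3$: the $\ell\geq 1$ portion of the double sum is the $d-3$ instance of the lemma, hence equals $\binom{d+5}{8}$ by the inductive hypothesis, while the $\ell=0$ portion is evaluated in closed form as $\frac{(d+2)(d+4)(d^5+15d^4+91d^3+279d^2+412d+210)}{1680}$, which is checked to equal $\binom{d+8}{8}-\binom{d+5}{8}$. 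That closed-form evaluation (note that $P$ is a genuine polynomial in $n$ despite the floor $[n/2]$, which is what makes the Faulhaber-type computation uniform) is the entire substance of the lemma, and it is missing from your proposal; as it stands you have a correct plan with the decisive computation left undone.

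Your first route is a genuinely different argument and is worth a comment. Part (a) is fine: $\mathrm{Sym}^{(d)}(\mat_3(\bC))$ is the $d$-th symmetric power of a $9$-dimensional space, of dimension $\binom{d+8}{d}$. Part (b) can indeed be assembled from Lemma 3.6, Lemma 4.4, Proposition 4.3, semisimplicity of $\cT$ (it is closed under conjugate transpose and acts faithfully on $V$), and Proposition 5.7 identifying $\cT$ with $\mathrm{Sym}^{(d)}(\mat_3(\bC))$. But, as you yourself observe, this inverts the paper's logic: Lemma 4.15 is invoked in the proof of Theorem 5.1 precisely to certify by dimension count that the explicitly constructed two-sided ideals $RhR$ exhaust $\mathrm{Sym}^{(d)}(\mat_3(\bC))$, so the authors need it as a self-contained combinatorial identity, not as a corollary of the representation-theoretic decomposition. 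If you want to submit a complete proof in the paper's framework, carry out the summation identity above; the conceptual route should at most be offered as an independent cross-check.
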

\begin{proof}
We prove this by induction. The formula is valid when $d = 0, 1, 2$. Suppose the formula is true for $d-3$. Then
\begin{align*}
\text{LHS} & = \frac{1}{4} \sum_{m=0}^{\left[\frac{d}{2}\right]} (d - 2 m + 1)^2 (m + 1)^2 (d - m + 2)^2  + \binom{d+5}{8}\\
& = \frac{(d + 2) (d + 4) (d^5 + 15 d^4 + 91 d^3 + 279 d^2 + 412 d + 210)}{1680} + \binom{d+5}{8}\\
& = \binom{d+8}{8}.
\end{align*}
We have the formula.
\end{proof}

\begin{lemma}\label{lemma:weight-vector}
Let $e_{i,j}$ $(i,j\in \{1,2,3\})$ be the matrix units of $\mat_3(\bC)$. Let 
\begin{align*}
N_1(s,t,u) & = se_{1,2} + ue_{1,3} + te_{2,3},\\
N_2(s,t,u) & = N_1(s,t,u) \otimes  I_3 + I_3 \otimes N_1(s,t,u), \\
N_3(s,t,u) & = N_1(s,t,u) \otimes I_9 + I_3 \otimes N_2(s,t,u) \\
& = N_1(s,t,u) \otimes I_3 \otimes I_3 + I_3 \otimes N_1(s,t,u) \otimes I_3 + I_3 \otimes  I_3 \otimes N_1(s,t,u), \\
D_1(s,t) & = s e_{1,1} + (-s+t) e_{2,2} -t e_{3,3},\\
D_2(s,t) & = D_1(s,t) \otimes I_3 + I_3 \otimes  D_1(s,t),\\
D_3(s,t) & = D_1(s,t) \otimes I_9 + I_3 \otimes D_2(s,t)\\
& = D_1(s,t) \otimes I_3 \otimes I_3 + I_3 \otimes D_1(s,t) \otimes I_3 + I_3 \otimes  I_3 \otimes D_1(s,t), \\
f & =  \sym^{(2)}(e_{1,2}\otimes e_{2,3} - e_{1,3}\otimes e_{2,2}) \text{ and} \\
g & = \sym^{(3)}(e_{1,1}\otimes e_{2,2}\otimes e_{3,3}) - \sym^{(3)}(e_{1,1}\otimes e_{2,3}\otimes e_{3,2}) \\
&- \sym^{(3)}(e_{1,2}\otimes e_{2,1}\otimes e_{3,3}) 
 + \sym^{(3)}(e_{1,2}\otimes e_{2,3}\otimes e_{3,1}) \\
&- \sym^{(3)}(e_{1,3}\otimes e_{2,2}\otimes e_{3,1}) + \sym^{(3)}(e_{1,3}\otimes e_{3,2}\otimes e_{2,1}).
\end{align*}
Then, the following hold.
\begin{itemize}
\item[{\rm (i)}] $N_1(s,t,u)e_{1,3} = e_{1,3}N_1(s',t',u') = O$ and $D_1(s,t)e_{1,3} = se_{1,3}$, $e_{1,3}D_1(-s',-t') = t'$.
\item[{\rm (ii)}] $N_2(s,t,u) f = f N_2(s',t',u') = O$ and $D_2(s,t)f = tf$, $fD_2(-s',-t') = s'f$.
\item[{\rm (iii)}] $N_3(s,t,u)g = gN_3(s',t',u') = O$ and $D_3(s,t)g = gD_3(-s',-t') = O$.
\end{itemize}
\end{lemma}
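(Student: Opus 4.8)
The plan is to verify all three parts by a direct computation with the $3\times3$ matrix units, relying only on the rule $e_{i,k}e_{k',l}=\delta_{k,k'}e_{i,l}$ and on one structural simplification: since each of $N_p(s,t,u)$ and $D_p(s,t)$ ($p=2,3$) is a sum of terms $I^{\otimes a}\otimes M\otimes I^{\otimes b}$, it is invariant under the $S_p$-action permuting the tensor factors, so left or right multiplication by it commutes with the symmetrization $\sym^{(p)}$; hence I would carry out every computation on the unsymmetrized monomials occurring in $e_{1,3}$, $f$, $g$ and apply $\sym^{(p)}$ only at the end. Part (i) is then read off immediately: $e_{1,2}e_{1,3}=e_{1,3}e_{1,3}=e_{2,3}e_{1,3}=O$ and $e_{1,3}e_{1,2}=e_{1,3}e_{1,3}=e_{1,3}e_{2,3}=O$ give the two vanishing statements, while $D_1(s,t)=\operatorname{diag}(s,-s+t,-t)$ acting on the left produces the eigenvalue $s$ (its $(1,1)$-entry) and $D_1(-s',-t')=\operatorname{diag}(-s',s'-t',t')$ acting on the right produces $t'$ (its $(3,3)$-entry).

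For (ii) I would first tabulate $N_1(s,t,u)$ on the four units appearing in $f$, namely $N_1e_{1,2}=N_1e_{1,3}=O$, $N_1e_{2,3}=s\,e_{1,3}$, $N_1e_{2,2}=s\,e_{1,2}$, together with the right-multiplication versions $e_{1,2}N_1(s',t',u')=t'e_{1,3}$, $e_{2,3}N_1(s',t',u')=e_{1,3}N_1(s',t',u')=O$, $e_{2,2}N_1(s',t',u')=t'e_{2,3}$. The Leibniz rule then gives $N_2(s,t,u)(e_{1,2}\otimes e_{2,3}-e_{1,3}\otimes e_{2,2})=s(e_{1,2}\otimes e_{1,3}-e_{1,3}\otimes e_{1,2})$, which $\sym^{(2)}$ annihilates, so $N_2(s,t,u)f=O$, and $fN_2(s',t',u')=O$ falls out the same way. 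For the Cartan part I would note that $D_1(s,t)$ scales both $e_{1,2}\otimes e_{2,3}$ and $e_{1,3}\otimes e_{2,2}$ by $s+(-s+t)=t$, so $D_2(s,t)f=tf$, and that $D_1(-s',-t')$ scales both by $(s'-t')+t'=s'$, so $fD_2(-s',-t')=s'f$.

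For (iii) I would first rewrite $g=\sum_{\sigma\in S_3}\operatorname{sgn}(\sigma)\,\sym^{(3)}(e_{1,\sigma(1)}\otimes e_{2,\sigma(2)}\otimes e_{3,\sigma(3)})$, recognizing via $\sym^{(3)}(e_{1,3}\otimes e_{3,2}\otimes e_{2,1})=\sym^{(3)}(e_{1,3}\otimes e_{2,1}\otimes e_{3,2})$ that the last listed monomial is the $\sigma=(1\,3\,2)$ term. Because the left action only raises row indices ($N_1e_{1,j}=O$, $N_1e_{2,j}=s\,e_{1,j}$, $N_1e_{3,j}=u\,e_{1,j}+t\,e_{2,j}$), applying $N_3(s,t,u)$ to a single $\sigma$-monomial yields monomials each of which has two tensor factors sharing a row index but carrying distinct column indices; I would then pair $\sigma$ with the transposition of the two positions bearing that repeated row index, a sign-reversing involution under which each such monomial is fixed after $\sym^{(3)}$, so the signed sum cancels and $N_3(s,t,u)g=O$; the statement $gN_3(s',t',u')=O$ follows by the mirror argument with columns in place of rows. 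Finally $D_3(s,t)g=O$ and $gD_3(-s',-t')=O$ hold because each monomial of $g$ involves every row (resp.\ column) exactly once, hence acquires the scalar $\operatorname{tr}D_1(s,t)=0$ (resp.\ $\operatorname{tr}D_1(-s',-t')=0$).

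I expect the only real obstacle to be the bookkeeping in (iii) --- keeping the six monomials of $g$, their signs, and the cancelling involution organized. A cleaner, though less self-contained, alternative is to identify $\mat_3(\bC)=\bC^3\otimes(\bC^3)^{\ast}$ and invoke Cauchy's decomposition $\sym^{(d)}(\mat_3(\bC))=\bigoplus_{\lambda}S_\lambda(\bC^3)\otimes S_\lambda((\bC^3)^{\ast})$, under which $e_{1,3}$, $f$, $g$ are precisely the evident highest weight vectors of $\bigwedge^1$, $\bigwedge^2$, $\bigwedge^3$ paired with their duals, so that (i)--(iii) merely record their weights; since the direct computation is short, I would nonetheless present it.
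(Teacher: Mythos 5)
Your proof is correct and matches the paper's approach: the paper's own proof is simply ``Straightforward,'' i.e.\ the direct computation with matrix units that you carry out in full (including the correct identification of the last monomial of $g$ as the $(1\,3\,2)$ term and the sign-reversing involution for part (iii)). Your computations also implicitly fix the paper's typo $e_{1,3}D_1(-s',-t')=t'$, which should read $t'e_{1,3}$.
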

\begin{proof}
Straightforward.
\end{proof}

\bigskip\noindent
\begin{proof}[{\it Proof of Theorem~\ref{thm:uchida}.}]

Let $R = \mat_3(\bC)$. We first consider $\mathrm{Sym}^{(d)}(\mat_3(\bC))$ as an $R$-bi-module with respect to the left action; 
\begin{align*}
\lefteqn{x \cdot \text{Sym}^{(d)}(X_1, X_2, \cdots, X_d)}\\
 &= \text{Sym}^{(d)}(x\cdot X_1, X_2, \ldots, X_d) +  \text{Sym}^{(d)}(X_1, x\cdot X_2, \ldots, X_d) \\
&+ \cdots +  \text{Sym}^{(d)}(X_1, X_2, \ldots, x\cdot X_d),
\end{align*}
and the right action;
\begin{align*}
\lefteqn{\text{Sym}^{(d)}(X_1, X_2, \cdots, X_d)\cdot y}\\
 &= \text{Sym}^{(d)}(X_1\cdot y, X_2, \ldots, X_d) +  \text{Sym}^{(d)}(X_1, X_2\cdot y, \ldots, X_d) \\
&+ \cdots +  \text{Sym}^{(d)}(X_1, X_2, \ldots, X_d\cdot y)
\end{align*}
for $x,y \in R$.

For $\ell$ ($0\leq \ell \leq \left[\frac{d}{3}\right]$), and $m$ ($0\leq m\leq \left[\frac{d-3\ell}{2}\right]$), let $h$ be the symmetrization of the following matrix in $\mat_{3d}(\bC)$
$$e_{1,3}^{\otimes d-3\ell-2m}\otimes f^{\otimes m} \otimes g^{\otimes \ell}, $$
where $e_{1,3}\in \mat_3(\bC)$, $f\in \mat_9(\bC)$ and $g\in \mat_{27}(\bC)$ are matrices defined in Lemma \ref{lemma:weight-vector}.

Then, by Lemma~\ref{lemma:weight-vector}, it is a highest weight vector with weight $((d-3\ell-2m, m),(m,d-3\ell-2m))$. Thus, $Rh$ is an irreducible $R$ left module with highest weight $(d-3\ell-2m, m)$, and $hR$  is an irreducible $R$ right module with highest weight $(m,d-3\ell-2m)$. Since the dimension of the both modules is 
$$d_{\ell,m} = \frac{1}{2}(d-3\ell-2m + 1)(m+1)(d-3\ell-m+2)$$
by Lemma~\ref{lem:deg}, the dimension of $RhR$ is isomorphic to the matrix algebra of size $d_{\ell,m}$ with dimension $\frac{1}{4}(d-3\ell-2m+1)^2(m+1)^2(d-3\ell-m+2)^2$.

Since 
$$\frac{1}{4}\sum_{\ell = 0}^{[d/3]}\sum_{m = 0}^{[(d-3\ell)/2]}(d-3\ell-2m+1)^2(m+1)^2(d-3\ell-m+2)^2 = \binom{d+8}{d}$$
by Lemma~\ref{lemma: formula}, and the right hand side is $\dim \mathrm{Sym}^{(d)}(\mat_3(\bC))$, we have the desired isomorphism. 
\end{proof}

Finally, we present some open problems:
\begin{prob}
\begin{enumerate}

\item Find the multiplicity formula of the standard module of $H^*(d,3)$.

\item Study the relationship between the Terwilliger algebra of a weakly distance-regular digraph $H^*(d,3)$ and the underlying distance-regular graph $H(d,3)$.
\item Study the structure of the irreducible modules of the Terwilliger algebra of a weakly distance-regular digraph of Hamming type determined in \cite{yqw}.

\item Determine the structure of the Terwilliger algebra of a weakly distance-regular digraph with respect to every vertex.
\end{enumerate}
\end{prob}
For the recent progress of Problem (3) and (4), see \cite{mnssw2,mnssw3}. 


\section*{Acknowledgements}
\noindent
The first named author is supported by JSPS KAKENHI (22K03277).




\begin{thebibliography}{99}

\bibitem{jeh} J.E.~Humphreys 
{\em Introduction to Lie Algebras and Representation Theory}, 
Graduate Text in Mathematics 9, 1972, 
Springer-Verlag, New York-Heidelberg-Berlin. 

\bibitem{lecturenote} Lecture Note on Terwilliger Algebra, a series of lectures in 1993 by P.~Terwilliger, edited by H.~Suzuki. URL: \url{https://icu-hsuzuki.github.io/t-algebra/}



\bibitem{lmp} F.~Levstein, C.~Maldonado and D.~Penazzi, 
The Terwilliger algebra of the Hamming scheme $H(d,q)$, 
{\sl Europ.~J.~Combin.} {\bf 27} (2006), 1--10.



\bibitem{mnssw2} 
T.~Miezaki, Y.~Nishimura, Y.~Sato, and H.~Suzuki, 
The Terwilliger algebra of digraphs II ---Hamming digraph $H^*(d,q)$---, 
in preparation. 

\bibitem{mnssw3} 
T.~Miezaki, Y.~Nishimura, Y.~Sato, H.~Suzuki, and Y.~Watanabe, 
The higher Terwilliger algebra of graphs, in preparation. 


\bibitem{yqw} Y.~Yang, Q.~Zeng, and K.~Wang, 
Weakly distance-regular digraphs 
whose underlying graphs are distance-regular, I, 	
{\sl J.~Algebraic Combin.} {\bf 59} (2024), no.~4, 827--847.



\end{thebibliography}
\end{document}